\tikzstyle{vert} = [circle,fill=black, minimum size=2mm, inner sep=0pt]
\tikzstyle{edge} = [thick]
\definecolor{col0}{HTML}{c42626}
\definecolor{col1}{HTML}{ffd700}
\definecolor{col2}{HTML}{ff00ba}
\definecolor{col3}{HTML}{00ff00}
\definecolor{col4}{HTML}{0000bb}
\definecolor{col5}{HTML}{1e90ff}
\definecolor{col6}{HTML}{7fffd4}
\definecolor{cola}{HTML}{ff0000}
\definecolor{colb}{HTML}{00ff00}
\definecolor{colc}{HTML}{0000ff}
\newsavebox\ideabox
\newenvironment{idea}
{\begin{equation}
		\begin{lrbox}{\ideabox}
			\begin{minipage}{\dimexpr\columnwidth-2\leftmargini}
				\setlength{\leftmargini}{0pt}%
				\begin{quote}}
				{\end{quote}
			\end{minipage}
		\end{lrbox}\makebox[0pt]{\usebox{\ideabox}}
\end{equation}}
\newtheorem{theorem}{Theorem}[section]
\newtheorem{lemma}[theorem]{Lemma}
\newtheorem{claim}[theorem]{Claim}
\newtheorem{corollary}[theorem]{Corollary}
\newtheorem{proposition}[theorem]{Proposition}
\newtheorem{conjecture}[theorem]{Conjecture}
\newtheorem{remark}[theorem]{Remark}
\newenvironment{poc}{\begin{proof}[Proof of Claim]}{\end{proof}}
\theoremstyle{remark}
\renewcommand{\leq}{\leqslant}
\renewcommand{\geq}{\geqslant}
\newcommand{\trw}{\mathsf{tw}}
\renewcommand{\Pr}[1]{\mathbb{P}\!\left(\,#1\,\right)}
\newcommand{\Ex}[1]{\mathbb{E} \left[\,#1\,\right]}
\newcommand{\eps}{\varepsilon}
\newcommand{\mlcr}{\textsc{Multi-layer cops and robber}\xspace}
\newcommand{\mlcralloc}{\textsc{Allocated multi-layer cops and robber}\xspace}
\newcommand{\mlcrany}{\textsc{Multi-layer cops and robber with free layer choice}\xspace}
\newcommand{\cop}[1]{\mathsf{c}\!\left(#1\right)} 
\newcommand{\mcop}[1]{\mathsf{mc}\!\left(#1\right)}
\newcommand{\maxcop}[2]{ \mathsf{emc}_{#1}\!\left(#2\right)}  
\newcommand{\flatten}[1]{\mathsf{fl}\!\left(#1\right)}  
\newclass{\EXPTIME}{EXPTIME}
\newcommand{\mlg}{\ensuremath{\mathcal{G}}}
\newcommand{\layers}{\ensuremath{\tau}}
\newcommand{\alloc}{\ensuremath{\mathbf{k}}}
\providecommand{\keywords}[1]{\textbf{\textit{Keywords---}} #1}
\providecommand{\codes}[1]{\textbf{\textit{AMS MSC 2010---}} #1}
\title{\textbf{Cops and Robbers on Multi-Layer Graphs}\thanks{An extended abstract of this paper appeared at WG 2023 \cite{EnrightMPS23}.}}
\author[1]{Jessica Enright}
\author[1]{Kitty Meeks}
\author[1]{William Pettersson}
\author[1,2]{John Sylvester}
\affil[1]{School of Computing Science, University of Glasgow, UK\\
	\texttt{\{jessica.enright,kitty.meeks,william.pettersson\}@glasgow.ac.uk}}
\affil[2]{Department of Computer Science, University of Liverpool, UK\\ \texttt{john.sylvester@liverpool.ac.uk}} 
\date{}
\begin{document}

	\maketitle
	\begin{abstract} We generalise the popular \textit{cops and robbers} game to multi-layer graphs, where each cop and the robber are restricted to a single layer (or set of edges). 
		We show that initial intuition about the best way to allocate cops to layers is not always correct, and prove that the multi-layer cop number is neither bounded from above nor below by any increasing function of the cop numbers of the individual layers. We determine that it is $\NP$-hard to decide if $k$ cops are sufficient to catch the robber, even if every cop layer is a tree and a set of isolated vertices. However, we give a polynomial time algorithm to determine if $k$ cops can win when the robber layer is a tree. Additionally, we investigate a question of worst-case divisions of a simple graph into layers: given a simple graph $G$, what is the maximum number of cops required to catch a robber 
		over all multi-layer graphs where each edge of $G$ is in at least one layer and all layers are connected?
		For cliques, suitably dense random graphs, and graphs of bounded treewidth, we determine this parameter up to multiplicative constants.
		Lastly we consider a multi-layer variant of Meyniel's conjecture, and show the existence of an infinite family of graphs whose multi-layer cop number is bounded from below by a constant times $n / \log n$, where $n$ is the number of vertices in the graph. 
	\end{abstract}
	\keywords{Cops and robbers, multi-layer graphs, pursuit–evasion games, Meyniel's conjecture}\\
	\codes{05C57,68R10}
	\section{Introduction}
	
	Cops and robbers is a 2-player adversarial game played on a graph introduced independently by Nowakowski and Winkler~\cite{NOWAKOWSKI1983235}, and Quilliot~\cite{quilliot1978jeux}. At the start of the game, the cop player chooses a starting vertex for each of a specified number of cops, and the robber player then selects a starting vertex for the robber.  
	Then in subsequent rounds, the cop player first chooses none, some, or all cops and moves them along exactly one edge to a new vertex.
	The robber player then either moves the robber along an edge or leaves the robber on its current vertex.
	The cop player wins if after some finite number of rounds a cop occupies the same vertex as the robber, and the robber wins otherwise.  Both players have perfect information about the graph and the locations of cops and robbers.  
	Initially, research focussed on games with only one cop and one robber, and graphs on which the cop could win were classed as \emph{copwin} graphs.
	Aigner and Fromme~\cite{AF84} introduced the idea of playing with multiple cops, and defined the \emph{cop number} of a graph as the minimum number of cops required for the cop player to win on that graph. Many variants of the game have been studied, and for an in-depth background on cops and robbers, we direct the reader to~\cite{Bonato2011Book}.
	
	In this paper, we play cops and robbers on multi-layer graphs where each cop and the robber will be associated with exactly one layer, and during their respective turns, will move only along edges that exist in their layer.  While we define multi-layer graphs formally in upcoming sections, roughly speaking here a multi-layer graph is a single set of vertices with each layer being a different (though possibly overlapping) set of edges on those vertices.
	The variants we study could intuitively be based on the premise that the cops are assigned different modes of transport.
	For instance, a cop in a car may be able to move quickly down streets, while a cop on foot may be slower down a street, but be able to quickly cut between streets by moving through buildings or down narrow alleys.
	
	Extending cops and robbers to multi-layer graphs creates some new variants, and generalises some existing ones.
	Fitzpatrick~\cite{FitzpatrickThesis} introduced the \emph{precinct} variant, which assigns to each cop a subset of the vertices (called their \emph{beat}), and each cop can never leave their beat. This can be modelled as multi-layer cops and robbers by restricting each layer to a given beat. Fitzpatrick~\cite{FitzpatrickThesis} mainly considers the case were a beat is an isometric path, we allow more arbitrary (though usually spanning and connected) beats/layers. Clarke~\cite{ClarkeThesis} studies the problem of covering a graph with a number of cop-win subgraphs to upper bound the cop number of a graph --- again such constructions can be modelled as multi-layer graphs with the edges of each layer forming a cop-win graph.
	Another commonly studied variant of cops and robbers defines a speed $s$ (which may be infinite) such that the robber can move along a path of up to $s$ edges on their turn~\cite[Section 3.2]{Bonato2017Book}.
	These can also be modelled as multi-layer graphs by adding edges between any pair of vertices of distance at most~$s$ that only belong to the layer the robber is occupying.
	
	\subsection{Further Related Work} \label{sec:further}
	Temporal graphs, in which edges are active only at certain time steps, are sometimes modelled as multi-layered graphs. There has been some work on cops and robbers on temporal graphs, though generally yielding quite a different game to the ones we consider here as a cop is not restricted to one layer.  In particular, \cite{Balev2020DynamicGraphs} considers cops and robbers on temporal graphs and when the full temporal graph is known they give a $\mathcal{O}(n^3T)$ algorithm to determine the outcome of the game where $T$ is the number of timesteps. See also \cite{CarufelFSS23}.
	
	Variants of cops and robbers are also studied for their relationships to other parameters of graphs. For instance, the cop number of a graph $G$ is at most one plus half the treewidth of $G$~\cite{JoretKT10}.
	And if one considers the ``helicopter'' variant of cops and robbers, the treewidth of a graph is strictly less than the helicopter cop number of the graph~\cite{SeymourThomas}.
	A recent pre-print of Toru\'{n}czyk~\cite{toruczyk2023flipwidth} generalises many graph parameters, including treewidth, clique-width, degeneracy, rank-width, and twin-width, through the use of variants of cops and robbers.
	We introduce our multi-layer variants of cops and robbers partially in the hopes of spurring research towards multi-layer graph parameters using similar techniques.
	
	Recently Lehner, resolving a conjecture by Schr\"oeder \cite{Schroeder}, showed the cop number of a toroidal graph is at most three \cite{Lehner21}. There is also an interesting connection between cop number and the genus of the host graph \cite{AF84,BowlerELP21,Quilliot85a,Schroeder}. It remains open whether any such connection can be made in the multi-layer setting.
	
	\subsection{Outline and Contributions}
	In Section \ref{sec:background-notation} we define multi-layer graphs and multi-layer cops and robbers. 
	
	In Section \ref{sec:counterexamples} we develop several examples which highlight several counter-intuitive facts and properties of the multi-layer cops and robbers game. In particular, we show the multi-layer cop number is not bounded from above or below by a non-trivial function of the cop numbers of the individual cop layers.

	In Section \ref{sec:hardness} we study the computational complexity of some multi-layer cops and robbers problems. We show that deciding if a given number of cops can catch a robber is $\NP$-hard even if every cop layer is a tree and possibly some isolated vertices. On the contrary we show that if only the robber layer is required to be a tree, then the problem is $\FPT$ in the number of cops and the number of layers of the graph. 
	
	In Section \ref{sec:upper-bounds} we consider an extremal version of multi-layer cop number over all divisions into layers of a single-layer graph. In particular, for a given single-layer graph $G$ we consider the maximum multi-layer cop number of any multi-layer graph $\mlg$ when all edges of $G$ are present in at least one layer of $\mlg$. We determine this variant of the cop number upto constants for cliques and suitably dense random graphs, and provide a bound on it by the treewidth of $G$.

	In Section~\ref{sec:men} we consider Meyniel's conjecture, which states that the single-layer cop number of any connected $n$-vertex graph is $\mathcal{O}(\sqrt{n})$ and is a central open question in cops and robbers. We investigate whether a multi-layer analogue of Meyniel's conjecture can hold and give a construction with only two cop layers which requires $\Omega\left(n/ \log n\right)$ cops to police.  This determines the worst case multi-layer cop number up to a multiplicative $\mathcal{O}(\log n)$ factor. This contrasts with the situation on simple graphs, where the worst-case is only known up to a multiplicative $n^{1/2-o(1)}$ factor.
	
	Finally, in Section \ref{sec:conclusion} we reflect and conclude with some open problems. 
	
	\section{Definitions and Notation}\label{sec:background-notation}
	
	We write $[n]$ to mean the set of integers $\{1,\ldots, n\}$.
	\paragraph{Graphs.} Given a set $V$ we write $\binom{V}{2}$ to mean all possible 2-element subsets (i.e., edges) of $V$.
	A simple graph is then defined as $G:=(V,E)$ where $E \subseteq \binom{V}{2}$.
	For a vertex $v\in V$ we let $d_{G}(v) := \left|\{u : uv\in E  \}\right|$ be the degree of vertex $v$ in $G$, and $\delta(G) := \min_{v\in V(G)} d_G(v)$ denote the minimum degree in a graph $G$.
	If, for all $v\in V$, $d_{G}(v) = r$ for some integer $r$, we say that $G$ is $r$-regular. If the exact value of $r$ is not important, we may just say that $G$ is regular. If, instead, for all $v\in V$, $d_{G}(v) \in \{r,r+1\}$ for some integer $r$, we say that $G$ is almost-regular.
	The \emph{distance} between two vertices $u$ and $v$ in a graph is the length of a shortest path between $u$ and $v$.
	
	\paragraph{Multi-Layer Graphs.} A multi-layer graph $(V,\{E_1, \dots, E_\layers\})$ consists of a vertex set $V$ and a collection $\{E_1, \dots, E_\layers\}$, for some integer $\layers\geq 1$, of edge sets (or \textit{layers}), where for each $i$, $E_i\subseteq \binom{V}{2}$.
	We often slightly abuse terminology and refer to a layer $E_i$ as a graph; when we do this, we specifically refer to the graph $(V, E_i)$, i.e., we always include every vertex in the original multi-layer graph, even if such a vertex is isolated in $(V, E_i)$.
	For instance, we often restrict ourselves to multi-layer graphs where, for each $i\in[\layers]$, the simple graph $(V, E_i)$ is connected. In this instance we say that each layer is \emph{connected}.
	Given a multi-layer graph $(V,\{E_1, \dots E_\layers\})$ let the \emph{flattened} version of a multi-layer graph, written as $\flatten{\mlg}$, be the simple graph $G=(V, E_1 \cup \cdots\cup E_\layers)$.
	
	\paragraph{Cops and Robbers on Graphs.} Cops and robbers is typically played on a simple graph, with one player controlling some number of cops and the other player controlling the robber.
	On each turn, the cop player can move none, some, or all of the cops, however each cop can only move along a single edge incident to their current vertex.
	The robber player can then choose to move the robber along one edge, or have the robber stay still.
	The goal for the cop player is to end their turn with the robber on the same vertex as at least one cop, while the aim for the robber is to avoid capture indefinitely.
	If a cop player has a winning strategy on a graph $G$ with $k$ cops but not with $k-1$ cops, we say that the graph $G$ has cop number $k$, denoted $\cop{G} = k$, and that $G$ is $k$-copwin.
	Given a multi-layer graph $(V, \{E_1, \ldots, E_\layers\})$, we will say the cop number of layer $E_i$ to mean the cop number of the graph $(V, E_i)$.
	
	As this paper deals with both simple and multi-layer graphs, as well as cops and robbers variants played on these graphs, we will use \emph{single-layer} as an adjective to denote when we are referring to either specifically a simple graph, or to cops and robbers played on a single-layer (i.e.,~simple) graph.
	This extends to parameters such as the cop number as well.
	
	\paragraph{Cops and Robbers on Graphs on Multi-Layer Graphs.} In this paper we consider the cops and robbers game on multi-layer graphs and so it will be convenient to define multi-layer graphs with a distinguished layer for the robber. More formally, for an integer $\layers \geq 1$, we use the notation $\mlg= (V,\{C_1, \dots, C_\layers\},R)$ to denote a multi-layer graph with vertex set $V$ and collection $\{C_1, \dots, C_\layers,R \}$ of layers, where $\{C_1,\dots , C_\layers\}$ are the cop layers and $R$ is the robber layer. In the cops and robber game on $\mlg$ each cop is allocated to a single-layer from $\{C_1,\dots, C_\layers\} $, and the robber to $R$, and each cop (and the robber) will then only move along edges in their respective layer. We do not allow any cop or the robber to move between layers. This is a slight abuse of notation, and that both $(V,\{C_1,\dots, C_\layers,R\}) $ and $(V,\{C_1,\dots, C_\layers\},R)$ both denote a multi-layer graphs with the same collection $ \{C_1,\dots, C_\layers\}\cup \{R\}  $ of edge sets, the latter has designated layers for the robber/cops whereas the former does not. We will use $E_i$ to denote edge sets in multi-layer graphs that do not have a cop or robber labels.
	
	A setting that appears frequently is $R= C_1\cup \cdots \cup C_\layers$, where the robber can use any edge that exists in a cop layer. This setting is given by the multi-layer graph $\mlg := (V, \{C_1, \ldots, C_\layers\}, C_1 \cup \cdots \cup C_\layers)$, but
	for readability we will instead use $\mlg := (V, \{C_1, \ldots, C_\layers\}, *)$ to denote this. 
	
	We define several variants of cops and robbers on multi-layer graphs, however in each of them we have an \textit{allocation} $\alloc := (k_1,\ldots, k_\layers)$ of cops to layers, such that there are $k_i$ cops on layer $C_i$.
	We will often use $k := \sum_i k_i$ to refer to the total number of cops in a game.
	
	We now define multi-layer cops and robbers: a two player game played with an allocation $\alloc$ on a multi-layer graph $\mlg= (V,\{C_1, \dots, C_\layers\},R) $. The two players are the cop player and the robber player.
	Each cop is assigned a layer such that there are exactly $k_i$ cops in layer $C_i$.
	The game begins with the cop player assigning each cop to some vertex, and then the robber player assigns the robber to some vertex.
	The game then continues with each player taking turns in sequence, beginning with the cop player.
	On the cop player's turn, the cop player may move each cop along one edge in that cop's layer.
	The cop player is allowed to move none, some, or all of the cops.
	The robber player then takes their turn, either moving the robber along one edge in the robber layer or letting the robber stay on its current vertex.
	This game ends as a victory for the cop player if, at any point during the game, the robber is on a vertex that is also occupied by one or more cops.
	The robber wins if they can evade capture indefinitely.
	
	\paragraph{Formal Problem Statements.}  We can now begin defining our problems, starting with \mlcralloc.
	
	\begin{framed}
		\noindent
		\mlcralloc \\
		\textit{Input:} A tuple $(\mlg, \alloc)$ where $\mlg=(V, \{C_1,\ldots,C_\layers\}, R)$ is a multi-layer graph and $\alloc$ is an allocation of cops to layers. \\
		\noindent\textit{Question:} Does the cop player have a winning strategy when playing multi-layer cops and robbers on $\mlg$ with allocation $\alloc$?
	\end{framed}
	
	We also consider a variant in which the cop player has a given number $k$ of cops, but gets to choose the layers to which the cops are allocated.
	
	\begin{framed}
		\noindent
		\mlcr \\
		\textit{Input:} A tuple $(\mlg, k)$ where $\mlg=(V, \{C_1,\ldots,C_\layers\}, R)$ is a multi-layer graph and $k\geq 1$ is an integer. \\
		\noindent\textit{Question:} Is there an allocation $\alloc$ with $\sum_i k_i = k$ such that $(\mlg, \alloc)$ is yes-instance for \mlcralloc?
	\end{framed}

	We say that the multi-layer cop number of a multi-layer graph $\mlg$ is $k$ if $(\mlg, k)$ is a yes-instance for \mlcr but $(\mlg, k-1)$ is a no-instance for \mlcr.
	We will denote this with $\mcop{\mlg}$.

	Lastly we consider \mlcrany, a variant of \mlcr in which, before the game is played, the layers in the multi-layer graph are not assigned to being either cop layers or robber layers.
	Instead, the layers are simply labelled $E_1$ through $E_\layers$, and 
	in this variant the cop player first allocates each cop to one layer, and then the robber player is free to allocate the robber to any layer. 
	
	\begin{framed}
		\noindent
		\mlcrany \\
		\textit{Input:} A tuple $(\mlg, k)$ where $\mlg=(V,\{E_1, \dots, E_\layers\} )$ is a multi-layer graph and $k\geq 1$ is an integer. \\
		\noindent\textit{Question:} Is there an allocation $\alloc$ with $\sum_i k_i = k$ such that for every $j$, $((V, \{E_1, \ldots, E_\layers\}, E_j), \alloc)$ is a yes-instance for \mlcr?
	\end{framed}

	\paragraph{Basic Relations.} We round out this section with a number of basic observations. 
	
	\begin{proposition}\label{prop:subsetrobber}
		Let $\mlg=(V,\{C_1, \dots, C_\layers\} , R)$ and $\mlg'=(V,\{C_1, \dots, C_\layers\} ,R')$ be any two multi-layer graphs where $R\subseteq R'\subseteq \binom{V}{2}$.
		If $(\mlg, k)$ is a no-instance to \mlcr, then $(\mlg', k)$ is a no-instance to \mlcr. Consequently, $\mcop{\mlg} \leq \mcop{\mlg'}$.
	\end{proposition}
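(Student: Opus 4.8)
The plan is a strategy\-/stealing argument: enlarging the robber's edge set can only help the robber, since the cop layers---and hence the set of legal cop configurations and cop moves---are identical in $\mlg$ and $\mlg'$.

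First I would fix an arbitrary allocation $\alloc=(k_1,\dots,k_\layers)$ with $\sum_i k_i=k$. Because $(\mlg,k)$ is a no-instance to \mlcr, the robber has a winning strategy $\sigma$ in the game played on $\mlg$ with allocation $\alloc$; that is, starting from any cop placement, $\sigma$ specifies an initial robber vertex and subsequent robber moves (along edges of $R$) that avoid capture forever against every cop strategy. I would then transfer $\sigma$ verbatim to the game on $\mlg'$ with the same allocation $\alloc$. Every move prescribed by $\sigma$ uses an edge of $R$, and by hypothesis $R\subseteq R'$, so each such move is also legal on $\mlg'$; hence $\sigma$ is a well-defined robber strategy there.

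Next I would check that $\sigma$ remains winning on $\mlg'$. The two games share the same vertex set $V$ and the same cop layers $C_1,\dots,C_\layers$, so a cop configuration together with a legal cop move in the game on $\mlg'$ is precisely a cop configuration together with a legal cop move in the game on $\mlg$. Consequently every cop strategy $\pi'$ against the robber on $\mlg'$ is verbatim a cop strategy $\pi$ against the robber on $\mlg$. Now run both games with identical initial placements, letting the cop player play $\pi$ (resp.\ $\pi'$) and the robber player play $\sigma$ in both: the two resulting sequences of (cop positions, robber position) coincide step by step, by induction on the round. Since $\sigma$ survives forever against $\pi$ on $\mlg$, the robber is never captured in this common play, so $\sigma$ survives forever against $\pi'$ on $\mlg'$. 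As $\pi'$ was arbitrary, the robber wins $(\mlg',\alloc)$; and as $\alloc$ was an arbitrary allocation summing to $k$, $(\mlg',k)$ is a no-instance to \mlcr.

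For the final inequality I would take the contrapositive just established---$(\mlg',k)$ a yes-instance implies $(\mlg,k)$ a yes-instance---and apply it with $k=\mcop{\mlg'}$, using the trivial monotonicity that an extra idle cop turns a yes-instance with $k$ cops into a yes-instance with $k+1$ cops, to conclude $\mcop{\mlg}\le\mcop{\mlg'}$. I do not expect a genuine obstacle; the only point requiring care is the bookkeeping that makes ``the same moves produce the same play'' precise---in particular the facts that a winning robber strategy wins against \emph{all} cop strategies and that the cop player's strategy space is literally unchanged---which is exactly what makes the transfer of $\sigma$ valid.
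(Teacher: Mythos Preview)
Your proof is correct and follows exactly the same strategy-stealing idea as the paper: transfer the robber's winning strategy from $\mlg$ to $\mlg'$, observing that every robber move remains legal because $R\subseteq R'$ while the cop moves are unchanged. Your write-up is simply more careful than the paper's three-sentence version in making the quantification over allocations explicit and in spelling out why the plays coincide step by step.
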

	\begin{proof}
		To win, the robber on $\mlg'$ uses the strategy from $\mlg$.
		The robber can execute this strategy as any edge in $R'$ is in $R$.
		Since the cop layers have no added edges, the strategy must be robber-win as else the cops would win on $\mlg$.
	\end{proof}
	
	\begin{proposition}\label{prop:subsetcops}
		Let $\mlg=(V,\{C_1, \dots, C_\layers\} , R)$ and $\mlg'=(V,\{C'_1, \dots,  C'_\layers\} ,R)$ be any two multi-layer graphs that satisfy $C_i\subseteq C_i'$ for every $i\in[\layers]$. If $(\mlg, k)$ is a yes-instance to \mlcr, then $(\mlg', k)$ is also a yes-instance to \mlcr. 
	\end{proposition}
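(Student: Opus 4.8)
The plan is to show that any winning cop strategy on $\mlg$ transfers verbatim to $\mlg'$, using the same cop allocation. First I would fix an allocation $\alloc=(k_1,\dots,k_\layers)$ with $\sum_i k_i = k$ witnessing that $(\mlg,\alloc)$ is a yes-instance of \mlcralloc, and adopt the \emph{same} allocation on $\mlg'$; this is legitimate since the two multi-layer graphs have the same number of layers. The cop player on $\mlg'$ then simply plays according to their winning strategy from $\mlg$.

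The key observation has two parts. First, the robber layer $R$ is identical in the two graphs, so the set of sequences of robber positions that can arise in a play on $\mlg'$ is exactly the set that can arise on $\mlg$; hence the cop strategy, viewed as a function from game histories to cop moves, is defined on every position reachable in $\mlg'$. Second, whenever that strategy instructs a cop allocated to layer $i$ to traverse an edge $e\in C_i$, this edge also lies in $C_i'$ because $C_i\subseteq C_i'$, so the move is legal in $\mlg'$. Consequently every play of the game on $\mlg'$ in which the cops follow this strategy is mirrored, step for step, by a legal play on $\mlg$ with the same cop and robber positions at every round.

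Since the original strategy is winning on $\mlg$, in every such play the robber is caught after finitely many rounds; by the position-for-position correspondence the robber is caught in the corresponding play on $\mlg'$ as well. Hence $(\mlg',\alloc)$ is a yes-instance of \mlcralloc with the same total number $k$ of cops, so $(\mlg',k)$ is a yes-instance of \mlcr. I do not anticipate a genuine obstacle here; the only point needing a little care is the bookkeeping that the robber's move options — and therefore the domain of the cop strategy — are unchanged because $R$ is the same in both graphs. Once that is noted, the monotonicity is intuitive: enlarging the cop layers can only add options for the cops and never helps the robber.
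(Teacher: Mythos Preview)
Your proposal is correct and follows essentially the same approach as the paper: the cops on $\mlg'$ reuse their winning strategy from $\mlg$, which remains legal since no cop edges were removed and the robber layer is unchanged. The paper's proof is a terse two-sentence version of exactly this argument, whereas you have spelled out the bookkeeping (same allocation, robber histories coincide, cop moves remain legal) more carefully.
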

	\begin{proof}
		To win, the cops on $\mlg'$ use the strategy from $\mlg$.
		As no edge has been removed from $\mlg$ to create $\mlg'$, this must still result in the cops winning.
	\end{proof}
	
	\begin{proposition}\label{prop:global-to-free-layer}
		Let $\mlg = (V, \{C_1, \ldots, C_\layers\}, *)$ be a multi-layer graph. If $(\mlg, k)$ is a yes-instance for \mlcr, then, letting $E_i = C_i$ for each $i\in[\layers]$, $((V, \{E_1,\ldots, E_\layers\}), k)$ is a yes-instance for \mlcrany.
	\end{proposition}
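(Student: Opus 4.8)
The plan is to observe that a winning cop allocation for the global-robber game on $\mlg$ remains winning in the free-layer game, once we note that whatever single layer the robber commits to is a subset of the edge set $C_1 \cup \cdots \cup C_\layers$ that the global robber was already free to use. The one point requiring care is the quantifier order in \mlcrany: the cop player must commit to a single allocation \emph{before} the robber picks a layer, so I need to extract one allocation that handles all of the robber's choices simultaneously.

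First I would unpack the hypothesis. Since $(\mlg, k)$ is a yes-instance for \mlcr, there is an allocation $\alloc = (k_1, \ldots, k_\layers)$ with $\sum_i k_i = k$ such that $(\mlg, \alloc) = \big((V, \{C_1, \ldots, C_\layers\}, C_1 \cup \cdots \cup C_\layers), \alloc\big)$ is a yes-instance for \mlcralloc; fix such an $\alloc$ together with a winning cop strategy. I claim this very allocation $\alloc$ witnesses that $\big((V, \{E_1, \ldots, E_\layers\}), k\big)$ is a yes-instance for \mlcrany. For this it suffices to fix an arbitrary $j \in [\layers]$ and show that $\big((V, \{E_1, \ldots, E_\layers\}, E_j), \alloc\big)$ is a yes-instance for \mlcralloc. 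Substituting $E_i = C_i$, this instance is literally $\big((V, \{C_1, \ldots, C_\layers\}, C_j), \alloc\big)$: the cop layers and the allocation are unchanged from the global-robber game, and only the robber's layer has shrunk from $C_1 \cup \cdots \cup C_\layers$ to $C_j$, which is a subset of it.

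It then remains to argue that shrinking the robber's layer cannot rescue the robber. This is the content of the argument in the proof of Proposition~\ref{prop:subsetrobber}, read at the level of a fixed allocation rather than a fixed cop budget: with the cop layers and the allocation held fixed, any robber strategy on $\big((V, \{C_1, \ldots, C_\layers\}, C_j), \alloc\big)$ is also a legal robber strategy on $(\mlg, \alloc)$ (each robber move uses an edge of $C_j \subseteq C_1 \cup \cdots \cup C_\layers$), so the fixed winning cop strategy, which catches the robber against every robber strategy in $(\mlg, \alloc)$, in particular catches the robber here. Hence $\big((V, \{C_1, \ldots, C_\layers\}, C_j), \alloc\big)$ is a yes-instance for \mlcralloc for every $j$, and since $\alloc$ did not depend on $j$, the claim follows. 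The main (and essentially only) obstacle is the bookkeeping just described: one must resist applying Proposition~\ref{prop:subsetrobber} directly at the \mlcr level, since that would only yield a possibly different winning allocation for each robber layer $C_j$, which is too weak to establish a yes-instance for \mlcrany.
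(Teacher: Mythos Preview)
Your argument is correct and is essentially the same approach the paper takes: the paper's proof is a one-liner invoking the problem definitions and Proposition~\ref{prop:subsetrobber}. Your care about fixing a single allocation $\alloc$ before ranging over $j$ is well-placed given that Proposition~\ref{prop:subsetrobber} is stated at the \mlcr (budget) level rather than the \mlcralloc level, but the underlying subset argument you extract from its proof is exactly what the paper is implicitly appealing to.
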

	\begin{proof}
		This immediately follows from the problem definitions and Proposition~\ref{prop:subsetrobber}.
	\end{proof}
	
	\section{Counter Examples \& Anti-Monotonicity Results}\label{sec:counterexamples}
	
	In this section we provide some concrete examples of cops and robbers on multi-layer graphs illustrating some peculiarities of the game that may seem counter-intuitive.
	In particular, we show that it is sometimes beneficial to put multiple cops on the same layer, and leave other layers empty.
	We then show that there is no lower bound on the cop number of a multi-layer graph that is a function only of the cop numbers of the individual cop layers.
	In the other direction, we also show that there is no upper bound on the cop number of a multi-layer graph that is a function of only the cop numbers of the layers.

	\subsection{Two-Layer Grid}\label{sec:two-layer-grid}
	
	In this section we give a multi-layer graph with two connected cop layers and one robber layer,
	where there is an automorphism of the graph that swaps the two cop layers.
	With such symmetric cop layers, an intuitive approach may be to assign one cop to each of the two cop layers.
	However, the follow theorem shows that two cops allocated in such a manner cannot catch the robber, while if both cops are assigned to the same layer,
	they can catch the robber.

	\begin{restatable}{theorem}{gridexist}\label{thm:gridexist}
		For any $n\geq 4$ there exists a multi-layer graph $(V, \{C_H, C_V\}, *)$ on $n$ vertices such that a cop player can win with two cops if both cops are on $C_H$, or if both cops are on $C_V$, but the robber player can win if one cop is on $C_V$ and the other is on $C_H$.
	\end{restatable}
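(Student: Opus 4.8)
The plan is to construct $\mlg = (V, \{C_H, C_V\}, *)$ explicitly from a suitably thickened path/grid-like structure and then analyse the two allocations separately. Concretely, I would take a $2 \times m$ grid-like skeleton on vertex set $V = \{a_1, \dots, a_m, b_1, \dots, b_m\}$ (so $n = 2m$, handling the parity of $n$ by padding), and let $C_H$ consist of the two ``horizontal'' paths $a_1 a_2 \cdots a_m$ and $b_1 b_2 \cdots b_m$ together with a single rung (say $a_1 b_1$) to keep the layer connected, while $C_V$ consists of all the ``vertical'' rungs $a_i b_i$ together with a single horizontal edge (say $a_1 a_2$, or a path along the $a$-side) to keep that layer connected. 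The graph should be designed so that swapping the roles of ``horizontal'' and ``vertical'' — i.e. an automorphism sending $a_i \mapsto$ (some relabelling) — exchanges $C_H$ and $C_V$; this forces me to choose the two layers to be genuinely isomorphic as labelled-up-to-automorphism subgraphs, which is the main design constraint. I would verify connectivity of each cop layer directly from the construction.

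Next I would prove the positive direction: two cops both on $C_H$ (equivalently, by the automorphism, both on $C_V$) win. Here I would use the standard fact that a path is copwin, and more specifically that in a grid-like graph one cop can ``guard'' a column or a horizontal line. With both cops on the horizontal layer, the idea is that one cop shadows the robber's horizontal coordinate on the $a$-path and the other on the $b$-path (or one cop clears a vertical cut and the other sweeps), steadily shrinking the interval of the robber's possible positions until capture; I would make this precise by describing a potential/monovariant argument (the robber is confined to an ever-smaller set of columns). The robber layer is $* = C_H \cup C_V$, so the robber can move both horizontally and vertically, but with two cops controlling the horizontal structure the robber cannot escape past them.

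Then the negative direction — the split allocation $(1,1)$ loses — which I expect to be the main obstacle, since I must exhibit an explicit robber strategy that evades both cops forever. The cop on $C_V$ can only move along rungs (plus one stray horizontal edge), so it is essentially confined to a single column $a_i b_i$ forever (once it leaves the special rung it can never change column); the cop on $C_H$ can move horizontally but is stuck on whichever of the two horizontal paths it starts on, apart from the one rung. The robber, free to move anywhere in the flattened graph, should keep to a column far from the $C_V$-cop's column and on the horizontal path not currently threatened by the $C_H$-cop, flipping sides via a rung whenever the $C_H$-cop approaches and the $C_V$-cop is far. The heart of the argument is a case analysis / invariant showing that at the start of every robber turn there is always a safe move: the $C_V$-cop threatens only one column, the $C_H$-cop threatens only vertices at distance $\leq 1$ on one horizontal line, and for $m$ large enough (hence $n \geq 4$, with the bound possibly needing the construction tuned so $n=4$ already works, e.g. using a small cycle-like gadget rather than a long grid) the robber can always retreat to a vertex avoiding both threats. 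I would formalise this with a ``shadow'' argument: maintain that the robber sits on a vertex whose column differs from the $C_V$-cop's and whose horizontal line differs from the $C_H$-cop's, and check that each cop move destroys at most one of these two freedoms, leaving the robber a legal response.

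Finally I would remark that the same $\mlg$ with the designated robber layer $*$ gives, via \Cref{prop:global-to-free-layer}, statements about the free-layer-choice variant if desired, and note that the construction shows $\mcop{\mlg}$ can be strictly smaller than the naive ``one cop per layer'' bound would suggest, which is the conceptual point the theorem is making.
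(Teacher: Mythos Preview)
Your proposal has a genuine structural gap that you flag yourself but do not resolve. You say the construction must admit an automorphism swapping $C_H$ and $C_V$ and call this ``the main design constraint'', yet the $2\times m$ skeleton you describe cannot satisfy it: $C_H$ (two long horizontal paths joined by one rung) is a Hamiltonian path on $2m$ vertices, while your $C_V$ (all rungs plus some horizontal edges for connectivity) is either disconnected (if you add only the single edge $a_1a_2$) or a caterpillar (if you add the whole $a$-path). These are not isomorphic, so you cannot deduce ``two cops on $C_V$ win'' from ``two cops on $C_H$ win'' by symmetry, and you offer no separate argument for the $C_V$ case.

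Worse, the tension propagates into your robber strategy. Your evasion argument for the split allocation rests on the claim that the $C_V$-cop ``is essentially confined to a single column forever''. But this is exactly what you give up the moment you make $C_V$ connected: once $C_V$ contains a horizontal path, the $C_V$-cop can change columns freely, and the invariant ``robber's column differs from the $C_V$-cop's column'' is no longer maintainable by a single robber move. So the negative direction, which you correctly identify as the heart of the proof, does not go through as written.

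The paper resolves both issues simultaneously by working on a \emph{square} $n\times n$ grid rather than a $2\times m$ strip. Each of $C_H$ and $C_V$ is a zigzag Hamiltonian path (all horizontal edges plus alternating boundary verticals for $C_H$, and the transpose for $C_V$), so the diagonal reflection is an honest automorphism exchanging the two layers. The robber strategy is then surprisingly local: the robber never leaves the corner square $\{(1,1),(1,2),(2,1),(2,2)\}$, choosing its row to avoid $c_H$'s row and its column to avoid $c_V$'s column. The point is that for $c_H$ to change rows it must travel all the way to a boundary column (distance $\geq n-2$), and symmetrically for $c_V$; this long detour, not confinement to a single column, is what gives the robber time to adjust. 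Your $2\times m$ strip has no room for such detours, which is why the construction needs the full square.
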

	
	This theorem follows directly from  Lemmas~\ref{lemma:grid-cop-win} and \ref{lemma:grid-robber-win}, however first we must give our construction.

	Let $n$ be a positive integer, and let $\mathcal{G}^{2}_n := (V, \{C_H, C_V\}, R)$ be a multi-layer graph on vertices $V:=\{(i,j)\mid i,j\in \{1,\ldots,n\}\}$ 
	where $C_H$ contains edges 
	\begin{align*}
		&\{(i,j)(i,j+1) \mid i\in\{1,\ldots,n\}, j\in\{1,\ldots,n-1\}\} \\ 
		\cup &\{(i,1)(i+1,1) \mid i \in \{1,\ldots,n-1\} \wedge i \equiv 0 \mod 2 \} \\
		\cup &\{(i,n)(i+1,n) \mid i \in \{1,\ldots,n-1\} \wedge i \equiv 1 \mod 2 \},
	\end{align*}
	$C_V$ contains edges
	\begin{align*}
		&\{(i,j)(i+1,j) \mid j\in\{1,\ldots,n\}, i\in\{1,\ldots,n-1\}\} \\
		\cup &\{(1,j)(1,j+1) \mid j \in \{1,\ldots,n-1\} \wedge j \equiv 0 \mod 2 \} \\
		\cup &\{(n,j)(n,j+1) \mid j \in \{1,\ldots,n-1\} \wedge j \equiv 1 \mod 2 \},
	\end{align*}
	and $R$ contains all edges in either $C_V$ or $C_H$.
	The $H$ in $C_H$ (respectively $V$ in $C_V$) refers to the layer containing all horizontal (respectively vertical) edges, where vertex $(i,j)$ is  interpreted as the $i$th row and $j$th column.
	Some edges around the sides of the grid are contained in both $C_H$ and $C_V$ to allow transitions between rows (or columns), hence $C_H$ contains all horizontal edges and also some vertical edges (and vice-versa for $C_V$).
	See Figure~\ref{fig:grid_graph} for a diagram.

	\begin{figure}[!t]
		\centering
		\begin{tikzpicture}[xscale=.9,yscale=.9]
			\foreach \i in {0,...,5} {
				\foreach \j in {0,...,5} {
					\node[fill,circle,minimum size=.02cm] (\i\j) at (\i,\j) {}; 
				}
			}
			\foreach \i [count=\xi from 2] in {1,...,4} {
				\foreach \j [count=\ji from 1] in {0,...,4} {
					\draw[edge,red,line width=4pt] (\i\j) -- (\i\ji);
					\draw[edge,blue,line width=4pt] (\j\i) -- (\ji\i);
				}
			}
			\node  at (-0.5, -0.5) {$(6,1)$};
			\node at (-0.5, 5.5) {$(1,1)$};
			\node at (5.5, -0.5) {$(6,6)$};
			\node at (5.5, 5.5) {$(1,6)$};
			\draw[edge,red,line width=4pt] (00) -- (01);
			\draw[edge,red,line width=4pt] (02) -- (03);
			\draw[edge,red,line width=4pt] (04) -- (05);
			\draw[edge,red,line width=4pt] (51) -- (52);
			\draw[edge,red,line width=4pt] (53) -- (54);
			
			\draw[edge,blue,line width=4pt] (10) -- (20);
			\draw[edge,blue,line width=4pt] (30) -- (40);
			\draw[edge,blue,line width=4pt] (05) -- (15);
			\draw[edge,blue,line width=4pt] (25) -- (35);
			\draw[edge,blue,line width=4pt] (45) -- (55);
			
			\draw[edge,blue,line width=4pt] (10) -- (00);
			\draw[edge,red,dashed,line width=4pt] (10) -- (00);
			\draw[edge,blue,line width=4pt] (20) -- (30);
			\draw[edge,red,dashed,line width=4pt] (20) -- (30);
			\draw[edge,blue,line width=4pt] (40) -- (50);
			\draw[edge,red,dashed,line width=4pt] (40) -- (50);
			
			\draw[edge,blue,line width=4pt] (01) -- (02);
			\draw[edge,red,dashed,line width=4pt] (01) -- (02);
			\draw[edge,blue,line width=4pt] (03) -- (04);
			\draw[edge,red,dashed,line width=4pt] (03) -- (04);
			
			\draw[edge,blue,line width=4pt] (15) -- (25);
			\draw[edge,red,dashed,line width=4pt] (15) -- (25);
			\draw[edge,blue,line width=4pt] (35) -- (45);
			\draw[edge,red,dashed,line width=4pt] (35) -- (45);
			
			\draw[edge,blue,line width=4pt] (50) -- (51);
			\draw[edge,red,dashed,line width=4pt] (50) -- (51);
			\draw[edge,blue,line width=4pt] (52) -- (53);
			\draw[edge,red,dashed,line width=4pt] (52) -- (53);
			\draw[edge,blue,line width=4pt] (54) -- (55);
			\draw[edge,red,dashed,line width=4pt] (54) -- (55);
		\end{tikzpicture}
		\caption{An example of $\mathcal{G}^2_6$ with the edges of $C_V$ and $C_H$ shown in red and blue respectively. The edges dashed in both colours belong to both $C_V$ and $C_H$.}
		\label{fig:grid_graph}
	\end{figure}
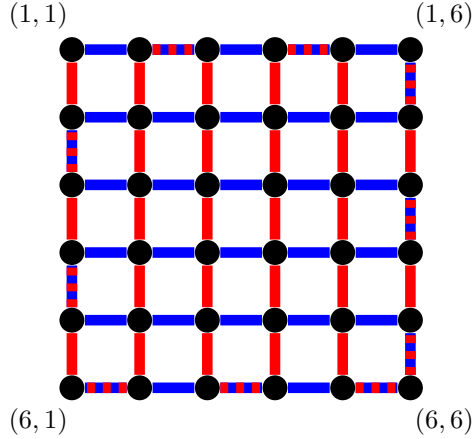
	
	We now consider the game of $\mlcralloc$ with two cops where both of the cops are on the same layer.
	As each individual row or column forms a geodesic path, the cop player can win using a path-guarding strategy on these geodesics: we now give a complete proof of this.
	
	\begin{lemma}\label{lemma:grid-cop-win}
		On $\mathcal{G}^2_n$, a cop player can win with two cops if both cops are on $C_H$ or both cops are on $C_V$.
	\end{lemma}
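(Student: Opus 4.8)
The plan is to combine the classical technique of guarding a geodesic path, due to Aigner and Fromme, with a two-cop ``leapfrog''. The first ingredient is the following: a single cop restricted to $C_H$ can \emph{guard} any row of $\mathcal{G}^2_n$, meaning that after finitely many moves the cop occupies a vertex of that row and, from then on, if the robber ever moves onto that row the cop captures it on its next turn --- so the robber becomes permanently confined to the rows on one side of it. This holds because each row $\{(i,1),\dots,(i,n)\}$ is a geodesic (isometric) path of the robber graph $R$ (the full $n\times n$ grid), all of whose edges lie in $C_H$ (every horizontal edge is in $C_H$), while $C_H$ is connected. The standard guarding strategy uses only moves along the guarded path together with an initial walk to reach it, so a cop confined to $C_H$ can execute it; the set-up phase terminates because the robber's projection onto the path moves by at most one vertex per robber turn, so the cop's walk along the path toward it closes the gap except when the projection moves away, which can happen at most $n-1$ times.

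With this in hand, the strategy for two cops on $C_H$ is as follows. Place both cops in row $1$ and have cop $A$ establish a guard there, so that after finitely many moves the robber is either caught or permanently confined to rows $\{2,\dots,n\}$. Then iterate a hand-off: whenever one cop guards row $r$ and the robber is confined to rows $\{r+1,\dots,n\}$, the free cop travels through $C_H$ onto row $r+1$ --- it can cross row $r$ because $C_H$ contains a rung joining row $r$ to row $r+1$ (at column $1$ if $r$ is even, at column $n$ if $r$ is odd; in fact $C_H$ is a single Hamiltonian path winding up through the rows) --- and establishes a guard on row $r+1$, after which the robber is confined to rows $\{r+2,\dots,n\}$ (or was caught en route). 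The cop that held row $r$ is now free and repeats the manoeuvre to advance the guard to row $r+2$, and so on; throughout, whichever cop is currently assigned to the barrier keeps that guard intact, so the robber never breaches it. After $n-2$ hand-offs the robber is confined to row $n$ alone while one cop still guards row $n-1$; the other cop then walks along $C_H$ onto row $n$, entering at an endpoint of the path ``row $n$'' via the rung between rows $n-1$ and $n$, and runs the robber down, since the robber cannot leave that path. Each phase lasts $\mathcal{O}(n)$ moves and there are $\mathcal{O}(n)$ phases, so the cop player wins. The case of both cops on $C_V$ is symmetric: the transpose automorphism $(i,j)\mapsto(j,i)$ of $\mathcal{G}^2_n$ maps all horizontal edges to all vertical edges and the left/right rungs of $C_H$ to the top/bottom rungs of $C_V$ (the parity conditions matching), and fixes $R$, hence interchanges $C_H$ and $C_V$.

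I expect the main obstacle to be the bookkeeping in the hand-off: one must verify that the invariant ``the robber occupies a contiguous block of rows whose lowest row is the one currently guarded'' is preserved each time, even though the active cop may have to walk through the guarded row --- and through a row where the other cop sits --- to reach its new row, and that establishing the fresh guard genuinely terminates. The latter is the delicate point: the robber can flee along its column while the pursuing cop closes in, but its column coordinate is bounded, so once it is pinned against column $1$ or column $n$ the cop catches up. One also has to track the move order carefully (the cop moves first each round) so that the robber's stepping onto a guarded row really does cost it the game on the cop's immediately following move. None of this needs an idea beyond the path-guarding lemma and the explicit structure of $C_H$.
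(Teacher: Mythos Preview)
Your proposal is correct and is essentially the same strategy as the paper's. Both proofs have one cop guard a row (respectively column) by shadowing the robber's projection onto that isometric path, while the other cop leapfrogs ahead to guard the next row, progressively squeezing the robber's territory until it is trapped on a single line and run down. You phrase the shadowing step by invoking the Aigner--Fromme geodesic-guarding lemma explicitly, whereas the paper spells out the same manoeuvre by hand (``stay in the same row as the robber in your column''), and the paper starts its two cops in adjacent columns rather than establishing the first guard from scratch, but these are cosmetic differences in presentation rather than in the underlying idea.
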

	
	\begin{proof}
		Without loss of generality, we can use the symmetry of $\mathcal{G}^2_n$ to assume that both cops are on $C_V$.
		Additionally, if $n\leq 2$ then the result holds trivially so assume $n\geq 3$.
		We now proceed by giving the strategy for the cops.
		The two cops start on vertices $(1,1)$ and $(1,2)$, and let $(i,j)$ be the vertex the robber starts on.
		The strategy for the cops have  two phases. In the first phase both cops move to be in the same row as the robber. In the second phase, the cop that is initially closer to the robber always moves to stay in the same row as the robber, but otherwise stays in their column (say column $i$), while the second cop advances until this second cop is in the same row as the robber and is in column $i+1$.
		The cops then repeat this second phase to catch the robber.
		
		{\bf Phase 1}
		In the first phase, assume that the robber ends their turn on row $j$, and that the cops start on $(i,1)$ and $(i,2)$ respectively where $i \leq j$. This trivially holds at the start of the game.
		If $i=j$, both cops stay on $(i,1)$ and $(i,2)$ respectively.
		Otherwise, the cops move from $(i,1)$ and $(i,2)$ to $(i+1,1)$ and $(i+1,2)$ respectively until they are in the same row as the robber after their turn (i.e., before the robber moves).
		The cop player transitions to the second phase of their strategy if, just before the robber moves, the robber is at some position $(i,j)$, the cops are in positions $(i,1)$ and $(i,2)$, and the robber is not yet caught.
		Otherwise, this first phase repeats.
		Note that if the robber is in either of the first two columns at the end of this phase, they are caught and the game ends as a win for the cops.
		
		{\bf Phase 2}
		This phase may be repeated; each iteration starts with the robber taking their move.
		Let the robber be at position $(i,j)$ before their move, let the cops be called $c_m$ (moving cop) and $c_b$ (blocking cop) such that $c_m$ is at position $(i,k)$ and $c_b$ is at $(i,(k+1))$ where $k+1 < j$, and let it be the robber's turn to move.
		Note that Phase 1 guarantees that the game is in such a state when Phase 2 starts.
		If the robber moves to a different row, then $c_b$ moves so that $c_b$ is also in the same row as the robber. As a result, the robber can never enter the same column as $c_b$, that is, the robber can never enter column $k+1$, and thus can also not enter any columns to the left of $k+1$.
		While $c_b$ does this, $c_m$ moves along the path in $C_V$ towards the robber until $c_m$ is in column $k+2$ and is in the same row as the robber.
		Cop $c_m$ can reach such a position in at most $3n$ turns; it suffices for our proof that this can be done in a finite amount of time.
		If the robber is now caught, the game is over. If the robber is not caught, it must be in some column $j' \geq k+3$, while the cops are in columns $k+1$ and $k+2$, and Phase 2 repeats with a reversal of the roles $c_b$ and $c_m$.
		
		In each iteration of Phase 2, the cops move one column further to the right, and block the robber from moving to the columns to the left of the cops, so eventually there will be no $j'$ that satisfies $j' \geq k+3$ and $j'\leq n$, at which point the robber must be caught.
	\end{proof}

	While we have just shown that two cops on the same layer can catch the robber, the following result shows that two cops on different layers cannot catch the robber if $n\geq 4$, highlighting that ``spreading out'' the cops is not necessarily a good strategy for the cop player.
	For $n\in \{1,2\}$, two cops can win trivially, regardless of the layer they are on. For $n=3$, two cops on different layers can always catch a robber; this can be proven by a case analysis.
	
	\begin{lemma}\label{lemma:grid-robber-win}
		On $G^2_n$ a robber player can win against two cops if $n\geq 4$ and the cops are on different layers.
	\end{lemma}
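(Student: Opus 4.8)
\begin{proofidea}
The plan is to give the robber an explicit evasion strategy that exploits the fact that each cop is confined to a Hamiltonian path of the grid, and so is extremely slow at changing one of its two coordinates.

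I would begin by recording the structure of $\mathcal{G}^2_n$. Sweeping row $1$ left to right, stepping down to row $2$ at column $n$, sweeping row $2$ right to left, stepping down at column $1$, and so on, exhibits $C_H$ as a Hamiltonian path of the grid; transposing this, $C_V$ is a Hamiltonian path that winds through the \emph{columns}. The robber layer $R=C_H\cup C_V$ is the whole $n\times n$ grid $\flatten{\mathcal{G}^2_n}$. Denote the $C_H$-cop's position by $(h_1,h_2)$ (row $h_1$, column $h_2$) and the $C_V$-cop's by $(v_1,v_2)$. The construction forces: the $C_H$-cop can change its row only by a single step, and only when it sits in column $1$ with $h_1$ even or in column $n$ with $h_1$ odd; dually, the $C_V$-cop can change its column only from row $1$ with $v_2$ even or row $n$ with $v_2$ odd. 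From this I would extract a \emph{slowness lemma}: because the two boundary columns from which a fixed direction of row-change is available have opposite parity, after the $C_H$-cop changes its row it must walk fully across the grid---at least $n-1$ moves---before it can change its row again in that direction. Hence changing its row by $k$ costs at least $(k-1)(n-1)$ turns; in particular the $C_H$-cop needs at least $n-1$ turns to reach any cell whose row differs by $2$ or more from its current row. The analogous statement holds for the $C_V$-cop and columns.

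Next I would describe the robber's strategy in two phases, in the spirit of the proof of Lemma~\ref{lemma:grid-cop-win}. In a short \emph{setup phase}, the robber walks to a cell that is far from both cops in the relevant sense: a row at distance at least $2$ from $h_1$ and a column at distance at least $2$ from $v_2$. Such rows and columns exist because $n\ge4$ (the rows ``close'' to $h_1$ occupy only a band of width $3$, and likewise for columns), and the robber reaches such a cell quickly since it moves freely on the grid. In the \emph{maintenance phase}, the robber keeps to one of the four corner regions of the grid, always choosing one whose rows are well away from $h_1$ and whose columns are well away from $v_2$; the key point is that the ``row-pair'' of corners threatened by the $C_H$-cop and the ``column-pair'' threatened by the $C_V$-cop always leave at least one corner region free. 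Whenever a cop's relevant coordinate creeps close enough to the robber's corner to threaten it, the robber relocates to a freshly safe corner region. By the slowness lemma this is always in time: to ``switch sides'' the $C_H$-cop must change its row $\Omega(n)$ times at a cost of $\Omega(n)$ turns each after the first, i.e.\ $\Omega(n^2)$ turns, whereas the robber, moving freely, relocates between corner regions in $O(n)$ turns; while in a corner region the robber also has room to side-step a lone cop that wanders in, which a single cop on a grid can never capitalise on. Throughout, the robber preserves the property that it is never adjacent, \emph{in that cop's layer}, to either cop, which suffices to guarantee it is never captured (here one uses that a cop not in the robber's row, respectively column, can be adjacent to the robber only at a boundary column, respectively row, which the robber avoids in those situations).

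The main obstacle is the behaviour at the corners themselves. When the $C_H$-cop forces the robber toward row $1$ or $n$ just as the $C_V$-cop sits near the robber's column---or the transposed configuration---the robber's usual ``move away in the other coordinate'' step is blocked, and at the extreme rows the $C_V$-cop abruptly gains horizontal mobility (and symmetrically). The crux of a full write-up is to show, once more via the slowness lemma, that the two cops can never both be poised to exploit this simultaneously and repeatedly, so the robber always has time to slip around the corner before being enclosed; this is precisely the step that fails for $n=3$ and uses $n\ge 4$. Since the constants in the slowness estimates are only comfortable once $n$ is moderately large, I would, if necessary, dispatch the few smallest values of $n$ (certainly $n=4$, and if needed $n=5,6$) by a direct, finite case analysis---the game on a grid of bounded size with two cops is a finite, decidable game---and run the argument above for all larger $n$.
\end{proofidea}
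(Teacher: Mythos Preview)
Your high-level intuition---that each cop is ``slow'' in one coordinate because it is confined to a Hamiltonian path---is exactly right, and your slowness lemma is correct. But the strategy you build on it is substantially more complicated than necessary, and as written it is not a proof: you explicitly identify the corner interaction as ``the main obstacle'' and do not resolve it, instead falling back on an unspecified finite case analysis for small $n$ and an asymptotic $\Omega(n^2)$ versus $O(n)$ comparison for large $n$. That comparison does not by itself yield an invariant the robber can maintain turn by turn; during the $O(n)$ relocation between corners the robber must stay non-adjacent to both cops in their respective layers, and you have not specified a path that achieves this (for instance, while the robber changes rows to reach a new corner, the $C_V$-cop can track its row exactly and may be sitting on an adjacent column).

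The paper's proof bypasses all of this with a much tighter strategy: the robber never leaves the single $2\times 2$ block $\{(1,1),(1,2),(2,1),(2,2)\}$. It sets $a=2$ iff $c_H$ is in row $1$ (else $a=1$), $b=2$ iff $c_V$ is in column $1$ (else $b=1$), and moves toward $(a,b)$. The invariant maintained is that whenever $c_H$ shares the robber's row it must be in column $n$ or $n-1$, whenever $c_V$ shares the robber's column it must be in row $n$ or $n-1$, and these cannot both be at distance $n-1$ simultaneously. This invariant is exactly your slowness observation quantified locally rather than asymptotically, it uses $n\ge 4$ directly (so no separate case analysis), and it handles the diagonal case $a\neq a',\ b\neq b'$ in two lines. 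The lesson is that you do not need four corner regions and relocation; one corner suffices, because a cop that reaches the robber's row or column inside that corner is necessarily at the far end of the grid.
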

	\begin{proof}
		Denote the two cops by $c_H$ and $c_V$ such that $c_H$ is on the layer defined by $C_H$ and $c_V$ is on the layer defined by $C_V$.
		The robber will win by staying within the vertex set $\{(1,1),(1,2),(2,1),(2,2)\}$,
		and choosing a safe path based on whether $c_H$ is in the first row and whether $c_V$ is in the first column.
		
		Each time the robber is to move, and when the robber is to be placed on the graph at the start of the game, the robber player calculates a vertex $(a,b)$ as follows.
		Let $a = 2$ if $c_H$ is in the first row, and let $a = 1$ otherwise.
		Similarly, let $b = 2$ if $c_V$ is in the first column, and let $b = 1$ otherwise.
		The robber starts the game on $(a,b)$.
		In particular, note that $c_V$ does not start in the same column as the robber, and that $c_H$ does not start in the same row as the robber.
		Thus, after the first cop move, if $c_V$ is on the same column as the robber then $c_V$ is on row $n$,
		and if $c_H$ is on the same row as the robber than $c_H$ is in column $n$.
		
		We now explain the robber strategy for either choosing an edge to move along or choosing to stay still.
		For this, we say that the \emph{cop location restrictions} hold (after the cops have moved) 
		if $c_H$ is on the same row as the robber then $c_H$ is either on column $n$ or column $n-1$, 
		if $c_V$ is on the same column as the robber then $c_V$ is either on row $n$ or row $n-1$, and
		if $c_H$ is on the same row as the robber and $c_V$ is on the same column as the robber then either $c_H$ is on column $n$ or $c_V$ is on row $n$.
		These hold before the first robber move by the previous paragraph,
		and the robber strategy will assume that before the robber moves these cop restrictions hold, and then show that after the robber's turn, and any possible subsequent move by the cop player, 
		these cop location restrictions will hold.
		
		To do this, denote the robber location as $(a',b')$, and recall that $(a,b)$ is calculated as described earlier before every robber move.
		If $a = a'$ and $b = b'$, the robber does not move.
		If $a = a'$ and $b \neq b'$, the robber can move along the one edge from $(a',b')$ to $(a,b)$.
		Similarly, if $a \neq a'$ and $b = b'$, the robber can move along the one edge from $(a',b')$ to $(a,b)$.
		In all of these, we see that for any cop movement, the cop location restrictions still hold, so the robber can repeat these moves indefinitely.
		
		It remains to consider the case where $a \neq a'$ and $b \neq b'$.
		It must be that $c_H$ is on the same row as the robber, and $c_V$ is on the same column as the robber.
		If $c_H$ is on column $n$, the robber moves to $(a',b)$.  
		As the robber changed columns, for $c_V$ to be on the same row as the robber after the cop move, they must end in row $n$, and $c_H$ can be in either column $n$ or column $n-1$, so the cop location restrictions hold.
		If $c_H$ is on column $n-1$, $c_V$ must be on row $n$, and the robber moves to $(a, b')$. 
		As the robber changed rows, for $c_H$ to be on the same row as the robber after the cop move, they must end in row $n$, and $c_V$ can be in either column $n$ or column $n-1$, so the cop location restrictions hold.
		
		Thus, the robber can repeat this strategy indefinitely to win. 
	\end{proof}

	\subsection{No Lower Bound by the Cop-Numbers of Individual Layers}

	It is natural to ask if, for any $n$-vertex multi-layer graph $\mlg = (V, \{C_1, \ldots, C_\layers\}, R)$, the multi-cop number of $\mlg$ is bounded from below by the minimum cop-number of a single cop layer; namely, does $\mcop{\mlg} \geq \min_i \cop{(V, C_i)}$ hold? Observe that, if we let $S_n$ denote the star graph on $n$ vertices, any multi-layer graph $\mlg = (V, \{E(S_n), C_2, \ldots, C_\layers\}, R)$ has cop number $1$, as the cop can start on the centre of the star and reach any other vertex in one move.
	This does not resolve the question directly; however, in the next result we build on this idea to show a general bound of the form  $\mcop{\mlg} = \Omega( \min_i \cop{(V, C_i)})$ does not hold. 
	
	\begin{proposition}\label{prop:mincounter}
		For any $c\geq 2$ there exist graphs $G_1 = (V,E_1)$ and $G_2 = (V, E_2)$  such that $\cop{G_1},\cop{G_2}\geq c$ and $\mcop{(V, \{E_1, E_2\}, *)} = 2$.
	\end{proposition}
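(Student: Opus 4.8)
The plan is to exhibit an explicit construction on $2c$ vertices. Let $V=\{u_1,u_2\}\cup Z_1\cup Z_2$, where $Z_1$ and $Z_2$ are disjoint $(c-1)$-element sets, both avoiding $u_1$ and $u_2$. Set $E_1=\{u_1u_2\}\cup\{u_1z:z\in Z_2\}$ and $E_2=\{u_1u_2\}\cup\{u_2z:z\in Z_1\}$. Then $G_1$ is a star with centre $u_1$ and leaf set $\{u_2\}\cup Z_2$, together with the $c-1$ vertices of $Z_1$ as isolated vertices, and symmetrically $G_2$ is a star centred at $u_2$ with leaves $\{u_1\}\cup Z_1$ plus the $c-1$ vertices of $Z_2$ isolated. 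This builds on the spanning-star observation made just before the statement: $u_1$ is a hub from which layer $E_1$ reaches all of $V$ except $Z_1$, and $u_2$ is a hub from which layer $E_2$ reaches all of $V$ except $Z_2$, so a single cop placed on each hub together dominates $V$, while one cop restricted to a single layer is permanently unable to reach that layer's ``private'' set.

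For the cop numbers of the layers: since $Z_1$ is a set of $c-1$ vertices isolated in $G_1$, against $c-1$ cops the robber can either occupy forever a vertex of $Z_1$ not covered by a cop, or, if every vertex of $Z_1$ carries a cop, sit forever on the then-uncovered star component; hence $\cop{G_1}\ge c$, and symmetrically $\cop{G_2}\ge c$.

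For the upper bound $\mcop{(V,\{E_1,E_2\},*)}\le 2$ I would use the allocation with one cop on layer $E_1$ started at $u_1$ and one cop on layer $E_2$ started at $u_2$: after placement the robber must choose a vertex of $Z_1\cup Z_2$, and if it chooses a vertex of $Z_2$ the $E_1$-cop steps onto it along the corresponding star edge on the first move, while if it chooses a vertex of $Z_1$ the $E_2$-cop does the same. For the matching lower bound $\mcop\ge 2$, note that under the allocation $(1,0)$ a cop is confined to $E_1$ and can never stand on a vertex of $Z_1$ (those are isolated in $G_1$), so the robber hides forever on a vertex of $Z_1$ the cop does not start on; in the boundary case $c=2$, where $|Z_1|=1$, if the cop does start on that vertex it is stranded there and the robber survives from any other vertex. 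The allocation $(0,1)$ is symmetric, with the robber camped in $Z_2$. Since no single cop wins under either allocation, $\mcop=2$, which with $\cop{G_1},\cop{G_2}\ge c$ proves the proposition.

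I do not anticipate a genuine obstacle here; the only point needing attention is that $Z_1$ and $Z_2$ must be disjoint. If some vertex were isolated in both $G_1$ and $G_2$ it would also be isolated in the robber graph $E_1\cup E_2$, and the robber could survive there against any number of cops, making the multi-layer cop number infinite. With $Z_1,Z_2$ disjoint the robber graph $E_1\cup E_2$ is a double star (in particular a connected tree with no isolated vertex), and all the verifications above are immediate from the explicit description; if arbitrarily large examples are wanted, extra leaves may be appended to either star without affecting anything.
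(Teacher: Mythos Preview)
Your construction is correct and proves the proposition as stated: the layers have $c-1$ isolated vertices each, forcing $\cop{G_i}\ge c$, while two cops placed at the two hubs dominate $V$ in one move, and one cop confined to either layer is trapped in a single component and misses a vertex.

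It is, however, a genuinely different construction from the paper's. The paper takes an $n$-vertex graph $H$ of girth at least five and minimum degree at least $c$ (so $\cop{H}\ge c$ by the Aigner--Fromme lemma), places two overlapping copies of $H$ on a common vertex $v_n$, and attaches pendant edges so that $v_n$ is adjacent in $E_1$ to every vertex of the second copy and in $E_2$ to every vertex of the first. It then invokes the retract lemma to conclude $\cop{(V,E_i)}\ge\cop{H}\ge c$. Your argument is far more elementary---no cages, no retracts---but it purchases this simplicity by making $G_1$ and $G_2$ disconnected, so that their large cop numbers come purely from counting components. The paper's construction has both layers \emph{connected}, which is a strictly stronger statement and is consistent with the paper's broader stance (cf.\ the cycle example in Remark~5.1) that disconnected layers can inflate multi-layer parameters in pathological ways. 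If the intent of the proposition is to rule out a bound $\mcop{\mlg}\ge f(\min_i\cop{(V,C_i)})$ even for connected layers, your example does not address that; if the literal statement is all that is wanted, your proof is cleaner.
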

	To prove this we shall need the following two lemmas. 
	\begin{lemma}[{\cite[Lemma 3]{AF84}}]\label{lem:mindegbdd}Let $G$ be any connected graph that contains no $3$ or $4$ cycles. Then $\cop{G}\geq \delta(G)$. 
	\end{lemma}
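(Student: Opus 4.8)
The plan is to prove the contrapositive: I will show that $c := \delta(G) - 1$ cops cannot win, by giving the robber a strategy that evades capture forever. The strategy is entirely local. Writing $N[v]$ for the closed neighbourhood of $v$ (that is, $v$ together with its neighbours), I will maintain the invariant that at the start of each of the robber's turns the robber occupies a vertex $v$ carrying no cop, and I will show that from any such position the robber can move within $N[v]$ to a vertex that no cop can reach on the following cop turn. Iterating this local guarantee yields an infinite evasion, so $\cop{G} \geq \delta(G)$.

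The key step is a counting argument driven by the absence of $3$- and $4$-cycles. At the robber's turn the robber at $v$ has $|N[v]| = d_G(v) + 1 \geq \delta(G) + 1$ candidate moves (its neighbours, plus the option of staying at $v$). Call a candidate $x \in N[v]$ \emph{blocked} if some cop at a vertex $w$ satisfies $x \in N[w]$, i.e.\ a cop could step onto $x$ next turn. I claim the $c$ cops block at most $c + 1$ candidates. First, each cop blocks at most one \emph{neighbour} of $v$: a single cop adjacent to two distinct neighbours $u_i, u_j$ of $v$ would close a $4$-cycle $v\,u_i\,w\,u_j$, and a cop standing on a neighbour $u_k$ of $v$ and adjacent to another neighbour $u_i$ would close a triangle $v\,u_k\,u_i$, both forbidden. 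Second, the stay-at-$v$ option is blocked precisely when some cop is adjacent to $v$, and every such cop blocks it simultaneously, so it accounts for at most one further blocked candidate in total. Hence at most $c + 1 = \delta(G)$ of the at least $\delta(G) + 1$ candidates are blocked, and the robber has a safe move $x$; moving there, the robber cannot be caught on the ensuing cop turn and again begins its turn cop-free, so the invariant persists.

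It remains to launch the strategy, i.e.\ to place the robber so that it survives the first cop move; this requires an initial vertex at distance at least $2$ from every cop, equivalently a vertex not dominated by the $c$ cops. I would establish this by showing that a connected graph with no $3$- or $4$-cycle has domination number at least $\delta(G)$, so that $c = \delta(G) - 1$ cops cannot dominate $G$. For this, take any vertex $v$ outside a dominating set $D$ (if $D = V$ the bound is immediate since $|V| \geq \delta(G) + 1$); its $d_G(v) \geq \delta(G)$ neighbours $u_1, \dots, u_{d_G(v)}$ satisfy $N[u_i] \cap N[u_j] = \{v\}$ for $i \neq j$ (a further common vertex would give a $4$-cycle, and $u_i u_j$ an edge a triangle), so the sets $N[u_i] \setminus \{v\}$ are pairwise disjoint. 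Since $v \notin D$, each $u_i$ is dominated by some element of $D \cap (N[u_i] \setminus \{v\})$, and these are distinct across $i$, forcing $|D| \geq \delta(G)$. Choosing the robber's start outside the closed neighbourhoods of the cops then connects the initialisation to the per-turn argument above.

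I expect the main obstacle to be getting the blocking count exactly right rather than the underlying idea: the girth hypotheses must be applied separately in each configuration (a cop on a neighbour of $v$, a cop at distance two from $v$, several cops threatening the same neighbour), and the shared stay-at-$v$ option must be accounted once so that the bound is $c + 1$ and not $2c$. It is worth noting that both the per-turn step and the initialisation rest on the same consequence of the girth condition, namely that the closed neighbourhoods of the neighbours of a vertex are almost disjoint; making this precise in both places is the crux of the argument.
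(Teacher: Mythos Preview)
The paper does not prove this lemma; it is quoted from Aigner and Fromme \cite{AF84} and used as a black box. Your argument is correct and is essentially the classical proof: the girth-$\geq 5$ hypothesis forces any cop to threaten at most one neighbour of the robber's current vertex, so with $\delta(G)-1$ cops at most $\delta(G)$ of the $\delta(G)+1$ options in $N[v]$ are blocked. Your separate initialisation via the domination number is a clean way to launch the invariant, and it relies on the same structural fact (closed neighbourhoods of the $u_i$ meeting only in $v$) that drives the per-turn count, so nothing new is needed there.
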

	
	We define a \textit{homomorphism} $\varphi:G\rightarrow H$ as a map $\varphi:V(G)\rightarrow V(H)$ such that for all $x,y\in V(G)$,  $xy\in E(G)$ implies that $\varphi(x)\varphi(y)\in E(H)$. In particular, homomorphisms do not increase distances.
	If there is a homomorphism from $G$ to $H$ that is the identity on $H$, then $H$ is a \emph{retract} of $G$.
	\begin{lemma}[{\cite[Theorem 3.1]{BerInt}}] \label{lem:retract}If $G$ is connected and  $\varphi:G\rightarrow H$  is a retract, then $\cop{H}\leq \cop{G}$.  
	\end{lemma}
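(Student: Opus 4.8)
The plan is to lift a winning cop strategy on $G$ to one on $H$ via the standard ``shadow'' argument: use the homomorphism $\varphi$ to push cop moves from $G$ down onto $H$, and use the fact that $\varphi$ fixes $H$ pointwise to pull a capture back up. Before doing this I would first check that $\cop{H}$ is even well-defined, i.e.\ that $H$ is connected. Since $\varphi$ is the identity on $H$ it is surjective onto $V(H)$, and homomorphisms send connected graphs to connected graphs: for $u,v\in V(H)\subseteq V(G)$, any $u$--$v$ path in $G$ maps under $\varphi$ to a $u$--$v$ walk in $H$. Hence $H$ is connected, and this is exactly where the hypothesis that $G$ is connected is used.

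Now set $k=\cop{G}$ and fix a winning strategy $\sigma$ for $k$ cops on $G$. I would describe a strategy for $k$ cops on $H$ that runs an \emph{imaginary} copy of the game on $G$ alongside the real game on $H$, maintaining two invariants after every turn: (i) the robber's imaginary position in $G$ equals its real position in $H$; and (ii) each real cop sits on $\varphi(p)$, where $p$ is the position of the corresponding imaginary cop in $G$. To set up, the cops place their imaginary cops according to $\sigma$ and place their real cops on the $\varphi$-images of those vertices; when the robber then picks a real start vertex $q\in V(H)\subseteq V(G)$, the cops copy it verbatim as the imaginary robber's start, establishing both invariants.

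Next I would verify that the invariants persist and that every move called for is legal. When the robber moves in $H$ along an edge $xy\in E(H)\subseteq E(G)$ (or stays), the cops copy this as the imaginary robber's move in $G$, which is legal and re-establishes~(i). The cops then consult $\sigma$, moving an imaginary cop along some $uv\in E(G)$ (or leaving it still); because $\varphi$ is a homomorphism, either $\varphi(u)\varphi(v)\in E(H)$ or $\varphi(u)=\varphi(v)$, so the real cop may legally move from $\varphi(u)$ to $\varphi(v)$ (or stay), re-establishing~(ii). For the capture: since $\sigma$ is winning, at some turn an imaginary cop coincides with the imaginary robber at a common vertex $q$ in $G$; by~(i) this $q$ is the robber's real position, so $q\in V(H)$ and hence $\varphi(q)=q$. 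By~(ii) the corresponding real cop stands on $\varphi(q)=q$, capturing the real robber. Thus $k$ cops win on $H$, giving $\cop{H}\le k=\cop{G}$.

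This argument has no serious obstacle, but the step that must be handled with care is the capture transfer, which relies essentially on $\varphi$ being the identity on $H$ rather than merely a homomorphism. It is precisely this that forces the robber's vertex to be a fixed point of $\varphi$, so that a coincidence in the imaginary $G$-game projects down to a genuine coincidence in the real $H$-game; without it, the projected cop could land on $\varphi(q)\neq q$ and miss the robber.
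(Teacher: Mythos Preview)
Your proof is correct and is the standard shadow argument for this result. Note that the paper does not actually prove this lemma itself; it is quoted without proof from Berarducci and Intrigila \cite[Theorem~3.1]{BerInt}, whose proof is precisely the shadow strategy you describe.
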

	
	We can now prove Proposition \ref{prop:mincounter}, see Figure \ref{fig:arblargemulti} for an illustration of the construction. 
	
	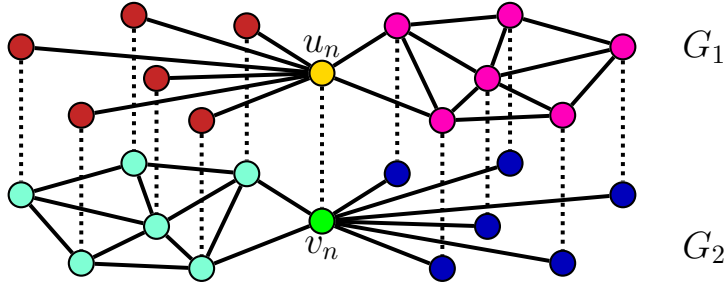
\begin{figure}[!t]
		\centering
		\begin{tikzpicture}[node/.style={thick,circle,draw=black,minimum size=.1cm,fill=white},0node/.style={thick,circle,draw=black,minimum size=.1cm,fill=col0},1node/.style={thick,circle,draw=black,minimum size=.1cm,fill=col1},2node/.style={thick,circle,draw=black,minimum size=.1cm,fill=col2},3node/.style={thick,circle,draw=black,minimum size=.1cm,fill=col3},4node/.style={thick,circle,draw=black,minimum size=.1cm,fill=col6},5node/.style={thick,circle,draw=black,minimum size=.1cm,fill=col4},edge/.style={line width=1.5pt,black},dotedge/.style={line width=1.5pt,black,dotted}, yscale=.7]
			\node[1node] (c) at (0,0) {} ;
			\node[2node] (1) at (1,.9) {} ;
			\node[2node] (2) at (2.5,1.1) {} ;
			\node[2node] (6) at (2.2,-.1) {} ;
			\node[2node] (3) at (1.6,-.9) {} ;
			\node[2node] (4) at (3.2,-.8) {} ;
			\node[2node] (5) at (4,.5) {} ;
			\node[0node] (-1) at (-1,.9) {} ;
			\node[0node] (-2) at (-2.5,1.1) {} ;
			\node[0node] (-6) at (-2.2,-.1) {} ;
			\node[0node] (-3) at (-1.6,-.9) {} ;
			\node[0node] (-4) at (-3.2,-.8) {} ;
			\node[0node] (-5) at (-4,.5) {} ;
			\draw (0,.9) node[anchor=north]{{\Large $u_n$}};

			\draw[edge] (c) to (1);			
			\draw[edge] (c) to (3);
			\draw[edge] (3) to (1);			
			\draw[edge] (1) to (2);
			\draw[edge] (4) to (5);
			\draw[edge] (2) to (5);
			\draw[edge] (6) to (5);
			\draw[edge] (6) to (1);
			\draw[edge] (6) to (3);
			\draw[edge] (6) to (4);
			\draw[edge] (4) to (3);
			\draw[edge] (6) to (2);
			\draw[edge] (c) to (-1);
			\draw[edge] (c) to (-2);
			\draw[edge] (c) to (-3);
			\draw[edge] (c) to (-4);
			\draw[edge] (c) to (-5);
			\draw[edge] (c) to (-6);
			
			\begin{scope}[yshift=-80]
				\node[3node] (c') at (0,0) {} ;
				\node[4node] (1') at (-1,.9) {} ;
				\node[4node] (2') at (-2.5,1.1) {} ;
				\node[4node] (6') at (-2.2,-.1) {} ;
				\node[4node] (3') at (-1.6,-.9) {} ;
				\node[4node] (4') at (-3.2,-.8) {} ;
				\node[4node] (5') at (-4,.5) {} ;
				\node[5node] (-1') at (1,.9) {} ;
				\node[5node] (-2') at (2.5,1.1) {} ;
				\node[5node] (-6') at (2.2,-.1) {} ;
				\node[5node] (-3') at (1.6,-.9) {} ;
				\node[5node] (-4') at (3.2,-.8) {} ;
				\node[5node] (-5') at (4,.5) {} ;
				\draw (5.5,-.5) node[anchor=east]{{\Large $G_2$}};
				\draw (0,-.9) node[anchor=south]{{\Large $v_n$}};
			\end{scope}
			
			\draw[edge] (c') to (1');			
			\draw[edge] (c') to (3');
			\draw[edge] (3') to (1');			
			\draw[edge] (1') to (2');
			\draw[edge] (4') to (5');
			\draw[edge] (2') to (5');
			\draw[edge] (6') to (5');
			\draw[edge] (6') to (1');
			\draw[edge] (6') to (3');
			\draw[edge] (6') to (4');
			\draw[edge] (4') to (3');
			\draw[edge] (6') to (2');
			\draw[edge] (c') to (-1');
			\draw[edge] (c') to (-2');
			\draw[edge] (c') to (-3');
			\draw[edge] (c') to (-4');
			\draw[edge] (c') to (-5');
			\draw[edge] (c') to (-6');

			\draw[dotedge] (c) to (c');
			\draw[dotedge] (1) to (-1');		
			\draw[dotedge] (2) to (-2');
			\draw[dotedge] (3) to (-3');	
			\draw[dotedge] (4) to (-4');
			\draw[dotedge] (5) to (-5');		
			\draw[dotedge] (6) to (-6');
			\draw[dotedge] (-1) to (1');		
			\draw[dotedge] (-2) to (2');
			\draw[dotedge] (-3) to (3');	
			\draw[dotedge] (-4) to (4');
			\draw[dotedge] (-5) to (5');		
			\draw[dotedge] (-6) to (6');
			
			\draw (5.5,.5) node[anchor=east]{{\Large $G_1$}};
			
		\end{tikzpicture}\caption{Illustration of the construction in the proof of Proposition \ref{prop:mincounter}. Note that the two layers are drawn separated; the dotted edges signify that the two end points of that edge are actually the same vertex.}\label{fig:arblargemulti}
	\end{figure}
	\begin{proof}[Proof of Proposition \ref{prop:mincounter}]
		To begin, for any $c\geq 2$, there exists an $n$ and an $n$-vertex graph $H$ of minimum degree at least $c$ and girth at least five\footnote{Consider for instance a $(c,5)$ cage graph~\cite{Exoo2011}}.
		Let $H$ be one such graph on vertices $u_1,\ldots, u_n$.
		It follows by Lemma \ref{lem:mindegbdd} that $\cop{H}\geq c$. 
		We will now construct a multi-layer graph on a vertex set $V$ of size $2n-1$ vertices labelled $v_1,\ldots, v_{2n-1}$. With two layers given by 
		\begin{align*}E_1 &= \{ v_iv_j \mid u_iu_j \in E(H) \} \cup \{v_nv_i \mid i \in \{n+1,\ldots, 2n-1\}\},\\
			E_2 &= \{ v_{2n-i}v_{2n-j} \mid u_iu_j \in E(H) \} \cup \{v_nv_i \mid i \in [n-1]\}.\end{align*}
		Thus, $E_1$ contains a copy of the edges of $H$ transposed to the vertex set $\{v_1,\ldots,v_n\}$ plus $n-1$ pendant edges all incident to $v_n$, and similarly $E_2$ contains the edges of $H$ on $\{v_n,\ldots,v_{2n-1}\}$ plus $n-1$ pendant edges all incident to $v_n$.
		We then define the multi-layer graph $\mlg = (V,\{E_1, E_2\}, *)$;
		see Figure \ref{fig:arblargemulti} for an illustration of this construction.          
		Observe that removing the $n-1$ pendent edges from $(V,E_1)$ (or $(V,E_2$) acts as a sequence of retracts from $G_1$ to $G$, thus $\cop{H}\leq \cop{(V,E_1)}$ by Lemma \ref{lem:retract}, and so $\cop{(V,E_1)},\cop{(V,E_2)}\geq c $.

		To see that $\mlg$ has cop number at most $2$, we place one cop in each layer on vertex $v_n$.
		Clearly the robber cannot reside at vertex $v_n$.
		If the robber is at any vertex $v_1, \dots, v_{n-1}$ then the cop on $E_1$ is one step away from the robber, and if the robber is on any of the vertices $v_{n+1}, \dots, v_{2n-1}$ then the cop on $E_2$ is one step away. So regardless of where the robber starts, two cops can catch the robber after at most one move.
		
		If there is only one cop, without loss of generality we can assume the cop is on layer $E_1$. 
		This then reduces to playing cops and robbers on $(V, E_1)$ and as $c(V,E_1) \geq c \geq 2$, the robber can use their strategy on $(V,E_1)$ to also win on $\mlg$, and so $\mcop{\mlg} = 2$.
	\end{proof}

	Note that as the above proof shows that two cops will catch the robber after at most one cop move, we have the following corollary.
	\begin{corollary}
		For any $c\geq 2$ there exist graphs $G_1 = (V,E_1)$ and $G_2 = (V, E_2)$ such that $\cop{G_1},\cop{G_2}\geq c$, and for any set of edges $R\subseteq \binom{V}{2}$, $\mcop{(V, \{E_1, E_2\}, R)} \leq 2$.
	\end{corollary}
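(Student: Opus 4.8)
The plan is to recycle the construction and the two-cop strategy from the proof of Proposition~\ref{prop:mincounter} essentially unchanged, exploiting the fact that that strategy captures the robber before the robber ever gets to move, so it is completely insensitive to the choice of robber layer.

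First I would take $G_1 = (V,E_1)$ and $G_2 = (V,E_2)$ to be exactly the two graphs built in the proof of Proposition~\ref{prop:mincounter} from a $(c,5)$-cage $H$. The lower bounds $\cop{G_1},\cop{G_2}\geq c$ were already established there by combining Lemma~\ref{lem:retract} (deleting the $n-1$ pendant edges at $v_n$ realises a sequence of retracts onto a copy of $H$) with Lemma~\ref{lem:mindegbdd} (which gives $\cop{H}\geq\delta(H)\geq c$ since $H$ has girth at least five); none of this depends on a robber layer, so those bounds carry over verbatim.

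Next, fix an arbitrary $R\subseteq\binom{V}{2}$ and set $\mlg=(V,\{E_1,E_2\},R)$. I claim two cops suffice: place one cop on $v_n$ in layer $E_1$ and one cop on $v_n$ in layer $E_2$. The robber must then choose a start vertex $v_i$ with $i\neq n$, since otherwise it is caught on placement. By construction each such $v_i$ is joined to $v_n$ by a pendant edge lying in at least one of the two layers --- those with $i\leq n-1$ via a pendant edge of $E_2$, and those with $i\geq n+1$ via a pendant edge of $E_1$ --- so on the cops' very first move the cop occupying the relevant layer steps from $v_n$ onto $v_i$ and captures the robber. Hence $(\mlg,2)$ is a yes-instance for \mlcr, i.e.\ $\mcop{\mlg}\leq 2$, and since $R$ was arbitrary the corollary follows.

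There is essentially no obstacle here; the only point worth flagging is that one cannot simply invoke Proposition~\ref{prop:subsetrobber} together with Proposition~\ref{prop:mincounter}, because $R$ is permitted to contain edges outside $E_1\cup E_2$ and so need not be a subset of the ``free'' robber layer $*$. The direct argument above sidesteps this precisely because the winning play terminates after a single cop move, which makes the identity of the robber layer irrelevant.
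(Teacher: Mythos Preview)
Your proposal is correct and follows exactly the same approach as the paper: the paper's ``proof'' of this corollary is simply the one-line observation that the two-cop strategy from Proposition~\ref{prop:mincounter} captures the robber after at most one cop move, so the robber layer is irrelevant. Your write-up spells this out in slightly more detail and even adds the useful remark that Proposition~\ref{prop:subsetrobber} alone would not suffice (since $R$ may contain edges outside $E_1\cup E_2$), which the paper does not mention.
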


	\subsection{No Upper Bound by Cop Numbers of Individual Layers}\label{sec:cop-number-from-layers} 
	
	The previous section showed that there is no lower bound on the cop number of a multi-layer graph that depends only on the cop numbers of individual layers.
	We now consider the reverse inequality: is the multi-layer cop number bounded from above by a function of the cop numbers of the individual layers?  
	
	If, in a multi-layer graph $\mlg=(V,\{C_1,\dots, C_\layers\},R)$, the robber layer is a subset of one of the cop layers, i.e.\ $R\subset C_i$ for some $i\in [\layers]$, then $\mcop{\mlg}\leq \cop{(V,C_i)}$ as the cop player can allocate $\cop{(V,C_i)}$ cops to layer $i$, ignoring all other cop layers.
	The same reasoning gives an upper bound of $\sum_{i\in [\layers]} \cop{(V,C_i)}$ on the cop number in the ``free choice layer'' variant of the game. Thus, in this special case an upper bound that depends only on the cop numbers of individual layers does exist.
	However, the next result shows that this is not the case in general.   
	\begin{theorem}\label{thm:no-upper-bound-by-cop-number-of-layers}
		For any positive integer $k$, there exists a multi-layer graph $\mlg  = (V, \{C_1, C_2\}, R)$ on $\mathcal{O}(k^3)$ vertices such that:
		\begin{itemize}
			\item each of $(V,R)$, $(V, C_1)$, and $(V, C_2)$ are connected,
			\item $\cop{(V,R)} \leq 3$,
			\item $\cop{(V,C_i)} \leq 2$ for $i\in\{1,2\}$, and
			\item $\mcop{\mlg} \geq k$.
		\end{itemize}
	\end{theorem}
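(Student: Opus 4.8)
The plan is to build, for each $k$, a two-layer graph whose individual layers (and robber layer) have small cop number, but where the robber can exploit the fact that a cop is \emph{stuck} in one layer to shuttle between two ``worlds'' that the cop in the other layer cannot simultaneously cover. The natural template is a product-like construction: take two copies of a gadget, one ``controlled'' by $C_1$ and one ``controlled'' by $C_2$, joined so that traversing edges of $C_1$ lets the robber move freely inside world~1 but only crawl in world~2, and vice versa. Each layer on its own should be (close to) a grid or a union of a few paths/cliques so that $\cop{(V,C_i)}\le 2$ by a path-guarding argument like the one in Lemma~\ref{lemma:grid-cop-win}; the robber layer $R$ should be something with an easily bounded cop number (a tree plus a universal-ish vertex, or a graph retracting onto a small copwin graph via Lemma~\ref{lem:retract}), giving $\cop{(V,R)}\le 3$. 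The $\mathcal{O}(k^3)$ vertex bound suggests the gadget is roughly $k\times k$ in each of two or three ``directions,'' e.g.\ $\Theta(k)$ many parallel tracks each of length $\Theta(k)$, replicated a constant or $\Theta(k)$ number of times.

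First I would fix the precise construction and verify the three easy bullets. For $\cop{(V,C_i)}\le 2$: exhibit each $C_i$ as (a subdivision/union of) geodesic paths arranged so that two cops can clean it by the standard ``one cop guards a shortest path, the other pushes'' strategy, citing the reasoning of Lemma~\ref{lemma:grid-cop-win}; alternatively show $(V,C_i)$ retracts onto a path or grid of bounded cop number and invoke Lemma~\ref{lem:retract}. For $\cop{(V,R)}\le 3$: either directly give a 3-cop strategy, or (more robustly) show $(V,R)$ retracts onto a small copwin or 3-copwin graph. Connectivity of all three is a routine check built into the construction. The substantive part is the last bullet, $\mcop{\mlg}\ge k$: I would argue that against any allocation $(k_1,k_2)$ with $k_1+k_2=k-1$, the robber has an escape strategy. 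The idea is a \emph{potential / counting argument}: the $k_1$ cops in layer $C_1$ can only ``threaten'' a bounded region of world~2 (and fully threaten world~1), while the $k_2$ cops in layer $C_2$ symmetrically threaten a bounded region of world~1; since $k_1+k_2<k$, there is always a track or sub-gadget in one of the worlds that is currently free of any threatening cop, and the robber — using whichever layer gives it fast movement in that world — can reach it before the cops re-deploy, because re-deployment across the gadget costs $\Omega(k)$ moves while the robber only needs $\mathcal{O}(1)$ or $\mathcal{O}(\text{track length})$ moves to relocate within the world it is fast in. Formalizing this as an invariant (``at the start of each robber turn, the robber sits on a vertex of a world-$i$ track containing no cop of the layer that is fast in world $i$, and is at distance $\ge 2$ from every cop'') maintained forever is the crux.

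The main obstacle, and where I would spend the most care, is making the escape invariant genuinely maintainable against \emph{adaptive} cop play with an \emph{arbitrary} split $k_1+k_2 = k-1$: the cops can concentrate all $k-1$ in one layer, or split evenly, and the robber's strategy must handle both. The danger is a ``handoff'' moment — when the robber must cross from a world-1 track to a world-2 track (or between replicated gadgets), it temporarily moves slowly and could be pinched. I would resolve this by engineering the crossing vertices so that each lies on many tracks of the \emph{other} type (high degree / many escape routes), so that at a handoff the number of cops able to be adjacent is at most the number currently committed to that local area, which is $<k$ by assumption, leaving a safe continuation; this is exactly what forces the $\Theta(k)$ blow-up in each direction and hence the $\mathcal{O}(k^3)$ total size (roughly: $\Theta(k)$ tracks $\times$ $\Theta(k)$ length $\times$ $\Theta(k)$ or $O(1)$ copies). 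Once the invariant is stated correctly, checking that each robber move (within-track fast move, or a handoff) preserves it against any single subsequent cop move should be a finite, if tedious, case analysis — I would present it as a lemma-within-the-proof with the two cases ``robber stays in its current world'' and ``robber performs a handoff,'' mirroring the Phase~1 / Phase~2 bookkeeping in the proof of Lemma~\ref{lemma:grid-cop-win}.
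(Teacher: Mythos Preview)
Your plan has the right \emph{scaffolding}---retract arguments for the small upper bounds, and a counting/invariant argument for the lower bound---but it is missing the one idea that makes all three bullets compatible, and the ``two worlds'' picture is likely to lead you away from it rather than toward it.

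The difficulty you correctly flag is the handoff: you need the robber to be able to relocate faster than the cops can redeploy, \emph{while} each layer on its own remains $2$-copwin. Your proposal to fix this is to give crossing vertices high degree, but high-degree crossing vertices make the cop layers easy to police (a cop sitting on such a vertex dominates many tracks), which works against $\mcop{\mlg}\ge k$, not for it. The paper's mechanism is different: it builds in \emph{delay} rather than \emph{degree}. Concretely, the construction has $3k$ ``slices''; each slice is a spider with $k$ legs of length $\Theta(k)$ hanging from a root vertex, and the feet of the legs are wired into a $2k$-cycle whose edges alternate between $C_1$ and $C_2$. Adjacent slices are joined at the roots (in both layers) and at the cycle vertices (in alternating layers). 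The long legs are in \emph{both} $C_1$ and $C_2$, so they do not raise either layer's cop number; but the alternating layer assignment on the cycle means that any single cop, confined to one layer, can reach at most two of the $k$ ``foot rows'' without first climbing a leg of length $\Theta(k)$. That is the counting lemma you want: with $k-1$ cops there is always a foot row no cop can reach in $5k$ steps, and always a slice with no cop nearby, so the robber can cycle to the safe row and then slide across slices to the safe one while the invariant (``no cop within $k$ of my current cycle'') is restored.

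Two smaller points. For $\cop{(V,R)}\le 3$ the paper does use a retract argument, but it needs the stronger form $\cop{G_1}\le \max\{\cop{G_2},\,\cop{G_1-G_2}+1\}$ (Lemma~\ref{lem:retract-subgraph}), applied inductively slice by slice; plain Lemma~\ref{lem:retract} only gives $\cop{H}\le\cop{G}$ in the wrong direction for this purpose. And for $\cop{(V,C_i)}\le 2$, the argument is not a grid/path-guarding strategy but rather that the roots $(x,\infty,\infty)$ form a path of cut vertices in $C_i$, so two cops can chase the robber along this spine and then clean up the small component he is trapped in.
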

	
	This result is proved by giving the construction of such a graph where, for ease of reading, we have not tried to optimise the number of vertices used. The proof is then split over Claims~\ref{claim:unbound-cop-layer-bounded-cop}, \ref{claim:unbound-robber-layer-bounded-cop}, and \ref{claim:unbound-multi-layer-cop-number}.

	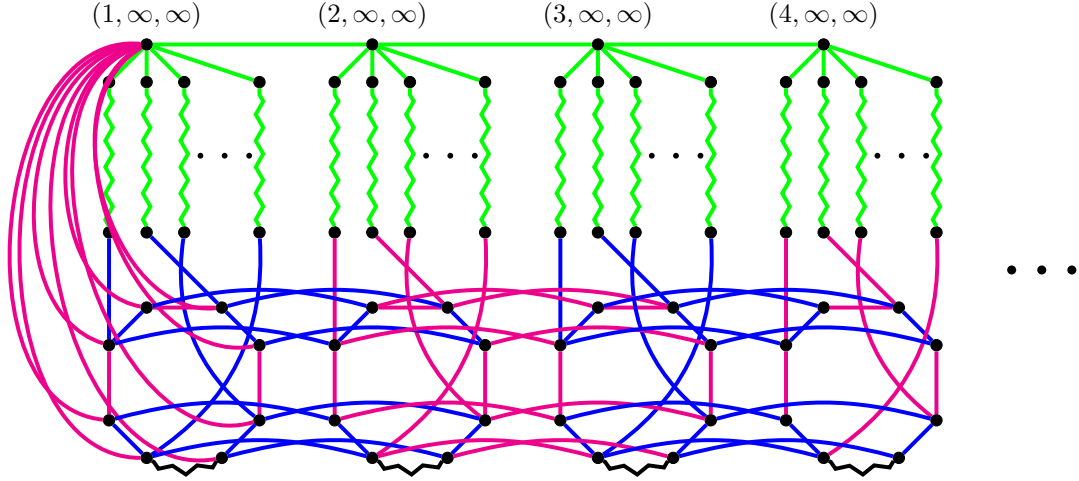
\begin{figure}[!t]
		\centering
		\begin{tikzpicture}[node/.style={circle,draw=black,fill=black,minimum size=1.5mm,inner sep=0pt},edge/.style={line width=1.5pt,black},
			gr/.style={green}, g2/.style={blue}, g3/.style={magenta},
			manymissing/.style={decorate,decoration={zigzag,segment length=3.5mm, amplitude=.5mm},solid}
			]
			
			\node[node,label=above:{$(1,\infty,\infty)$}] (i1) at (1,9) {};
			\node[node] (i1p1) at (0.5, 8.5) {};
			\node[node] (i1p2) at (1.0, 8.5) {};
			\node[node] (i1p3) at (1.5, 8.5) {};
			\node[node] (i1p4) at (2.5, 8.5) {};
			
			\node[node] (i1p1e) at (0.5, 6.5) {};
			\node[node] (i1p2e) at (1.0, 6.5) {};
			\node[node] (i1p3e) at (1.5, 6.5) {};
			\node[node] (i1p4e) at (2.5, 6.5) {};
			
			\node at (2.1, 7.5) {\huge $\cdots$};
			
			\draw[edge,gr] (i1) -- (i1p1);
			\draw[edge,gr] (i1) -- (i1p2);
			\draw[edge,gr] (i1) -- (i1p3);
			\draw[edge,gr] (i1) -- (i1p4);
			
			\draw[edge,gr,manymissing] (i1p1e) -- (i1p1);
			\draw[edge,gr,manymissing] (i1p2e) -- (i1p2);
			\draw[edge,gr,manymissing] (i1p3e) -- (i1p3);
			\draw[edge,gr,manymissing] (i1p4e) -- (i1p4);
			
			\node[node] (i1s1) at (0.5,5) {};
			\node[node] (i1s2) at (1,5.5) {};
			\node[node] (i1s3) at (2,5.5) {};
			\node[node] (i1s4) at (2.5,5) {};
			\node[node] (i1s5) at (2.5,4) {};
			\node[node] (i1s6) at (2,3.5) {};
			\node[node] (i1s7) at (1,3.5) {};
			\node[node] (i1s8) at (0.5,4) {};
			
			\draw[edge,g2] (i1p1e) -- (i1s1);
			\draw[edge,g2] (i1p2e) -- (i1s3);
			\draw[edge,g2] (i1p3e) to[bend right] (i1s5);
			
			\draw[edge,g2] (i1p4e) to[bend left] (i1s7);
			
			\draw[edge,g2] (i1s1) -- (i1s2);
			\draw[edge,g3] (i1s2) -- (i1s3);
			\draw[edge,g2] (i1s3) -- (i1s4);
			\draw[edge,g3] (i1s4) -- (i1s5);
			\draw[edge,g2] (i1s5) -- (i1s6);
			\draw[edge,manymissing] (i1s6) to[bend left] (i1s7);
			\draw[edge,g2] (i1s7) -- (i1s8);
			\draw[edge,g3] (i1s8) -- (i1s1);
			
			\draw[edge,g3] (i1) to[bend right=80] (i1s1);
			\draw[edge,g3] (i1) to[bend right=80] (i1s2);
			\draw[edge,g3] (i1) to[bend right=80] (i1s3);
			\draw[edge,g3] (i1) to[bend right=80] (i1s4);
			\draw[edge,g3] (i1) to[bend right=90] (i1s5);
			\draw[edge,g3] (i1) to[bend right=90] (i1s6);
			\draw[edge,g3] (i1) to[bend right=90] (i1s7);
			\draw[edge,g3] (i1) to[bend right=90] (i1s8);

			\node[node,label=above:{$(2,\infty,\infty)$}] (i2) at (4,9) {};
			\draw[edge,gr] (i1) -- (i2);
			
			\node[node] (i2p1) at (3.5, 8.5) {};
			\node[node] (i2p2) at (4.0, 8.5) {};
			\node[node] (i2p3) at (4.5, 8.5) {};
			\node[node] (i2p4) at (5.5, 8.5) {};
			
			\node[node] (i2p1e) at (3.5, 6.5) {};
			\node[node] (i2p2e) at (4.0, 6.5) {};
			\node[node] (i2p3e) at (4.5, 6.5) {};
			\node[node] (i2p4e) at (5.5, 6.5) {};
			
			\node at (5.1, 7.5) {\huge $\cdots$};
			
			\draw[edge,gr] (i2) -- (i2p1);
			\draw[edge,gr] (i2) -- (i2p2);
			\draw[edge,gr] (i2) -- (i2p3);
			\draw[edge,gr] (i2) -- (i2p4);
			
			\draw[edge,gr,manymissing] (i2p1e) -- (i2p1);
			\draw[edge,gr,manymissing] (i2p2e) -- (i2p2);
			\draw[edge,gr,manymissing] (i2p3e) -- (i2p3);
			\draw[edge,gr,manymissing] (i2p4e) -- (i2p4);
			
			\node[node] (i2s1) at (3.5,5) {};
			\node[node] (i2s2) at (4,5.5) {};
			\node[node] (i2s3) at (5,5.5) {};
			\node[node] (i2s4) at (5.5,5) {};
			\node[node] (i2s5) at (5.5,4) {};
			\node[node] (i2s6) at (5,3.5) {};
			\node[node] (i2s7) at (4,3.5) {};
			\node[node] (i2s8) at (3.5,4) {};

			\draw[edge,g2] (i2s1) -- (i2s2);
			\draw[edge,g3] (i2s2) -- (i2s3);
			\draw[edge,g2] (i2s3) -- (i2s4);
			\draw[edge,g3] (i2s4) -- (i2s5);
			\draw[edge,g2] (i2s5) -- (i2s6);
			\draw[edge,manymissing] (i2s6) to[bend left] (i2s7);
			\draw[edge,g2] (i2s7) -- (i2s8);
			\draw[edge,g3] (i2s8) -- (i2s1);
			
			\draw[edge,g3] (i2p1e) -- (i2s1);
			\draw[edge,g3] (i2p2e) -- (i2s3);
			\draw[edge,g3] (i2p3e) to[bend right] (i2s5);
			\draw[edge,g3] (i2p4e) to[bend left] (i2s7);
			
			\draw[edge,g2] (i1s1) to[bend left=15] (i2s1);
			\draw[edge,g2] (i1s2) to[bend left=15] (i2s2);
			\draw[edge,g2] (i1s3) to[bend left=15] (i2s3);
			\draw[edge,g2] (i1s4) to[bend left=15] (i2s4);
			\draw[edge,g2] (i1s5) to[bend left=15] (i2s5);
			\draw[edge,g2] (i1s6) to[bend left=15] (i2s6);
			\draw[edge,g2] (i1s7) to[bend left=15] (i2s7);
			\draw[edge,g2] (i1s8) to[bend left=15] (i2s8);

			\node[node,label=above:{$(3,\infty,\infty)$}] (i3) at (7,9) {};
			\draw[edge,gr] (i2) -- (i3);
			
			\node[node] (i3p1) at (6.5, 8.5) {};
			\node[node] (i3p2) at (7.0, 8.5) {};
			\node[node] (i3p3) at (7.5, 8.5) {};
			\node[node] (i3p4) at (8.5, 8.5) {};
			
			\node[node] (i3p1e) at (6.5, 6.5) {};
			\node[node] (i3p2e) at (7.0, 6.5) {};
			\node[node] (i3p3e) at (7.5, 6.5) {};
			\node[node] (i3p4e) at (8.5, 6.5) {};
			
			\node at (8.1, 7.5) {\huge $\cdots$};
			
			\draw[edge,gr] (i3) -- (i3p1);
			\draw[edge,gr] (i3) -- (i3p2);
			\draw[edge,gr] (i3) -- (i3p3);
			\draw[edge,gr] (i3) -- (i3p4);
			
			\draw[edge,gr,manymissing] (i3p1e) -- (i3p1);
			\draw[edge,gr,manymissing] (i3p2e) -- (i3p2);
			\draw[edge,gr,manymissing] (i3p3e) -- (i3p3);
			\draw[edge,gr,manymissing] (i3p4e) -- (i3p4);
			
			\node[node] (i3s1) at (6.5,5) {};
			\node[node] (i3s2) at (7,5.5) {};
			\node[node] (i3s3) at (8,5.5) {};
			\node[node] (i3s4) at (8.5,5) {};
			\node[node] (i3s5) at (8.5,4) {};
			\node[node] (i3s6) at (8,3.5) {};
			\node[node] (i3s7) at (7,3.5) {};
			\node[node] (i3s8) at (6.5,4) {};

			\draw[edge,g2] (i3s1) -- (i3s2);
			\draw[edge,g3] (i3s2) -- (i3s3);
			\draw[edge,g2] (i3s3) -- (i3s4);
			\draw[edge,g3] (i3s4) -- (i3s5);
			\draw[edge,g2] (i3s5) -- (i3s6);
			\draw[edge,manymissing] (i3s6) to[bend left] (i3s7);
			\draw[edge,g2] (i3s7) -- (i3s8);
			\draw[edge,g3] (i3s8) -- (i3s1);
			
			\draw[edge,g2] (i3p1e) -- (i3s1);
			\draw[edge,g2] (i3p2e) -- (i3s3);
			\draw[edge,g2] (i3p3e) to[bend right] (i3s5);
			\draw[edge,g2] (i3p4e) to[bend left] (i3s7);
			
			\draw[edge,g3] (i2s1) to[bend left=15] (i3s1);
			\draw[edge,g3] (i2s2) to[bend left=15] (i3s2);
			\draw[edge,g3] (i2s3) to[bend left=15] (i3s3);
			\draw[edge,g3] (i2s4) to[bend left=15] (i3s4);
			\draw[edge,g3] (i2s5) to[bend left=15] (i3s5);
			\draw[edge,g3] (i2s6) to[bend left=15] (i3s6);
			\draw[edge,g3] (i2s7) to[bend left=15] (i3s7);
			\draw[edge,g3] (i2s8) to[bend left=15] (i3s8);
			
			\node[node,label=above:{$(4,\infty,\infty)$}] (i4) at (10,9) {};
			\draw[edge,gr] (i3) -- (i4);
			
			\node[node] (i4p1) at (9.5, 8.5) {};
			\node[node] (i4p2) at (10.0, 8.5) {};
			\node[node] (i4p3) at (10.5, 8.5) {};
			\node[node] (i4p4) at (11.5, 8.5) {};
			
			\node[node] (i4p1e) at (9.5, 6.5) {};
			\node[node] (i4p2e) at (10.0, 6.5) {};
			\node[node] (i4p3e) at (10.5, 6.5) {};
			\node[node] (i4p4e) at (11.5, 6.5) {};
			
			\node at (11.1, 7.5) {\huge $\cdots$};
			
			\draw[edge,gr] (i4) -- (i4p1);
			\draw[edge,gr] (i4) -- (i4p2);
			\draw[edge,gr] (i4) -- (i4p3);
			\draw[edge,gr] (i4) -- (i4p4);
			
			\draw[edge,gr,manymissing] (i4p1e) -- (i4p1);
			\draw[edge,gr,manymissing] (i4p2e) -- (i4p2);
			\draw[edge,gr,manymissing] (i4p3e) -- (i4p3);
			\draw[edge,gr,manymissing] (i4p4e) -- (i4p4);
			
			\node[node] (i4s1) at (9.5,5) {};
			\node[node] (i4s2) at (10,5.5) {};
			\node[node] (i4s3) at (11,5.5) {};
			\node[node] (i4s4) at (11.5,5) {};
			\node[node] (i4s5) at (11.5,4) {};
			\node[node] (i4s6) at (11,3.5) {};
			\node[node] (i4s7) at (10,3.5) {};
			\node[node] (i4s8) at (9.5,4) {};
			
			\draw[edge,g2] (i4s1) -- (i4s2);
			\draw[edge,g3] (i4s2) -- (i4s3);
			\draw[edge,g2] (i4s3) -- (i4s4);
			\draw[edge,g3] (i4s4) -- (i4s5);
			\draw[edge,g2] (i4s5) -- (i4s6);
			\draw[edge,manymissing] (i4s6) to[bend left] (i4s7);
			\draw[edge,g2] (i4s7) -- (i4s8);
			\draw[edge,g3] (i4s8) -- (i4s1);
			
			\draw[edge,g3] (i4p1e) -- (i4s1);
			\draw[edge,g3] (i4p2e) -- (i4s3);
			\draw[edge,g3] (i4p3e) to[bend right] (i4s5);
			\draw[edge,g3] (i4p4e) to[bend left] (i4s7);
			
			\draw[edge,g2] (i3s1) to[bend left=15] (i4s1);
			\draw[edge,g2] (i3s2) to[bend left=15] (i4s2);
			\draw[edge,g2] (i3s3) to[bend left=15] (i4s3);
			\draw[edge,g2] (i3s4) to[bend left=15] (i4s4);
			\draw[edge,g2] (i3s5) to[bend left=15] (i4s5);
			\draw[edge,g2] (i3s6) to[bend left=15] (i4s6);
			\draw[edge,g2] (i3s7) to[bend left=15] (i4s7);
			\draw[edge,g2] (i3s8) to[bend left=15] (i4s8);
			
			\node[fill=black,circle,inner sep=0mm,minimum size=1.2mm] at (12.5,6) {};
			\node[fill=black,circle,inner sep=0mm,minimum size=1.2mm] at (12.9,6) {};
			\node[fill=black,circle,inner sep=0mm,minimum size=1.2mm] at (13.3,6) {};
			
		\end{tikzpicture}
		\caption{A diagram of the construction of $\mlg$ for Theorem~\ref{thm:no-upper-bound-by-cop-number-of-layers}.
			Edges in green are in both $C_1$ and $C_2$, while blue edges are only in $C_1$ and pink edges are only in $C_2$.
			The vertices along the zig-zag green lines are of the form $(x,y,z)$ where $y\in[k]$ and $z\in[5k]$.
			The black zig-zag lines near the bottom of the diagram represent some number of edges alternating in pink and blue, with further pink and blue edges to the next and previous slice.
			The vertices around the cycle formed by the pink and blue edges (including the ones represented by the black zig-zag lines) are of the form $(x,y,z)$ where $y\in[k]$ and $z\in \{5k+1, 5k+2\}$.
		}
		\label{fig:unbounded-cop-number}
	\end{figure}
	We begin by defining the vertex set $V := \{(x, y, z) \mid x\in[3k],\, (y,z) \in \{(\infty, \infty)\} \cup [k] \times [5k+2]\}$.
	For a given $x$, we will refer to the induced subgraph on vertices of the form $(x, y, z)$ (i.e., the induced subgraph on all vertices with a common value for $x$) as the $x$-th slice.
	We will use $(V,E)_x$ to denote the $x$-th slice of $(V,E)$ for some edge set (or layer) $E$.
	
	We now define the edge sets $C_1$ and $C_2$, noting that we will let $R := C_1 \cup C_2$.
	First, for each $y\in [k]$ and each $z\in [5k-1]$ add the edge $(x, y, z)(x, y, z+1)$
	to both $C_1$ and $C_2$.
	Then, for each $y\in [k]$ add the edge $(x, y,5k+1)(x, y,5k+2)$ to $C_1$, and add the edge $(x,y,5k+1)(x,y+1 \mod k, 5k+2)$ to $C_2$.
	Now, for each $x \equiv 1 \mod 2$, and for each $y\in[k]$, add edges of the form $(x,y,5k)(x,y,5k+1)$ to $C_1$,
	and for each $x \equiv 0 \mod 2$, and for each $y\in[k]$, add edges of the form $(x,y,5k)(x,y,5k+1)$ to $C_2$.
	
	We now add edges between different slices.
	For each $x\in [3k-1]$, add the edge $(x,\infty,\infty),(x+1,\infty,\infty)$ to both $C_1$ and $C_2$.
	Then for each $x \equiv 1 \mod 2$ and for each $y\in [k]$ and $z\in [2]$ add the edge $(x,y,5k+z)(x+1,y,5k+z)$ to $C_1$, and
	for each $x \equiv 0 \mod 2$ and for each $y\in [k]$ and $z\in [2]$ add the edge $(x,y,5k+z)(x+1,y,5k+z)$ to $C_2$.
	
	Finally, as $(V,C_2)$ is not currently connected, add edges $(1,\infty,\infty),(1,y,5k+z)$ for each $y\in [k]$ and $z\in[2]$ to $C_2$.
	
	A diagram of this whole construction can be seen in Figure~\ref{fig:unbounded-cop-number}, and a diagram of the induced subgraph on one slice can be seen in Figure~\ref{fig:unbounded-cop-number:one-slice}.
	
	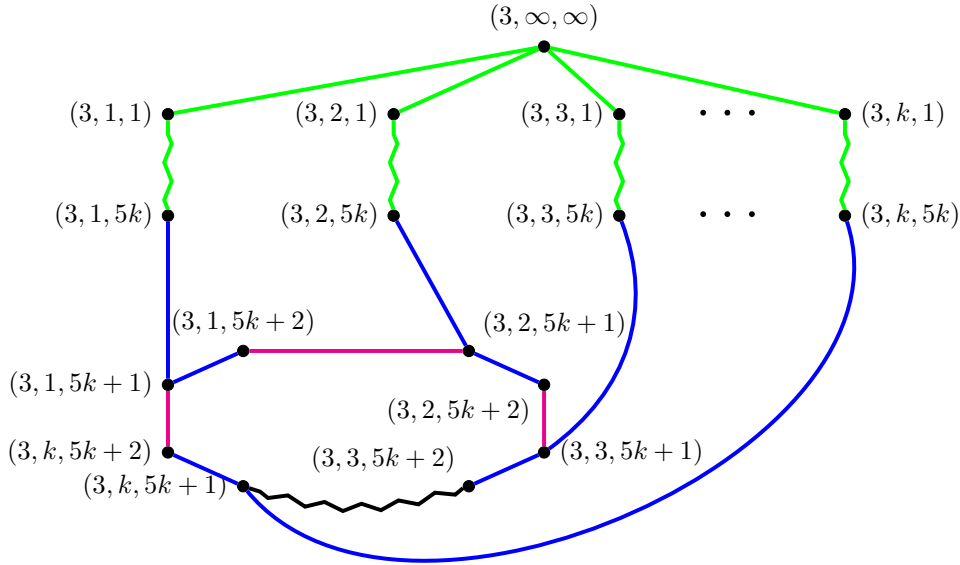
\begin{figure}[!t]
		\centering
		\begin{tikzpicture}[node/.style={circle,draw=black,fill=black,minimum size=1.5mm,inner sep=0pt},edge/.style={line width=1.5pt,black},
			gr/.style={green}, g2/.style={blue}, g3/.style={magenta},
			manymissing/.style={decorate,decoration={zigzag,segment length=5mm, amplitude=.5mm},solid}, yscale=.9	]
			
			\node[node,label=above:{$(3,\infty,\infty)$}] (i1) at (5,9) {};
			\node[node,label=left:{$(3, 1, 1)$}] (i1p1) at (0, 8) {};
			\node[node,label=left:{$(3, 2, 1)$}] (i1p2) at (3, 8) {};
			\node[node,label=left:{$(3, 3, 1)$}] (i1p3) at (6, 8) {};
			\node[node,label=right:{$(3, k, 1)$}] (i1p4) at (9, 8) {};
			
			\node at (7.5, 8) {\huge $\cdots$};
			
			\node[node,label=left:{$(3, 1, 5k)$}] (i1p1e) at (0, 6.5) {};
			\node[node,label=left:{$(3, 2, 5k)$}] (i1p2e) at (3, 6.5) {};
			\node[node,label=left:{$(3, 3, 5k)$}] (i1p3e) at (6, 6.5) {};
			\node[node,label=right:{$(3, k, 5k)$}] (i1p4e) at (9, 6.5) {};
			
			\node at (7.5, 6.5) {\huge $\cdots$};
			
			\draw[edge,gr] (i1) -- (i1p1);
			\draw[edge,gr] (i1) -- (i1p2);
			\draw[edge,gr] (i1) -- (i1p3);
			\draw[edge,gr] (i1) -- (i1p4);
			
			\draw[edge,gr,manymissing] (i1p1e) -- (i1p1);
			\draw[edge,gr,manymissing] (i1p2e) -- (i1p2);
			\draw[edge,gr,manymissing] (i1p3e) -- (i1p3);
			\draw[edge,gr,manymissing] (i1p4e) -- (i1p4);
			
			\node[node,label=left:{$(3,1,5k+1)$}] (i1s1) at (0,4) {};
			\node[node,label=above:{$(3,1,5k+2)$}] (i1s2) at (1,4.5) {};
			\node[node,label=above right:{$(3,2,5k+1)$}] (i1s3) at (4,4.5) {};
			\node[node,label=below left:{$(3,2,5k+2)$}] (i1s4) at (5,4) {};
			\node[node,label=right:{$(3,3,5k+1)$}] (i1s5) at (5,3) {};
			\node[node,label=above left:{$(3,3,5k+2)$}] (i1s6) at (4,2.5) {};
			\node[node,label=left:{$(3,k,5k+1)$}] (i1s7) at (1,2.5) {};
			\node[node,label=left:{$(3,k,5k+2)$}] (i1s8) at (0,3) {};
			
			\draw[edge,g2] (i1p1e) -- (i1s1);
			\draw[edge,g2] (i1p2e) -- (i1s3);
			\draw[edge,g2] (i1p3e) to[bend left=35] (i1s5);
			\draw[edge,g2] (i1p4e) to[bend left=80] (i1s7);
			
			\draw[edge,g2] (i1s1) -- (i1s2);
			\draw[edge,g3] (i1s2) -- (i1s3);
			\draw[edge,g2] (i1s3) -- (i1s4);
			\draw[edge,g3] (i1s4) -- (i1s5);
			\draw[edge,g2] (i1s5) -- (i1s6);
			\draw[edge,manymissing] (i1s6) to[bend left=20](i1s7);
			\draw[edge,g2] (i1s7) -- (i1s8);
			\draw[edge,g3] (i1s8) -- (i1s1);
			
		\end{tikzpicture}    \vspace{-40pt}
		\caption{A diagram of slice 3 of the construction (i.e., the induced subgraph on vertices of the form $(3,y,z)$), with vertices labelled.
			Edges coloured green are in both $C_1$ and $C_2$, while blue edges are only in $C_1$ and pink edges are only in $C_2$.
			Each green zig-zag line represents a path of $5k-1$ green edges.
			The black zig-zag line represents a path of $2(k-4)+1$ alternating pink and blue edges, continuing the shown pattern.
			Every second vertex along this path has a corresponding blue edge to a green path, continuing the pattern of green paths.
			Note that this diagram does not indicate where edges between slices would be present.
		}
		\label{fig:unbounded-cop-number:one-slice}
	\end{figure}
	
	\newpage
	
	We now show that when each layer is considered in conjunction with $V$ as a simple graph, said simple graph has a constant cop number.
	
	\begin{claim}\label{claim:unbound-cop-layer-bounded-cop}
		For $i\in\{1,2\}$,  $\cop{(V,C_i)} \leq 2$.
	\end{claim}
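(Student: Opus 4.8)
The plan is to prove that each of the simple graphs $(V,C_1)$ and $(V,C_2)$ has treewidth at most $2$, and then to invoke the relation $\cop{G}\le\lfloor\trw(G)/2\rfloor+1$ of~\cite{JoretKT10}, which gives $\cop{(V,C_i)}\le\lfloor 2/2\rfloor+1=2$. So everything comes down to showing $\trw((V,C_i))\le 2$ for $i\in\{1,2\}$.

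For this I would first observe that the edges lying in both $C_1$ and $C_2$ (the green edges of Figures~\ref{fig:unbounded-cop-number} and~\ref{fig:unbounded-cop-number:one-slice}) --- namely the spine edges $(x,\infty,\infty)(x{+}1,\infty,\infty)$, the edges from each spine vertex to the tops $(x,y,1)$ of its legs, and the leg edges $(x,y,z)(x,y,z{+}1)$ with $z\le 5k-1$ --- span a tree $T$: the spine is a path and each leg is a pendant path meeting the rest of $T$ only at its spine end. All other edges of $(V,C_i)$ lie among the ``bottom'' vertices $(x,y,5k{+}1)$ and $(x,y,5k{+}2)$, together with (for $C_2$ only) some edges incident to the single vertex $(1,\infty,\infty)$. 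The crux is to verify that in $(V,C_i)$ these remaining edges assemble into a collection of pairwise vertex-disjoint triangles and $4$-cycles, each joined to $T$ through one cut vertex. Concretely: $(V,C_1)$ adds, for each odd $x<3k$ and each $y\in[k]$, the $4$-cycle on $\{(x,y,5k{+}1),(x,y,5k{+}2),(x{+}1,y,5k{+}1),(x{+}1,y,5k{+}2)\}$, joined to $T$ by the bridge $(x,y,5k)(x,y,5k{+}1)$; and $(V,C_2)$ adds the analogous $4$-cycles for each even $x<3k$, plus the $k$ triangles $\{(1,\infty,\infty),(1,y,5k{+}1),(1,y{+}1\bmod k,5k{+}2)\}$ coming from the extra edges at $(1,\infty,\infty)$ and the ``twisted'' slice-$1$ matching of $C_2$, any two of which share only $(1,\infty,\infty)$. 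Granting this, every block of $(V,C_i)$ is an edge, a triangle, or a $4$-cycle; since the treewidth of a graph equals the largest treewidth of a block, $\trw((V,C_i))\le 2$, as required.

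The only step needing real care is the block decomposition above. To establish it one reads off, for each bottom vertex $(x,y,5k{+}1)$ and $(x,y,5k{+}2)$, exactly which edges of $C_i$ are incident to it, doing a short case analysis on the parity of $x$ (and, for $C_2$, the special roles of slice $1$ and of the final slice $x=3k$); the content is that the bottom vertices never chain up into a cycle longer than four, and that all the cycles one obtains are vertex-disjoint and attached to $T$ at a single vertex, so that no $K_4$-minor ever appears. Everything else is routine bookkeeping. Alternatively, one could skip treewidth and exhibit a direct two-cop strategy: one cop runs the standard distance-decreasing pursuit on the tree $T$, and the moment the robber is driven into one of the pendant triangles or $4$-cycles, that cop occupies the unique vertex joining that cycle to $T$, trapping the robber on a path where the second cop then catches it. The treewidth argument is shorter, so I would present that one.
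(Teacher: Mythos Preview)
Your proof is correct, but it takes a different route from the paper's. The paper gives an explicit two-cop strategy: both cops sit on the spine and walk to the vertex $(x^R,\infty,\infty)$ nearest the robber; since this is a cut vertex of $(V,C_i)$, the robber is trapped in a component that is either a bare path or a path with a single $4$-cycle hanging off the end, and two cops finish the job there. Your approach instead reads off the block structure --- the green tree plus pendant $4$-cycles (and, for $C_2$, a fan of triangles at $(1,\infty,\infty)$), each attached at a single cut vertex --- to conclude $\trw((V,C_i))\le 2$, and then applies the Joret--Kami\'nski--Theis bound $\cop{G}\le \trw(G)/2+1$.

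Both arguments ultimately rest on the same structural observation (the only $2$-connected pieces are short cycles), so they are close cousins. What your version buys is a clean black-box finish once the block decomposition is verified; what the paper's version buys is self-containment (no external citation needed) and the concrete strategy, which echoes the ``advance along cut vertices'' idea used elsewhere in the section. Your parity case analysis (odd versus even $x$, and the unpaired final slice when $3k$ has the wrong parity) is exactly what is needed to nail down the block decomposition, and you have identified the pieces correctly. The sketch you give of the direct two-cop strategy at the end is in fact essentially the paper's proof, so you had both arguments in hand.
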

	\begin{proof}
		We prove this claim by giving the cop strategy for two cops.
		The cops both start on $(1,\infty,\infty)$.
		
		First, some extra details if we are considering the graph $(V,C_2)$.
		If the robber starts in the first slice, one cop can stay on $(1,\infty,\infty)$, and be adjacent to any vertex of the form $(1,y,5k+z)$ for $j\in[k]$ and $z\in[2]$.
		The remaining safe vertices reachable by the robber without going through $(1, \infty, \infty)$ form a collection of vertex-disjoint paths in the first slice, and there is no way for the robber to leave the first slice without traversing $(1,\infty,\infty)$. So while one cop stays on $(1,\infty,\infty)$ the other cop traverses the path containing the robber to catch the robber, and thus the cops will win.
		We can therefore assume that the robber does not start in the first slice, which means we can proceed by giving the strategy for two cops on $(V, C_1)$. By symmetry, the same strategy will work for two cops on $(V, C_2)$, giving the result.
		
		During the course of this game, let $(x^R,\infty, \infty)$ be the vertex of the form $(x,\infty, \infty)$ that is closest to the robber.
		By construction, there is always a unique such vertex.
		
		The cops move along vertices of the form $(x,\infty,\infty)$ towards $(x^R, \infty, \infty)$ on each of their moves, noting that the value of $x^R$ may change during these moves.
		Note that if the cops are on $(x,\infty,\infty)$ and the robber is not yet caught, we must have $x^R \geq x$, so the cops must eventually reach $(x^R,\infty,\infty)$.
		Once the cops reach $(x^R,\infty,\infty)$, as it is a cut vertex, the robber is constrained to a subgraph of $(V,C_1)$.
		This subgraph is either a path on $5k$ vertices, or is isomorphic to a path on the vertices $(1,\ldots, 5k+4)$ along with the extra edge $(5k+1,5k+4)$.
		In either case, it is straight-forward to see that the two cops will catch the robber on this subgraph, hence two cops can win.
	\end{proof}
	
	We now find an upper bound for the cop number of one slice in $(V,R)$. We use this result later to upper bound the cop number of $(V,R)$.
	
	\begin{claim}\label{claim:unbound-robber-layer-oneslice}
		For any $i\in[3k]$, the cop number of slice $i$ in $(V,R)$ is at most 2.
	\end{claim}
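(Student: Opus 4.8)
The plan is to bound the treewidth of slice $i$ of $(V,R)$ by $3$ and then apply the estimate $\cop{H}\le 1+\trw(H)/2$ of Joret, Kami\'nski and Theis~\cite{JoretKT10} (recalled in Section~\ref{sec:further}): since the cop number is an integer, $1+3/2$ rounds down to the claimed bound $\cop{H}\le 2$.

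First I would read off the structure of slice $i$ from the definition of $R=C_1\cup C_2$. Writing $h:=(i,\infty,\infty)$ and, for $y\in[k]$, $a_y:=(i,y,5k+1)$ and $b_y:=(i,y,5k+2)$, the edges of $R$ inside the slice are: for each $y$, the ``column'' path $h,(i,y,1),\dots,(i,y,5k),a_y$; the edge $a_yb_y$ (coming from $C_1$); and the edge $a_yb_{y+1}$ (coming from $C_2$), indices taken mod $k$. Thus, apart from the additional edges $ha_y$ and $hb_y$ that are present only when $i=1$, slice $i$ is precisely a subdivision of the wheel $W_k$: hub $h$, the $k$ spokes being the column paths (each of length $5k+1$), and the rim being the $2k$-cycle through the vertices $a_1,b_1,a_k,b_k,\dots,a_2,b_2$. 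Since $\trw(W_k)\le 3$ for every $k$, and subdividing an edge of a graph never raises its treewidth above $\max(\trw,2)$, every slice with $i\ne 1$ has treewidth at most $3$.

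For the slice $i=1$, which additionally joins $h$ to every $a_y$ and every $b_y$, I would exhibit a width-$3$ tree decomposition directly: take the standard width-$3$ decomposition of the wheel on hub $h$ and rim $\{a_y,b_y\}$ (all of which are now neighbours of $h$), and for each $y$ attach to it a path of bags $\{h,a_y,(1,y,5k)\},\{h,(1,y,5k),(1,y,5k-1)\},\dots,\{h,(1,y,2),(1,y,1)\}$ running along the $y$-th column back to $h$. Every bag has size at most $4$, so slice $1$ also has treewidth at most $3$, and the cop-number estimate again gives $\cop{(\text{slice }1)}\le 2$.

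The only point requiring care is the routine bookkeeping that the edge set of slice $i$ in $R$ is exactly the one above (in particular, for $i\ne 1$, that no edge other than those to the column tops meets $h$ within the slice); this is immediate from the definitions of $C_1$ and $C_2$. If one wishes to avoid citing~\cite{JoretKT10}, then for $i\ne1$ there is also a short direct strategy: the spoke $Q_y$ from $h$ to $a_y$ is an isometric path, so a single cop can eventually guard it~\cite{AF84}, after which the robber is confined to $(\text{slice }i)-Q_y$, which is a tree (the $2k$-cycle with one vertex removed, together with the remaining columns as pendant paths), and the second cop catches the robber there; the treewidth argument is, however, both shorter and uniform over all slices.
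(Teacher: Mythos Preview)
Your proof is correct and takes a genuinely different route from the paper's. The paper proves the claim by an explicit two-cop strategy: one cop chases the robber along the rim cycle while the second cop blocks, then the chaser pushes the robber up a column path towards the hub while the blocker repositions to cut off any escape back to the rim; the argument is split into two phases and a separate case for slice~1.

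You instead recognise slice $i$ (for $i\ne 1$) as a subdivision of the wheel $W_k$, note that $\trw(W_k)\le 3$ and that subdivision does not raise treewidth above $\max(\trw,2)$, and then invoke the Joret--Kami\'nski--Theis bound $\cop{H}\le \trw(H)/2+1$ together with integrality. For slice~1 you give an explicit width-$3$ tree decomposition. This is cleaner and more uniform than the paper's case analysis, at the cost of importing a non-trivial external result; the paper's argument is longer but self-contained. Your alternative direct strategy for $i\ne 1$ via guarding an isometric spoke (after which the remainder is a tree) is also sound and again different from the paper's approach; you are right that it does not extend to $i=1$, where the spokes are no longer isometric because of the extra hub--rim edges.

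One cosmetic point: when you say ``$1+3/2$ rounds down to~$2$'' you of course mean that the integer cop number is at most $\lfloor 5/2\rfloor=2$; this is clear from context.
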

	\begin{proof}
		First, some extra details if we are playing on the first slice.
		One cop can start on, and stay on, $(1,\infty,\infty)$, and be adjacent to any vertex of the form $(1,y,5k+z)$ for $j\in[k]$ and $z\in[2]$.
		The other cop starts on $(1,1,5k+1)$.
		The safe vertices for the robber form a collection of vertex-disjoint paths in this slice, so while the one cop stays on $(1,\infty,\infty)$ the other cop traverses the path containing the robber to catch the robber, and thus the cops will win.
		
		We now consider any other slice. These are all isomorphic, so without loss of generality, we can assume we are playing on the second slice.
		The two cops start on $(2,1,5k+1)$ and $(2,1,5k+2)$, and assume the robber starts on $(2,y^R,z^R)$.
		Let $(y',z') = (1,5k+2)$ if $y^R = z^R = \infty$, otherwise let $y' = y^R$ and let $z' = 5k+1$ if $z^R < 5k+1$ and let $z' = z^R$ otherwise.
		The cop on $(2,1,5k+2)$ starts moving towards $(2,y', z')$. As this may take multiple turns, the robber may move during this, meaning that the values of $y'$ and $z'$ need to be recalculated.
		However, if the robber moves to a vertex $(2,y^R,z^R)$ where $z^R\not\in\{5k+1,5k+2\}$, the values of $y'$ and $z'$ do not change and so the chasing cop gets closer to $(2,y',z')$ in such a turn.
		Additionally, vertices of the form $(2,y,z)$ where $y\in[k]$ and $z\in \{5k+1,5k+2\}$ induce a cycle in the slice, and the second cop waiting on $(2,1,5k+2)$ means that the robber cannot indefinitely move along this cycle without getting caught by one of the two cops.
		Therefore, the robber must eventually start moving up a path towards $(2,\infty,\infty)$, meaning the chasing cop must eventually reach $(2,y',z')$.
		
		Once the chasing cop can start their turn on $(2,y',z')$, the next phase starts.
		At this point, by the definition of $y'$ and $z'$ the robber is either on $(2,\infty,\infty)$, or on $(2,y',z^R)$ where $z^R\in[5k]$.
		The chasing cop then continues along the path induced by vertices of the form $(2,y',z)$ for $z\in[5k]$, and then to $(2,\infty,\infty)$.
		This takes at most $5k$ turns.
		As these turns pass, if the robber ever moves to $(2,y',\lfloor 5k/2\rfloor )$ for some $y'$, the second cop (that was still on $(2,1,5k+1)$) starts moving towards $(2,y', 5k+1)$.
		This second cop will be able to reach $(2,y^R,5k+1)$ before the robber, meaning the robber is stuck on the induced subgraph formed by vertices in $\{(2,\infty,\infty)\} \cup 2 \times [k] \times [5k]$.
		However, this subgraph is a tree, so the first cop can catch the robber on this subgraph, so the cop number of any given slice is at most 2.
	\end{proof}
	
	Next, we show that the robber layer in this construction, when treated as a simple graph, has a cop number of at most three.
	We show this using the following result, where for two graphs $G_1 := (V,E_1)$ and $G_2 := (V, E_2)$ on the same vertex set, $G_1 - G_2 := (V, E_1 \setminus E_2)$.
	\begin{lemma}[{\cite[Theorem 3.2]{BerInt}}] \label{lem:retract-subgraph}If $G_1$ is connected and  $\varphi:G_1 \rightarrow G_2$  is a retract, then $\cop{G_1}\leq \max\{ \cop{G_2}, \cop{G_1-G_2} + 1\}$.  
	\end{lemma}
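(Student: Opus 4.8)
The plan is to exhibit a winning strategy for $k := \max\{\cop{G_2},\,\cop{G_1-G_2}+1\}$ cops on $G_1$, organised around the retraction $\varphi\colon G_1\to G_2$ and the idea of chasing the robber's \emph{shadow} $\varphi(r)$. Write $F:=\{v:\varphi(v)=v\}$ for the set of fixed points of $\varphi$; since $\varphi$ maps into $V(G_2)$ and is the identity there, $F=V(G_2)$, so in particular every edge of $G_2$ joins two fixed points. Because $\varphi$ is a homomorphism, whenever the robber moves along an edge $rr'$ of $G_1$ the shadow either stays put (if $\varphi(r)=\varphi(r')$) or moves along the edge $\varphi(r)\varphi(r')$ of $G_2$; thus the shadow always behaves as a legal robber confined to the subgraph $G_2\subseteq G_1$.

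\emph{Phase 1 (catch the shadow).} All $k\ge\cop{G_2}$ cops play, on $G_2$, a winning single-layer strategy against the shadow-robber $\varphi(r)$. As cop moves along $G_2$-edges are legal $G_1$-moves and the shadow is a legal $G_2$-robber, this guarantees that after finitely many rounds some cop occupies $\varphi(r)$ at the end of a cop turn.

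\emph{Phase 2 (pin and chase).} From that moment one cop is designated to sit on the shadow forever: whenever the robber moves $r\to r'$, this cop follows $\varphi(r)\to\varphi(r')$, which is possible precisely because $\varphi$ is a homomorphism, so the invariant ``a cop occupies $\varphi(r)$'' is maintained indefinitely. The crucial consequence is confinement: if the robber ever stands on a fixed point $r\in F$ then $\varphi(r)=r$ and the pinning cop is on top of it, so the robber is caught. Hence a surviving robber must stay in $V\setminus F$; but every edge of $G_1$ incident to a vertex outside $F$ fails to join two fixed points and so lies in $E(G_1)\setminus E(G_2)$, i.e. is an edge of $G_1-G_2$. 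A surviving robber is therefore effectively an ordinary robber on $G_1-G_2$. The remaining $k-1\ge\cop{G_1-G_2}$ cops accordingly run a winning strategy for $G_1-G_2$ against it — first repositioning to their preferred starting configuration along $G_1$-edges, which they may do while the pinning cop maintains confinement — and catch the robber. Since $k$ covers both the $\cop{G_2}$ cops of Phase 1 and the $\cop{G_1-G_2}+1$ cops of Phase 2, the bound follows.

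The main obstacle to make rigorous is the confinement step and the handover between the two simultaneous tasks. I must check carefully that (i) the shadow is a valid robber throughout Phase 1, so the borrowed $G_2$-strategy applies verbatim; (ii) the single pinning cop can always follow the shadow, including when either the robber or its shadow stays still; and (iii) while the chasing cops reposition and then execute the $G_1-G_2$ strategy, the robber truly cannot escape — it cannot traverse a $G_2$-edge (no such edge is incident to a non-fixed vertex) nor step onto $F$ (that is immediate capture), so from the chasing cops' viewpoint it is a genuine robber on $G_1-G_2$. The connectivity hypothesis on $G_1$ is what lets the cops navigate freely to set up each phase.
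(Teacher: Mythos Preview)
Your argument is correct and is precisely the classical shadow-strategy proof that Berarducci and Intrigila give for this result; the paper itself does not supply a proof but simply cites \cite{BerInt}, so there is nothing further to compare.
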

	We can then apply this inductively to prove the following claim. 
	\begin{claim}\label{claim:unbound-robber-layer-bounded-cop}
		$\cop{(V,R)} \leq 3$.
	\end{claim}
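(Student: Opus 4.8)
The plan is to peel off slices one at a time, applying Lemma~\ref{lem:retract-subgraph} at each step and using Claim~\ref{claim:unbound-robber-layer-oneslice} (each slice has cop number at most $2$). Write $c_x:=(x,\infty,\infty)$, so $c_1,\dots,c_{3k}$ form the backbone path of $(V,R)$, and for $m\in[3k]$ let $G^{(m)}$ be the subgraph of $(V,R)$ induced on the first $m$ slices; thus $G^{(3k)}=(V,R)$ and $G^{(1)}$ is slice $1$, whose cop number is at most $2$ by Claim~\ref{claim:unbound-robber-layer-oneslice}. I will prove $\cop{G^{(m)}}\le 3$ by induction on $m$. In the inductive step I exhibit a retract $\varphi\colon G^{(m)}\to G^{(m-1)}$; since the only vertices of $G^{(m)}$ not lying in $G^{(m-1)}$ are those of slice $m$, the graph $G^{(m)}-G^{(m-1)}$ is slice $m$, which has cop number at most $2$, so Lemma~\ref{lem:retract-subgraph} gives $\cop{G^{(m)}}\le\max\{\cop{G^{(m-1)}},\,2+1\}\le 3$.

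For $m\ge 3$ the retract is easy. In $R$ every slice of index at least $2$ induces the same graph --- a $2k$-cycle, with a centre $c_x$ reached from $k$ of the cycle-vertices by internally-disjoint paths of length $5k+1$ --- and slice $1$ contains a homomorphic copy of this graph (it has all these edges, plus extra edges from $c_1$ to the whole cycle). So I take $\varphi$ to be the identity on $G^{(m-1)}$ and to send slice $m$ ``down two slices'' by $c_m\mapsto c_{m-2}$, $(m,y,z)\mapsto(m-2,y,z)$. Every edge internal to slice $m$ maps to the corresponding edge of slice $m-2$, the backbone edge $c_{m-1}c_m$ maps to $c_{m-1}c_{m-2}$, and each cross edge $(m-1,y,5k+z)(m,y,5k+z)$ maps to the cross edge $(m-1,y,5k+z)(m-2,y,5k+z)$ --- all present in $R$ --- so $\varphi$ is a retract onto the induced subgraph $G^{(m-1)}$. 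Here one also uses that $R=C_1\cup C_2$ contains every parity-dependent edge, so equal-parity slices are genuinely isomorphic.

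The delicate case is $m=2$: folding slice $2$ into slice $1$. A generic slice contains a $2k$-cycle, which cannot be retracted into a tree, so this step must exploit the edges that slice $1$ alone carries --- in $R$, $c_1$ is adjacent to every vertex of slice $1$'s $2k$-cycle, which simultaneously provides a $2k$-cycle for the image of slice $2$'s cycle to land on and creates the triangles $c_1vv'$ on consecutive cycle-vertices. I would send slice $2$'s cycle onto slice $1$'s cycle by a rotation through one position (legitimate because the cross edges join corresponding cyclic positions, so a one-step rotation carries each cross edge to a cycle edge of slice $1$), send $c_2$ to a cycle-vertex of slice $1$, and fold each path of slice $2$ into the wheel so that it finishes adjacent to the image of its attachment point; the triangles allow this for either parity of $5k$. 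The main obstacle is carrying out this last folding precisely: checking that the rotation is an honest homomorphism compatible with all $2k$ cross edges and with the positions where the paths of slice $2$ attach to its cycle, and doing the length/parity bookkeeping when folding those paths through the triangles of the wheel.
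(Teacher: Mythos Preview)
Your overall plan is exactly the paper's: peel off the last slice at each stage via Lemma~\ref{lem:retract-subgraph}, using Claim~\ref{claim:unbound-robber-layer-oneslice} to bound the cop number of the peeled slice. The paper states the retract $(V,R)^{\le}_{x'}\hookrightarrow (V,R)^{\le}_{x'+1}$ as a bare assertion and does no casework at all, so your write-up already contains strictly more detail. In particular, your ``send slice $m$ to slice $m-2$'' map is the natural retract for $m\ge 3$, and it works verbatim for $m=3$ since slice~$1$ carries every edge a generic slice has (plus extras).

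The only genuine gap is that you stop short of completing the $m=2$ retract. Your plan is sound; here is why the bookkeeping closes. Label the $2k$-cycle of slice~$x$ as
\[
\dots,(x,y,5k+2),(x,y,5k+1),(x,y{+}1,5k+2),(x,y{+}1,5k+1),\dots
\]
and rotate slice~$2$'s cycle one step onto slice~$1$'s, so that $(2,y,5k+2)\mapsto(1,y,5k+1)$ and $(2,y,5k+1)\mapsto(1,y{+}1,5k+2)$. Each cross edge $(1,y,5k+z)(2,y,5k+z)$ is then carried to a cycle edge of slice~$1$, as you said. For the pendant paths, $(2,y,5k)$ is adjacent to $(2,y,5k+1)\mapsto(1,y{+}1,5k+2)$, and in slice~$1$ the vertex $(1,y{+}1,5k+2)$ is adjacent to $c_1=(1,\infty,\infty)$ via the extra $C_2$ edges; so send $(2,y,5k)\mapsto c_1$. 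Now fold the rest of each path $(2,y,5k),(2,y,5k{-}1),\dots,(2,y,1)$ back and forth along the single edge $c_1\,(1,1,5k+1)$ (both directions are edges of slice~$1$). This sends every $(2,y,1)$ to the same vertex $w\in\{c_1,(1,1,5k+1)\}$, depending only on the parity of $5k$. Finally choose $\varphi\big((2,\infty,\infty)\big)$ to be a common neighbour of $c_1$ and $w$: if $w=c_1$ take $(1,1,5k+1)$, and if $w=(1,1,5k+1)$ take $(1,1,5k+2)$, which is adjacent to both. This handles the backbone edge $c_1c_2$ and all edges $(2,\infty,\infty)(2,y,1)$ simultaneously, so $\varphi$ is a retract onto $G^{(1)}$ and your induction goes through.
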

	
	\begin{proof}
		To begin, for any $x' \in [3k]$ let $(V,R)^\leq_{x'}$ be the induced subgraph of $(V,R)$ on vertices $(x,y,z)$ where $i\leq x'$ (i.e., the induced subgraph on the first $x'$ slices).
		Then noting that $(V,R)^\leq_{x'}$ is a retract of $(V,R)^\leq_{x'+1}$ for $x' \in [3k-1]$, a first application of
		Lemma~\ref{lem:retract-subgraph} on $(V,R)_2$ gives
		\[\cop{(V,R)} \leq \max \left\{ \mathsf{c}\big((V,R)^\leq_{3k-1}\big), \cop{ (V,R)_{3k} } + 1 \right\}.\]
		Further inductive applications of Lemma~\ref{lem:retract-subgraph}, with $G_1 = (V,R)^\leq_i$ and $G_2 = (V,R)_i$ for $i\in [3k-1]$ descending, leads to
		$\cop{(V,R)} \leq \max \{ \cop{(V,R)_2}, \cop{(V,R)_1} + 1 \} = 3.$
	\end{proof}
	
	We now show that if there are less than $k$ cops on $\mlg$, then the robber has a winning strategy, and thus the multi-layer cop number of $\mlg$ is at least $k$.
	\begin{claim}\label{claim:unbound-multi-layer-cop-number}
		$\mcop{\mlg} \geq k$.
	\end{claim}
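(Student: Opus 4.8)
The plan is to show that the robber wins against \emph{every} allocation of $k-1$ cops to the two layers, which is exactly the statement $\mcop{\mlg}\ge k$ (that $(\mlg,k-1)$ is a no-instance). The whole robber strategy will live inside the \emph{core} $P$ of $(V,R)$: the subgraph induced on the cycle vertices $\{(x,y,z):x\in[3k],\ y\in[k],\ z\in\{5k+1,5k+2\}\}$ together with the inter-slice matching edges among those vertices. Because the robber moves on $R=C_1\cup C_2$ it may use every edge of $P$, and $P$ is a $2k$-by-$3k$ cylinder grid whose $3k$ pairwise-disjoint $2k$-cycles are precisely the restrictions $Z_x$ of the slices.

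The engine of the proof is a structural lemma: \emph{as long as a cop stays on vertices of $P$ it is confined to a single $4$-cycle of $P$} --- one spanning two consecutive positions around some $Z_x$ and two consecutive slices. This is a short parity/layer check on the construction: inside $P$ a cop in $C_1$ has available only the blue cycle-edges $(x,y,5k{+}1)(x,y,5k{+}2)$ and the odd-slice matchings, while a cop in $C_2$ has only the pink cycle-edges $(x,y,5k{+}1)(x,y{+}1,5k{+}2)$ and the even-slice matchings; no such move takes a second step around a cycle, and none simultaneously changes the position-pair and the slice. Consequently a cop changes which $4$-cycle it occupies only by leaving $P$ up a tail, walking a green path of length $5k-1$ to an $\infty$-vertex, and descending a second green path; call such a cop \emph{in transit}. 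I would record the quantitative consequences: a transit costs $\Omega(k)$ turns, and --- crucially --- from the moment an in-transit cop first sets foot on a green path heading back down, it needs at least $5k-2$ further turns before it can threaten any vertex of any $Z_x$, which is more than $3k$.

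Then come the counting facts that defeat $k-1$ cops. (i) A cop on, or within its layer adjacent to, a vertex of $Z_x$ occupies-or-threatens at most two positions of $Z_x$; a cop on a neighbouring cycle or at the foot of a tail at most one. Hence each $Z_x$ always has at most $2(k-1)<2k$ \emph{blocked} positions --- in particular \emph{no whole cycle $Z_x$ is ever blocked}, and a short routing argument (each cycle keeps $\ge2$ free positions, and the matching edges preserve the cyclic coordinate) lets the robber travel between any two cop-free cycles along blocked-vertex-avoiding paths in $P$ of length $\bo{k}$. (ii) Each cop's $4$-cycle meets only two slices, so at most $2(k-1)$ slices carry a blocked vertex at any time; even counting the $\le k-1$ slices toward which in-transit cops may currently be descending, fewer than $3k$ slices are ``hot'', so some slice $x^\star$ has $Z_{x^\star}$ completely cop-free, and it stays so for the next $5k-2$ turns.

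The robber strategy is then to always sit on, and circle around, a completely cop-free cycle $Z_{x_t}$ (trivially safe, as nothing threatens it), and, whenever the structural lemma flags that an in-transit cop has begun descending toward a slice near $x_t$ --- which by the lemma grants more than $3k$ turns of warning --- to relocate along a blocked-avoiding $\bo{k}$-path to the fresh cop-free slice guaranteed by (ii). Choosing the construction's parameters (the paper's $\approx 5k$ green-path length, with $3k$ slices and cycle length $2k$) so that the warning window comfortably exceeds the relocation time, and dealing with the opening move the same way (the cops place first, on $\le k-1$ vertices, so a cop-free starting cycle exists), the robber evades capture forever, giving $\mcop{\mlg}\ge k$. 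I expect the main obstacle to be precisely this quantitative step: fixing exactly which cops count as ``threatening'' $x_t$, proving that the $\bo{k}$ relocation path exists \emph{and} stays valid throughout its short duration while cops oscillate within their $4$-cycles and make partial transit progress, and checking that the constants (and the boundary slices, where $C_2$-cops have a direct edge to the $\infty$-vertex) fit together. A secondary subtlety --- and the reason the robber insists on a \emph{completely} cop-free haven rather than a merely sparsely occupied one --- is a move-by-move argument ruling out a ``pincer'' in which two cops parked at the two ends of a short free arc squeeze the robber to capture; this really does work on a crowded cycle, so the robber must never linger on one.
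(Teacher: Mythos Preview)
Your overall architecture matches the paper's: the robber lives in the ``core'' $P$, maintains the invariant of sitting on a cycle $Z_{x}$ that no cop can reach for many turns, and periodically relocates to a fresh such cycle. Your structural lemma (a cop inside $P$ is confined to a single $4$-cycle spanning two consecutive slices and two fixed cyclic positions) is correct and is exactly what underlies the paper's key counting step.

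The gap is in your routing argument. You write that ``each cycle keeps $\ge 2$ free positions, and the matching edges preserve the cyclic coordinate'', and infer a blocked-avoiding path of length $\bo{k}$. But the first clause is a \emph{per-cycle} statement: the two free positions on $Z_x$ and on $Z_{x'}$ need not coincide, so you cannot simply hop slice-to-slice along a single row. What you actually need (and what your own $4$-cycle lemma gives, once stated globally) is a position $(y^s,z^s)$ free in \emph{every} slice simultaneously: each cop's $4$-cycle occupies a fixed pair of $(y,z)$-positions independent of slice, so the union over $k-1$ cops covers at most $2(k-1)<2k$ positions, leaving a globally free row. This is precisely how the paper routes --- it selects $y^s$ so that no cop is within $5k$ moves of any vertex of the form $(x,y^s,z)$, walks around the current cycle to that row ($\le k$ turns, safe by the invariant), and then straight along the row to the target slice ($\le 3k$ turns, safe by the choice of $y^s$). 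Your sketch never upgrades the per-cycle count to this global one, and without it the ``short routing argument'' does not go through.

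A second imprecision: your warning-time claim (``at least $5k-2$ further turns'') is phrased for a cop that has \emph{just} begun descending a green path. A cop already near the bottom of its tail (say at $z^c=5k$) is one move from $P$, so you must also track such cops when choosing the safe row and the safe target slice. The paper handles this uniformly with a single distance condition (``no cop within $5k$ moves of row $y^s$''; ``no cop in slices $x^s-1,x^s,x^s+1$''), which subsumes both cops sitting in $P$ and cops at arbitrary depth on a tail. Once you make the global-row selection explicit and fold the tail-depth into it, your argument becomes essentially the paper's.
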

	\begin{proof}
		We will show that the robber can always avoid capture by $k-1$ cops, and will do so by only staying on vertices of the form $(x,y,z)$ where $z\in\{5k+1,5k+2\}$.
		We say that the \emph{cop location restrictions} are satisfied
		if, when the robber is on vertex $(x^R,y^R,z^R)$ where $z^R\in\{5k+1,5k+2\}$, no cop is within $k$ moves of any vertex of the form $(x^R,y,z)$ where $y\in[k]$ and $z\in\{5k+1,5k+2\}$.
		
		As there are $3k$ slices, the robber can always find a slice $x^R$ such that there is no cop on any of slices $x^R-1$, $x^R$, or $x^R+1$.
		The robber then starts on $(x^R,1, 5k+1)$, so that the cop location restrictions are satisfied.
		
		Now, assuming that the cop location restrictions are satisfied, we show that there is some safe vertex $(x^s,y^s,z^s)$ such that the robber has a path to $(x^s,y^s,z^s)$, no cop can catch the robber while the robber moves to $(x^s,y^s,z^s)$, and regardless of where the cops move, the cop location restrictions will again be satisfied when the robber reaches $(x^s,y^s,z^s)$.
		The robber player, on their turn, chooses an $x^s$ such that there is no cop on any of slices $x^s-1$, $x^s$, or $x^s+1$ (noting that if $x^s-1=0$ or $x^s+1 > 3k$ then vacuously no cop is on those slices).
		Next, note that for any cop on vertex $(x^c,y^c,z^c)$ on layer $i$, the set $\left\{ y \mid \exists x\in[3k], \exists z\in\{5k+1,5k+2\} \text{ s.t. } d_i((x^c,y^c,z^c), (x,y,z)) < 5k\right\}$ has at most two elements (i.e., for any cop, there are at most two values of $y$ such that this cop can reach some vertex $(x,y,z)$ with $x\in[3k]$ and $z\in\{5k+1,5k+2\}$ in less than $5k$ turns).
		Thus, as we have $k-1$ cops, the robber can select $y^s\in[k]$ such that no cop can reach any vertex of the form $(x,y^s,z)$ where $x\in[3k]$ and $z\in\{5k+1,5k+2\}$ in less than $5k$ turns.
		The robber then chooses $z^s=5k+1$, giving a target vertex $(x^s, y^s, z^s)$.
		
		We now show that there is a path to $(x^s, y^s, z^s)$ that the robber can follow without being caught.
		Consider the subgraph induced on vertices of the form $(x^r,y,z)$ where $y\in[k]$ and $z\in\{5k+1,5k+2\}$: this graph forms a cycle, and as the cop location restrictions are satisfied, no cop can reach any such vertex in $k$ or fewer turns.
		The robber moves along a shortest path around this cycle from $(x^r,y^r,z^r)$ to $(x^r,y^s,z^s)$.
		This takes at most $k$ turns, as the cycle has $2k$ vertices and the robber can move in either direction, and so the robber cannot be caught whilst doing so.
		Next, we consider the subgraph induced on vertices of the form $(x,y^s,z^s)$ where $x\in[3k]$; this subgraph forms a path the robber will follow.
		By the choice of $y^s$, no cop could reach any vertex of this path in less than $5k$ turns before the robber started moving, and the robber has taken at most $k$ turns so far,
		so the robber can safely traverse this path to $(x^s,y^s,z^s)$, and this takes at most $3k$ turns.
		Thus traversing this path takes the robber at most $4k$ turns.
		
		Lastly, by our choice of $x^s$, the fact that the cop location restrictions held before the robber started moving, and as the robber takes at most $4k$ turns to move from $(x^r,y^r,z^r)$ to $(x^s,y^s,z^s)$,
		no cop can reach any vertex that would be within $k$ moves of any vertex of the form $(x^s, y,z)$ where $y\in[k]$ and $z\in\{5k+1,5k+2\}$ before the robber reaches $(x^s,y^s,z^s)$, so the cop location restrictions are again satisfied.
		The robber then repeats this process indefinitely to win the game.
	\end{proof}
	We can now prove Theorem~\ref{thm:no-upper-bound-by-cop-number-of-layers}.
	\begin{proof}[Proof of Theorem~\ref{thm:no-upper-bound-by-cop-number-of-layers}]
		By construction each layer is connected, and the rest of the theorem follows from Claims~\ref{claim:unbound-cop-layer-bounded-cop}, \ref{claim:unbound-robber-layer-bounded-cop}, and \ref{claim:unbound-multi-layer-cop-number}.
	\end{proof}
	
	\section{Complexity Results}\label{sec:hardness}
	
	In this section we will examine multi-layer cops and robbers from a computational complexity viewpoint.
	For a background on computational complexity, we point the reader to~\cite{sipser2012introduction}.
	First note that as determining the cop-number of a simple graph is \EXPTIME-complete~\cite{KINNERSLEY2015201}, \mlcrany is also \EXPTIME-complete by the obvious reduction that creates a multi-layer graph with one layer from a simple graph.
	The same reduction, and the fact that, unless the strong exponential time hypothesis fails\footnote{See \cite[Chapter 14]{cygan2015parameterized} for background on the strong exponential time hypothesis.}, determining if a graph is $k$-copwin requires $\Omega(n^{k-o(1)})$ time~\cite{Brandt18LowerBound}, we also get that \mlcrany requires $\Omega(n^{k-o(1)})$ time.
	
	This section begins with an $\mathcal{O}(k^2n^{2k+2})$ algorithm for solving \mlcralloc, adapted from the work of Petr, Portier, and Versteegen~\cite{Petr22FastAlgo}.
	We then show that \mlcrany is \NP-hard if each layer is a tree,
	and that \mlcr is solvable in time $\mathcal{O}(f(k,\layers)\cdot \poly(n))$ for a computable function $f$ if the robber layer is a tree.
	
	\subsection{A ``Fast'' Algorithm for Multi-Layer Cops and Robbers}
	
	An algorithm that determines whether a simple graph $G$ is $k$-copwin in $\mathcal{O}(kn^{k+2})$ time is given in~\cite{Petr22FastAlgo}.
	Petr, Portier, and Versteegen show this by first constructing a \textit{state graph} --- a directed graph $H$ wherein each vertex of $H$ corresponds to a state of a game of cops and robbers played on the original graph $G$.
	They then give an $\mathcal{O}(kn^{k+2})$ algorithm for finding all cop-win vertices of $H$, where a vertex is cop-win if the corresponding state either is a winning state for the cops, or can only lead to a winning state for the cops.
	
	We slightly adapt their techniques to give a similar result for \mlcrany.
	In particular, we will describe the construction of a directed graph $H'$ from a multi-layer graph $\mlg$ such that each vertex of $H'$ corresponds to a state of the game of multi-layer cops and robbers as played on $\mlg$.
	Once constructed, the algorithm from~\cite{Petr22FastAlgo} will identify cop-win vertices of $H'$ in $\mathcal{O}(kn^{k+2})$ time.
	
	We define the vertex set of $H'$ to be the set $[n]^{k+1} \times \mathbb{Z}_{k+1}$ such that vertex $(p_0,p_1,\ldots,p_k, t)\in [n]^{k+1} \times \mathbb{Z}_{k+1}$ corresponds to the state of the multi-layer cops and robbers game where the robber is on vertex $p_0$, cop $i$ is on $p_i$ for $i \in [n]$, and the next agent to move is the robber if $t = k$, and cop $(t+1)$ if $t \neq k$.
	
	Next, there is an edge in $H'$ between a pair of states $(q,s) = ((p_0, p_1,\ldots, p_k,t),(p'_0, p'_1,\ldots, p'_k,t'))$ if and only if:
	\begin{enumerate}
		\item $t' = (t+1) \mod (k+1)$, and
		\item either $p_t = p'_t$ or $(p_t, p'_t)$ is an edge of the layer that cop $t$ belongs to, and
		\item $p_i = p'_i$ for all $i\neq t$.
	\end{enumerate}
	
	Under this construction, it is assumed that the cops are moved one at a time, in order, such that each directed edge leads from a vertex with $t=i$ for some $i$, to a vertex with $t=i+1 \mod (k+1)$.
	This is equivalent to moving all cops simultaneously, as the cop player cannot be interrupted while moving the cops.
	
	In their paper, Petr, Portier, and Versteegen~\cite[Algorithm 1]{Petr22FastAlgo} give an $\mathcal{O}(kn^{k+2})$ algorithm for identifying all cop-win states in a state graph.
	\begin{lemma}[Rephrased from Lemma 2.2 and Theorem 2.3 in~\cite{Petr22FastAlgo}]\label{lem:fastalgoworks}
		All cop-win vertices in a state graph can be identified in $\mathcal{O}(kn^{k+2})$ time.
	\end{lemma}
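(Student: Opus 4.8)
The plan is to re-derive this as the standard linear-time algorithm for solving a finite reachability (AND--OR) game, applied to the state graph $H'$. I would regard each state $(p_0,p_1,\ldots,p_k,t)$ as a node of a two-player game on $H'$: a node in which a cop is about to move is an \emph{OR-node} (the cop player wants to reach a captured state, so it suffices that \emph{one} outgoing edge leads into the current winning set), and a node in which the robber is about to move is an \emph{AND-node} (the robber wants to avoid capture, so such a node helps the cops only if \emph{every} outgoing edge does). Let $W_0$ be the set of \emph{captured} states, namely those with $p_0 = p_i$ for some cop index $i\in[k]$; this set is listed in one linear scan. The cop-win set $W$ is exactly the least set containing $W_0$ that is closed under ``an OR-node with an out-neighbour in $W$ joins $W$'' and ``an AND-node all of whose out-neighbours lie in $W$ joins $W$''; by determinacy of finite reachability games this is precisely the set of states from which the cop player can force a capture in finitely many moves, so it remains only to compute $W$ within the stated time.

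To compute $W$ I would build the reverse adjacency lists of $H'$, together with a counter $c(s)$ for each AND-node $s$ initialised to its out-degree, and then run a backward breadth-first search seeded with $W_0$: when a node $u$ is dequeued (hence now known to lie in $W$), scan each in-neighbour $s$ of $u$; if $s$ is an OR-node not yet in $W$, add it to $W$ and enqueue it; if $s$ is an AND-node, decrement $c(s)$, and when $c(s)$ reaches $0$ add $s$ to $W$ and enqueue it. Each edge of $H'$ is processed $\mathcal{O}(1)$ times over the whole run, so the running time is $\mathcal{O}(|V(H')| + |E(H')|)$. Now $H'$ has $(k+1)n^{k+1}$ vertices, and by its edge rule every vertex has out-degree at most $n$ (the single moving agent either stays put or moves to one of at most $n-1$ neighbours in its own layer), so $|E(H')| = \mathcal{O}(kn^{k+2})$; building the reverse lists and initialising the counters also fits in this bound. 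This yields the claimed $\mathcal{O}(kn^{k+2})$ running time.

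For correctness I would argue the two directions separately. Induction on the backward-BFS layer shows that every state added to $W$ is one from which the cop player can force capture in finitely many rounds, with the AND/OR update being exactly the induction step; here one uses that the ``do nothing'' options --- the robber staying put, and a cop not moving --- appear in $H'$ as edges with $p_t = p'_t$ and are therefore accounted for automatically. Conversely, if a state never enters $W$ then the robber survives forever from it: at every OR-node outside $W$ some cop move keeps the play outside $W$, and at every AND-node outside $W$ some robber move keeps the play outside $W$, so the robber can stay in the complement of $W$ indefinitely and is never captured. Finally one checks that the serialised ``move the cops one at a time'' game encoded by the $t$-coordinate faithfully simulates the game in which the cop player moves all cops at once --- the cop player cannot be interrupted partway through a round --- so that $W$ restricts correctly to those states in which it is the robber's turn to move, which are the states of the original multi-layer cops-and-robbers game.

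I expect the delicate point to be the running time rather than the correctness, which is just finite-game determinacy. The entire benefit of this encoding is that serialising the cop moves multiplies the number of states only by a factor $k+1$ while keeping every out-degree at $\mathcal{O}(n)$, so that a linear-time game solver runs in $\mathcal{O}(kn^{k+2})$; letting the cop player move all $k$ cops in a single step instead would inflate out-degrees to $\mathcal{O}(n^k)$ and hence the running time to something of order $n^{2k+1}$. So the things one must be careful about are the bookkeeping of the backward search --- the per-AND-node counters, and the guarantee that each reverse edge is handled only a bounded number of times --- together with the verification that the state graph $H'$ of a multi-layer instance really does have $\mathcal{O}(kn^{k+1})$ vertices and $\mathcal{O}(n)$ out-degrees. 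With those in hand the lemma follows, recovering Lemma 2.2 and Theorem 2.3 of~\cite{Petr22FastAlgo}.
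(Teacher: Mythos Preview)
Your proposal is correct, and in fact the paper does not prove this lemma at all: it is simply quoted from Petr, Portier, and Versteegen~\cite{Petr22FastAlgo}, so there is nothing in the present paper to compare against. What you have written is precisely the standard retrograde-analysis (backward BFS with AND-node counters) that underlies their Algorithm~1, together with the key observation that serialising the cop moves via the $t$-coordinate keeps every out-degree at $\mathcal{O}(n)$ while only multiplying the number of states by $k+1$, giving $|E(H')|=\mathcal{O}(kn^{k+2})$ and hence the stated running time.

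One minor wording slip in your converse direction: at an OR-node (cop's turn) outside $W$, you write that ``some cop move keeps the play outside $W$''. Since the cop is adversarial, the robber needs that \emph{every} cop move stays outside $W$; fortunately this is exactly what holds, because an OR-node with even one out-neighbour in $W$ would itself have been placed in $W$. So the argument is sound, but the quantifier should be tightened.
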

	
	We now prove our result.
	\begin{theorem}
		\mlcrany can be solved in $\mathcal{O}(k^2n^{2k+2})$.
	\end{theorem}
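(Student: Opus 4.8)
The plan is to reduce \mlcrany to a bounded number of fixed‑allocation games and to dispatch each one with the state‑graph algorithm behind Lemma~\ref{lem:fastalgoworks}. By definition, $(\mlg,k)$ with $\mlg=(V,\{E_1,\dots,E_\layers\})$ is a yes‑instance of \mlcrany exactly when there is an allocation $\alloc=(k_1,\dots,k_\layers)$ with $\sum_i k_i=k$ such that for \emph{every} $j\in[\layers]$ the cop player, using allocation $\alloc$, wins multi‑layer cops and robbers on $(V,\{E_1,\dots,E_\layers\},E_j)$. So I would loop over all allocations $\alloc$ — there are at most $\binom{\layers+k-1}{k}$ of them, since cops on a common layer are interchangeable — and, for each, loop over all choices of robber layer $E_j$, accepting $\alloc$ as a witness iff the resulting fixed‑allocation, fixed‑robber‑layer game is cop‑win for every $j$.

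For a single such game I would construct the state graph $H'$ exactly as described above, with this allocation and this robber layer built in: its vertices are the configurations $(p_0,p_1,\dots,p_k,t)$, and its arcs encode one legal move of the agent picked out by $t$ along an edge of that agent's layer. Running the algorithm of Petr, Portier, and Versteegen (Lemma~\ref{lem:fastalgoworks}) labels all cop‑win vertices of $H'$ in $\mathcal{O}(kn^{k+2})$ time. Since a play begins with the cop player placing all $k$ cops and the robber then choosing a vertex, the game is a cop‑win iff there is a cop placement $(p_1,\dots,p_k)$ for which the configuration $(p_0,p_1,\dots,p_k,t_{\mathrm{cop}})$ with the cop player to move is cop‑win for every robber vertex $p_0$; reading this off from the labelling costs a further $\mathcal{O}(n^{k+1})$, which is dominated. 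Multiplying the number of pairs $(\alloc,E_j)$ by the per‑game cost, and bounding the number of layers, yields the stated running time; equivalently, one can absorb the robber's layer and the allocation into the state itself and run Lemma~\ref{lem:fastalgoworks} once on a single graph with $\mathcal{O}\!\left(k(\layers n)^{k+1}\right)$ vertices.

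Correctness has two routine ingredients. First, the standard state‑graph correspondence: a vertex of $H'$ is cop‑win in the backward‑induction sense used by Lemma~\ref{lem:fastalgoworks} exactly when the cop player wins multi‑layer cops and robbers from that configuration — here it matters that the allocation and the robber's layer are immutable during a game, so they can be fixed parameters of $H'$ rather than dynamic state. Second, the two nested loops realise precisely the quantifier pattern $\exists\alloc\ \forall j\ \exists(p_1,\dots,p_k)\ \forall p_0$ in the definition of \mlcrany; the subtle point is that the cop placement is allowed to differ for different $j$, because each choice of robber layer is treated as a separate instance. I expect the only real difficulty to be bookkeeping rather than any new idea: getting this quantifier structure right, confirming that the Petr--Portier--Versteegen analysis is indifferent to restricting each agent's moves to its own layer (it uses only the abstract state graph, not which edges generate its arcs), and tracking the dependence on the number of layers $\layers$ when assembling the final bound.
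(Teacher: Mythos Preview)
Your approach is correct and follows the same core idea as the paper: build the state graph $H'$ and invoke Lemma~\ref{lem:fastalgoworks}. The paper's proof, however, is far terser than yours. It simply observes that $H'$ has $\mathcal{O}(kn^{k+1})$ vertices, bounds the edge count crudely by $|V(H')|^2=\mathcal{O}(k^2n^{2k+2})$, declares that building $H'$ takes this long, and then cites Lemma~\ref{lem:fastalgoworks}. It does \emph{not} explicitly loop over allocations or robber layers; the quantifier structure $\exists\alloc\,\forall j$ that you carefully unpack is entirely absent from the written argument. In effect the paper's proof, read literally, handles one fixed allocation and one fixed robber layer (i.e.\ \mlcralloc), and the stated $\layers$-free bound is obtained only because the single state graph it builds does not depend on $\layers$. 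Your version is more honest about what \mlcrany actually demands, and you are right to flag the $\layers$-dependence as the residual bookkeeping issue: your outer loops contribute a factor of roughly $\layers^{k+1}$ (or $\binom{\layers+k-1}{k}\cdot\layers$), which the theorem's bound does not mention and which the paper never accounts for. So your argument is at least as complete as the paper's, and arguably more so; the discrepancy in the final running time is a gap in the paper's statement rather than in your reasoning.
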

	\begin{proof}
		We have $\mathcal{O}(kn^{k+1})$ vertices, and thus $\mathcal{O}(k^2n^{2k+2})$ edges and so creating $H'$ takes $\mathcal{O}(k^2n^{2k+2})$ time.
		Then, by construction each vertex of $H'$ represents a state of the game of \mlcrany and each arc of $H'$ corresponds to the movement (or choice to not move) of either the robber, or one cop.
		Thus, by Lemma~\ref{lem:fastalgoworks} we obtain our result.
	\end{proof}

	\subsection{\texorpdfstring{$\NP$}{NP}-hardness When Each Layer is a Tree and Some Isolated Vertices}
	The classical (single layer) cops and robbers game on a tree is solvable in polynomial time, since trees are copwin and the strategy is just to reduce distance to the robber. We show that this is not the case for the \mlcrany problem.
	\begin{theorem}\label{thm:np-by-ds}
		\mlcrany is $\NP$-hard in the number of vertices, even if each layer contains one tree and a set of isolated vertices.
	\end{theorem}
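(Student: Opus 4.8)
The plan is to reduce from the classical $\NP$-complete \textsc{Dominating Set} problem: given a graph $G$ on vertices $v_1,\dots,v_n$ and an integer $k$, decide whether $G$ has a dominating set of size $k$ (writing $N[u]$ for the closed neighbourhood of $u$, i.e.\ $u$ together with its neighbours). From an instance $(G,k)$ I would build a multi-layer graph $\mlg$ with the $n$ layers $E_1,\dots,E_n$ as follows. Put $V := \{h\}\cup\{x_j^t : j\in[n],\ 1\le t\le k+1\}$, so $|V| = 1 + n(k+1)$ is polynomial in $n$; think of $h$ as a hub and, for each $j$, of $h,x_j^1,x_j^2,\dots,x_j^{k+1}$ as a path (the \emph{$j$-th leg}). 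Layer $E_i$ consists exactly of the legs indexed by $N[v_i]$:
\[ E_i := \bigcup_{j\,:\,v_j\in N[v_i]} \Bigl(\{h x_j^1\}\cup\{\, x_j^t x_j^{t+1} : 1\le t\le k \,\}\Bigr). \]
Each $E_i$ is then a spider centred at $h$ (in particular a tree) together with isolated vertices — precisely the vertices of the legs indexed outside $N[v_i]$ — which matches the hypothesis of the theorem.

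Next I would prove that if $G$ has a dominating set $D$ with $|D|\le k$ then the cop player wins \mlcrany: allocate one cop to each layer $E_i$ with $v_i\in D$, place any remaining cops on arbitrary layers, and start every cop at $h$ (note $h$ lies in every layer). Suppose the robber picks layer $E_i$ and a start vertex $w\neq h$. If $w=x_j^t$ with $v_j\in N[v_i]$, pick $v_{i_r}\in D$ dominating $v_j$; then leg $j$ lies inside $E_{i_r}$, so the cop on $E_{i_r}$ walks from $h$ down leg $j$ towards $x_j^{k+1}$. That cop always occupies a vertex of leg $j$ separating the robber from $h$, so the robber is confined to the part of the leg beyond the cop and is eventually cornered at the end — the standard path-guarding argument. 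If instead $w=x_j^t$ is isolated in $E_i$ (i.e.\ $v_j\notin N[v_i]$), the robber can never move, and the cop on a layer $E_{i_r}$ with $v_{i_r}\in D$ dominating $v_j$ simply walks from $h$ along leg $j$ onto $w$. Either way the cops win, so $(\mlg,k)$ is a yes-instance.

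For the converse, suppose $G$ has no dominating set of size $k$. Whatever allocation the cop player commits to, the set of layers receiving at least one cop corresponds to a set of at most $k$ vertices of $G$, which therefore fails to dominate some $v_j$; that is, $v_j\notin N[v_i]$ for every layer $E_i$ carrying a cop. The robber then chooses layer $E_j$ (which contains leg $j$, since $v_j\in N[v_j]$). For every cop-bearing layer $E_i$, leg $j$ is edge-disjoint from $E_i$, so each of $x_j^1,\dots,x_j^{k+1}$ is an isolated vertex of $E_i$; hence a cop restricted to $E_i$ can never move onto any of these vertices, and can sit on one only if it starts there. As there are at most $k$ cops and $k+1$ such vertices, some $x_j^s$ is unoccupied initially; the robber starts there and simply never moves, and so is never captured. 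Thus $(\mlg,k)$ is a no-instance, which establishes the equivalence; since $\mlg$ is computable in time polynomial in $n$, \mlcrany is $\NP$-hard even with the stated restriction on the layers.

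The crux — and the reason the legs are paths on $k+1$ vertices rather than single edges — is this converse direction: we must stop the cop player from ``winning anyway'' in the absence of a dominating set, for instance by parking cops on the vertices of an undominated leg to fence the robber out of it. With $k+1$ vertices per leg this is impossible for $k$ cops, so an undominated layer always leaves a safe refuge; shorter legs would break the reduction. The remaining points are routine but should be checked carefully: that a cop confined to a layer can only ever occupy vertices of the tree-component containing its start vertex (justifying the ``parked cop'' analysis and the handling of robber starts on isolated vertices), that the chase in the forward direction cannot be escaped via $h$ because the chasing cop starts at $h$, and that padding a dominating set of size below $k$ up to exactly $k$ cops does no harm.
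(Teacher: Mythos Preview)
Your reduction is correct, and it is the same high-level idea as the paper's: reduce from \textsc{Dominating Set} by creating one layer per vertex of $G$, each layer being a tree plus isolated vertices. The difference is in the gadget. The paper works directly on the vertex set $V(G)$ and sets $E_u := \{uw : w\in N(u)\}$, so each layer is simply the star centred at $u$ together with the remaining isolated vertices. For the backward direction they do not enlarge the graph; instead, given any placement of $k$ cops, they map each cop to a single vertex of $G$ (the centre of the star it sits in, or its own vertex if isolated), obtain a set $D$ of size at most $k$, observe that the set of vertices reachable by any cop is contained in $N_G[D]$, and let the robber sit on any vertex outside $N_G[D]$.

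Your hub-and-legs construction also works, but the extra $n(k{+}1)$ vertices and the pigeonhole argument on leg length are there only to defeat the ``park a cop on an isolated vertex'' threat. The paper neutralises that threat with the $\phi$-mapping instead, which keeps the instance size at $n$ rather than $\Theta(nk)$ and avoids the path-chasing analysis in your forward direction (one cop move suffices). So both approaches are valid; the paper's is the more economical realisation of the same reduction.
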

	\begin{proof}
		We show this by a reduction from dominating set which is \NP-hard~\cite{Haynes1998}.
		Let $G=(V,E)$ be an instance of dominating set, and let $n=|V|$, and for $u\in V(G)$ we let $N(u)\subseteq V(G)$ be the neighbourhood of $u$.
		Label the vertices of $G$ with elements of $[n]$, then
		create a collection of edge sets/layers as follows:
		for each $u\in V(G)$, create a layer $E_u := \{uw \mid w \in N(u)\}$.
		This can be done in polynomial time.
		
		We now show that $G$ has a dominating set of size $k$ if and only $((V, \{ E_u\}_{u\in V(G)}),k)$ is a yes-instance for \mlcrany. 
		First, assume that $G$ has a dominating set of size $k$, and let $U_d$ be such a dominating set. The cop player starts by putting a cop on vertex $u$ in layer $E_u$ for each $u \in  U_d$.
		By definition of a dominating set, for each $v \in V(G)$ there is some $u\in U_d$ such that $uv \in E(G)$.
		By the construction, this edge $uv$ is also in $E_u$, meaning that for each $v\in V(G)$ there is some $u \in U_d$ such that a cop is on $u$ in layer $E_u$ and therefore no matter where the robber starts, they will either immediately be caught or be caught after one cop move.
		
		Now assume that $G$ does not have a dominating set of size $k$, and consider an arbitrary
		allocation of $k$ cops to vertices in layers.
		Define the function $\phi$ as follows:
		For each cop $c$, if $c$ is on an isolated vertex $v$ in layer $E_i$, let $\phi(c) = v$.
		Else, $c$ must be on either the centre $v_c$ of a star in layer $E_i$, or on one of the leaves $v_\ell$ of said star, but in either case let $\phi(c) = v_c$.
		Then let $D = \{v \mid \phi(c) = v \text{ for some cop } c\}$, noting that $|D| \leq k$,
		and let $V_R = \{ u \mid u \text{ is reachable by some cop }\}$.
		Note that $V_R$ contains exactly the vertices in, or adjacent to, any vertex in $D$.
		Thus, if $V_R = V$ then $D$ is a dominating set of size $k$.
		However, as $G$ does not have a dominating set of size $k$, the set $V\setminus V_R$ must be non-empty.
		The robber can then start on any vertex in $V\setminus V_R$ and stay there forever to win.
	\end{proof}
	
	The next theorem gives a polynomial-time algorithm for determining if $k$ cops can win on a given multi-layer graph if the robber layer is a tree.
	In particular, this result applies even if the layers are not connected.
	\begin{theorem}\label{thm:mlcr-trees}
		Given a multi-layer graph $\mlg = (V, \{C_1,\ldots,C_\layers\}, R)$,
		if $R$ is a tree, then \mlcr on $\mlg$ can be solved in time $\mathcal{O}(f(k,\layers)\cdot \poly(n))$, where $k$ is the number of cops, $\layers$ is the number of layers of $\mlg$, $f$ is a computable function independent of $n$, and $\poly(n)$ is a fixed polynomial in $n$.
	\end{theorem}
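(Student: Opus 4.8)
\emph{Proof plan.} The plan is to decide \mlcr in two stages. Stage one handles the quantifier over allocations: since \mlcr asks whether \emph{some} allocation of the $k$ cops among the $\layers$ layers wins, I would iterate over all such allocations — there are at most $\binom{k+\layers-1}{\layers-1}\le(k+1)^{\layers}$ of them — and for each solve \mlcralloc on $(\mlg,\alloc)$, returning yes iff one of them is a yes-instance. Stage two, the heart of the argument, is to solve \mlcralloc in time fixed-parameter tractable in $k$ when $R$ is a tree; taking the disjunction over allocations then yields the claimed $\BO{f(k,\layers)\cdot\poly(n)}$ bound. Connectivity of the layers plays no role here.

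For stage two, the naive approach — build the state graph whose vertices are $(\text{robber vertex},\text{cop vertices},\text{whose turn})$ and compute the cop-win set by the method underlying Lemma~\ref{lem:fastalgoworks} — has $\BO{n^{k+1}}$ states, so its exponent grows with $k$ and it does not suffice. The tree structure of $R$ must be used to collapse this to $f(k,\layers)\cdot\poly(n)$ states. The key structural fact is that, because the cop player moves first in each round, the robber can never step onto an occupied vertex; hence in the tree $R$ the cops' vertices act as impassable walls, and at all times the robber is confined to a single connected component $T_{\mathrm r}$ of $R$ with the cop positions deleted, a subtree attached to the rest of $R$ through at most $k$ boundary cops. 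I would then argue that only a bounded amount of information about the configuration is relevant: the topological shape of $T_{\mathrm r}$ (suppress degree-$2$ vertices to get a tree on $\BO{k}$ nodes); for each topological edge of $T_{\mathrm r}$ its length, but recorded only up to a threshold $D(k,\layers)$ — beyond which, intuitively, only ``can the far cop ever breach this side'' matters, not the exact gap; for each cop a quantized ``time-to-breach'' along its own layer $C_i$ toward each boundary vertex of $T_{\mathrm r}$ (a quantity precomputable from $C_i$ by breadth-first search); and whose turn it is. Bundled together these give $f(k,\layers)$ abstract states, with $n$ entering only polynomially — in translating the input into an abstract state and in computing the abstract transition relation — after which a reachability (attractor) computation over this compressed graph, exactly as in~\cite{Petr22FastAlgo}, identifies the cop-win states in time polynomial in its size, i.e.\ $f(k,\layers)\cdot\poly(n)$.

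The main obstacle is proving that this abstraction is sound and complete: that the winner of the real game is a function of the abstract state only, that every legal move of either player induces a corresponding abstract move, and — most delicately — that the discarded information never helps the robber. This calls for a ``retreat''/exchange lemma showing that a robber deep inside a cop-free branch of $T_{\mathrm r}$ may be assumed to play monotonically, never decreasing its distance to the nearest approaching cop along $R$, so that sufficiently large gaps are genuinely interchangeable and the length-quantization is lossless; and it calls for a careful choice of the threshold $D(k,\layers)$ together with correct update rules for the quantized counters when a cop move splits or merges components of $R$ minus the cop positions (thereby changing $T_{\mathrm r}$, or which cops bound it). Pinning down these thresholds and invariants is where essentially all the work lies; the remainder is a direct instantiation of the state-graph/attractor method of~\cite{Petr22FastAlgo} already used in this section.
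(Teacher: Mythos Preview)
Your Stage~1 matches the paper (iterate over allocations, then decide \mlcralloc for each), but your Stage~2 is both far more elaborate than what is needed and, as written, genuinely incomplete. The paper does not build or compress any state graph. Instead it gives an explicit \emph{characterization} of the winner for a fixed allocation: call an edge $uv\in R$ a \emph{robber's edge} if at most one cop $c$ can reach either of $u,v$ (reachability in $c$'s layer) and the $u$--$v$ distance in $c$'s layer is at least~$3$. If such an edge exists the robber wins by shuttling between $u$ and $v$; if not, the cops win by a simple monotone strategy on the tree $R$ (advance a cop one edge towards the robber along $R$, using the ``no robber's edge'' hypothesis to guarantee some cop can occupy the next tree-vertex within two moves while the barrier is held). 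Testing for a robber's edge is a polynomial-time reachability/distance check per edge of $R$. So the whole \mlcralloc question collapses to a combinatorial test with no game-tree search at all.

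Your plan, by contrast, leaves the hard part unproved. You acknowledge that the soundness of the abstraction --- the retreat/exchange lemma and the existence of a threshold $D(k,\layers)$ independent of $n$ --- is ``where essentially all the work lies'', but you give no argument for either. It is not clear a priori that distances in the (arbitrary, unconstrained) cop layers $C_i$ can be quantized to a bound depending only on $k$ and $\layers$: cop-layer distances can be $\Theta(n)$, and nothing in your sketch explains why two configurations differing only in such a large distance must have the same winner. (In hindsight the paper's characterization shows the only thresholds that matter are ``reachable at all'' and ``distance $\ge 3$'', but you would have to discover essentially that fact to make your compression go through.) As it stands, this is a plausible research programme rather than a proof.
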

	\begin{proof}
		If, given an allocation of cops to layers, we can answer \mlcralloc in time $\mathcal{O}(f(k,\layers)\cdot \poly(n))$, then as there are
		$\layers^k$ ways to choose a layer for each cop, we can also solve \mlcr in time $\mathcal{O}(f(k,\layers)\cdot \poly(n))$.
		Note that this allows scenarios where multiple cops are on the same layer.
		Thus, for the rest of this proof we assume that we know how many cops are assigned to each layer.
		If there is an edge $uv$ in the robber layer such that both $u$ and $v$ are only reachable by at most one cop, say $c_i$, and if the distance from $u$ to $v$ for $c_i$ is at least $3$, the robber player wins by starting on, and staying on, the vertex from $\{u,v\}$ that is farthest from $c_i$.
		We will call such an edge a \emph{robber's edge}, so that if a robber's edge exists then the robber can win.
		We next show that if no robber's edge exists, the cops can win, and
		then show that determining if a robber's edge exists can be done efficiently.
		
		Assume that there is no robber's edge.
		The cop player can then win by iterating the following process.
		Note that each iteration will consist of zero or more rounds of the actual game.
		Let $S_0$ be the set of vertices that are reachable by the robber without going through a vertex occupied by a cop after the cops and the robber have been placed on the graph, but before any moves take place. Similarly, let $S_t$ be the set of vertices that the robber can reach without going through a vertex occupied by a cop after $t$ iterations of the cop strategy, which we now outline.
		
		As we are giving the cop strategy, we assume that it is now the cop player's turn.
		Let $u$ be the location of a cop that is closest to the robber in the robber layer.
		If there are multiple such locations, choose one arbitrarily.
		Then let $w$ be the vertex that is adjacent to $u$ in the robber layer and is strictly closer to the robber's current position than $u$. Such a $w$ must exist as $R$ is a tree.
		The cop player moves a cop to $w$ such that the cop on $u$ either stays on $u$, moves directly to $w$, or moves from $u$ to some vertex $u'$ and then (unless the robber has now moved to $u$, in which case this cop captures the robber by moving back to $u$) moves to $w$.
		This is possible as there is no robber's edge.
		Then $S_1 = S_0 \setminus \{u\}$, as the only way the robber can leave $S_0$ is via $v$, which is occupied or threatened by a cop until a cop reaches $u$, but as $R$ is a tree, $u$ is the second-last vertex on any path the robber can take to $v$.
		More generally, $|S_{t+1}| \leq |S_t| - 1$.
		Thus, after $|S_0|$ iterations the robber has zero vertices which they can occupy and must be caught.
		
		Thus determining if the cop player can win turns into determining, for a given allocation, if there exists a robber's edge.
		This can be done by inspecting each edge in the robber layer and determining how many cops can reach either endpoint.
		If two or more cops can reach either endpoint, we are done with this edge, else only one cop can reach either end point, and we need only check distance in the layer occupied by this cop.
		This can all be checked in time polynomial in $n$ --- for each vertex we can determine the number of cops that can reach said vertex by a simple graph traversal algorithm.
	\end{proof}
	
	As a quick corollary, if even one cop layer is connected, and we have at least two cops, by putting both cops on this layer there is no edge $uv$ such that its two endpoints $u$ and $v$ are only reachable by one cop, giving the following.
	
	\begin{corollary}
		Given a multi-layer graph $\mlg = (V, \{C_1,\ldots,C_\layers\}, R)$,
		if $R$ is a tree, and at least one cop layer is connected, then $\mcop{\mlg} \leq 2$.
	\end{corollary}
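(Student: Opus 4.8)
The plan is to read off the result directly from Theorem~\ref{thm:mlcr-trees}. Recall that its proof shows: given an allocation of cops to layers, the cop player wins if and only if there is no \emph{robber's edge}, that is, no edge $uv$ of $R$ whose two endpoints are both reachable by at most one cop $c_i$ and for which the distance from $u$ to $v$ in $c_i$'s layer is at least $3$. So to bound $\mcop{\mlg}$ it suffices to exhibit one allocation of two cops under which no robber's edge exists.

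First I would pick a cop layer $C_j$ that is connected --- one exists by hypothesis --- and allocate both cops to $C_j$. Since $(V, C_j)$ is connected, a cop starting anywhere in $C_j$ can walk along edges of $C_j$ to reach every vertex of $V$; hence each of the two cops can reach every vertex of $V$, so every vertex of $V$ is reachable by (at least) two distinct cops. In particular, no edge $uv$ of $R$ can have the property that both $u$ and $v$ are reachable by at most one cop, so there is no robber's edge under this allocation. Applying the cop-side direction of Theorem~\ref{thm:mlcr-trees} (which is where the assumption that $R$ is a tree is used: the distance-reduction strategy along the tree $R$), the cop player wins with these two cops, and therefore $\mcop{\mlg} \leq 2$.

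There is essentially no obstacle here, since this is a straightforward corollary; the only points worth a sentence of care are that ``reachable by a cop confined to a connected layer'' means every vertex (immediate from connectivity), and that degenerate cases --- e.g.\ $\mlg$ already satisfying $\mcop{\mlg} \leq 1$, or only a single cop layer being present --- only make the bound easier and do not need separate treatment, as we are exhibiting a \emph{winning} two-cop allocation rather than claiming optimality of two cops.
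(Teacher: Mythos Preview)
Your proposal is correct and follows essentially the same approach as the paper: allocate both cops to a connected layer $C_j$, observe that every vertex is then reachable by both cops so no robber's edge can exist, and invoke the cop-win direction of Theorem~\ref{thm:mlcr-trees}. The paper's own justification is a single sentence making exactly this observation.
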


	\section{Extremal Multi-Layer Cop-Number} \label{sec:upper-bounds} 
	In this section we study, for a given  connected graph $G = (V,E)$, the extremal multi-layer cop number of $G$. This is the multi-layer cop number maximised over the set of all multi-layer graphs with connected cop-layers, which when flattened give $G$. More formally, for  given connected graph $G=(V,E)$, if we define the set
	\[\mathcal{L}(G,\layers)  = \{(V, \{C_1, \dots, C_\layers\}, *) : E = C_1 \cup \cdots \cup C_\layers \text{ and for each }i\in[\layers], (V,C_i) \text{ is connected}\} , \] then the \text{extremal multi-layer cop-number} of the (single-layer) graph $G$ is given by 
	\begin{equation*}\label{eq:extremalCN}
		\maxcop{\layers}{G} = \max_{\mlg\in \mathcal{L}(G,\layers)} \mcop{\mlg}.
	\end{equation*}
	
	Observe that we set $R=E$ as any choice of robber layer $R\subsetneq E$ gives a cop number at most that with $R=E$ by Proposition \ref{prop:subsetrobber}. \begin{remark}\label{rem:cycleexample}We  restrict the problem to connected layers to avoid pathological examples; in particular if each layer is not connected then one can treat each component as a separate layer which can lead to a larger than expected cop number.
		For example consider a $(2n)$-vertex cycle which has two (disconnected) layers that are edge disjoint matchings on $2n$ vertices as its two cop layers. That is, \[C_1= \{(2i-1)(2i) : i \in [n]\}, \quad \text{ and }\quad C_2= \{(2n)1\}\cup \{(2i)(2i+1) : i \in [n-1]\}.\] Since any cop can only patrol at most two vertices, if there are at most $n-1$ cops then there must be at least one vertex no cop can reach, thus in this example the multi-layer cop number is $n=|V|/2$. \end{remark}
	
	We begin in Sections \ref{sec:mlexc} and \ref{sec:mlds} by generalising two tools for bounding the cop number of graphs to the setting of multi-layer graphs; $(1,k)$-existentially closed graphs and bounds by domination number. We then apply these tools to determine the extremal multi-layer cop number of cliques and binomial random graphs in Section \ref{sec:clique} and \ref{sec:RG} respectively. Finally, in Section \ref{sec:trw}, we show that the extremal multi-layer cop number of a graph is bounded from above by its treewidth.

	\subsection{Multi-Layer Generalisation of Existential Closure} \label{sec:mlexc}
	
	The ``$(1,k)$-existentially closed'' property was introduced in \cite[Page 427]{BonatoPW07} for proving lower bounds on the cop number of a random graph.
	Given a positive integer $k$, a graph $G$ is \emph{$(1,k)$-existentially closed} if for each $k$-element subset $S \subseteq V(G)$, and vertex $u\not\in S$, there is a vertex $z \not\in S$ adjacent to $u$, which is not adjacent to any vertex in $S$. This definition is useful for lower bounding the cop number as if a robber is at any vertex $u$ of a $(1,k)$-existentially closed graph, then in the next step they can always move to a vertex $z$ that is safe for at least one more step. We adapt this to the multi-layer setting and use it to obtain lower bounds for the complete graph and the binomial random graph.
	
	For integers $\layers, k > 0$ we say that a multi-layer graph $(V,\{C_1, \dots, C_\layers\}, R)$ is $(1,k)$-multi-layer existentially closed (or $(1, k)$-m.e.c.)
	if for any subsets $S_{1}, \dots, S_{\layers} \subset V$, where $\sum_{i=1}^\layers |S_{i}| =  k$, we have $S_1\cup \cdots \cup S_\layers \neq V$, and for any $v \notin S_{1}\cup \cdots\cup S_{\layers}$
	there exists $x\in V\backslash (\{v\}\cup S_{1}\cup \cdots\cup S_{\layers})$ such that $vx \in R$ and $s_{i}x\not\in C_{i}$ for any $s_{i} \in S_{i}$ and $i\in [\layers]$. 
	
	As is the case for classical cops and robbers, we get a lower bound on the multi-layer cop number almost immediately from the definition.

	\begin{lemma} \label{lem:mec}
		If a multi-layer graph $\mlg$ is  $(1, k)$-m.e.c., then $\mcop{\mlg}> k$. 
	\end{lemma}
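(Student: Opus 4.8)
The plan is to adapt the classical argument that a $(1,k)$-existentially closed graph has cop number greater than $k$. It suffices to exhibit, for \emph{every} allocation $\alloc = (k_1,\dots,k_\layers)$ with $\sum_i k_i = k$, a winning strategy for the robber on $\mlg$ under this allocation; then $(\mlg,k)$ is a no-instance of \mlcr, which is exactly $\mcop{\mlg} > k$. So fix an allocation.

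For any configuration of the game, let $S_i \subseteq V$ be the set of vertices currently occupied by cops on layer $C_i$, so $|S_i| \le k_i$ and $\sum_i |S_i| \le k$. Whenever $\sum_i |S_i| < k$, I enlarge these to sets $S_i' \supseteq S_i$ with $\sum_i |S_i'| = k$ (possible since a $(1,k)$-m.e.c.\ graph has more than $k$ vertices), noting that adding vertices to the cops' ``footprint'' only makes the robber's task harder; so I may assume $\sum_i |S_i| = k$ and invoke the defining property directly. The invariant the robber maintains is: \emph{immediately after each of the robber's turns (its initial placement, or a move), the robber occupies a vertex $v$ with $v \notin \bigcup_i S_i$ and $sv \notin C_i$ for every $i$ and every $s \in S_i$}; in words, $v$ is unoccupied and no cop can move onto it next turn. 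This invariant makes capture impossible: after the robber's turn it shares a vertex with no cop, and after the ensuing cop turn it still does, since no cop could reach $v$.

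Both establishing and preserving the invariant use the m.e.c.\ property in the same way. At the start, the cops choose positions, giving sets $S_i$; by the property $\bigcup_i S_i \ne V$, so we may pick $u \notin \bigcup_i S_i$ and apply the property with centre $u$ to obtain $x \notin \{u\} \cup \bigcup_i S_i$ with $sx \notin C_i$ for all $i$ and $s \in S_i$. The robber starts on $x$, satisfying the invariant. Now suppose the invariant holds with the robber at $v$, and the cops take their turn, moving to new positions with sets $S_i$. Because $v$ was safe against exactly these moves, $v$ is still unoccupied, i.e.\ $v \notin \bigcup_i S_i$, so the property applies with centre $v$ and yields $x \notin \{v\} \cup \bigcup_i S_i$ with $vx \in R$ and $sx \notin C_i$ for all $i$ and $s \in S_i$. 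The robber moves along the edge $vx$ of $R$ to $x$, restoring the invariant. Iterating forever, the robber is never caught, so the cops lose under this allocation.

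The argument is almost entirely bookkeeping; the one point requiring care is the timing — the robber must always occupy a vertex that is safe against the cops' \emph{next} move, not merely an unoccupied one — and this is exactly what the ``$sx \notin C_i$'' clause of the m.e.c.\ definition is for. A pleasant consequence is that the initial placement is handled identically to a subsequent move, using a ``virtual'' previous position $u$. The only genuinely technical step is the padding of the cops' position sets to total exactly $k$, which is routine. Since the allocation was arbitrary, $(\mlg,k)$ is a no-instance of \mlcr, and therefore $\mcop{\mlg} > k$.
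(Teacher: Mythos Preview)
Your proof is correct and follows essentially the same approach as the paper's: both argue that from any uncaptured position the robber can invoke the $(1,k)$-m.e.c.\ property to step to a vertex that no cop occupies or can reach in one move, and iterate. Your version is more careful on two bookkeeping points that the paper leaves implicit --- padding the cops' position sets so that $\sum_i |S_i| = k$ exactly (needed because the definition is stated for equality), and handling the initial placement via a ``virtual'' centre $u$ --- but the underlying idea is identical.
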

	\begin{proof}
		If the robber is at any vertex $v$ at any time before capture then, since $\mlg$ is $(1, k)$-m.e.c., for any placement of $k$ cops in $V\backslash \{v\}$ onto any layers (where the set of layer $i$ cops is $S_i$), there always exists a vertex $x\notin \{v\}\cup S_1\cup \cdots \cup S_\layers$ adjacent to $v$ that is not adjacent to any cop. Thus, in the next step the robber at $v$ can move to $x$ and since this $x$ is not adjacent to any cop in any layer the robber is safe from all cops in that step. Continuing inductively the robber can avoid capture indefinitely. 
	\end{proof}
	
	We use the above definition and lemma to give a lower bound on the multi-layer cop number when the robber layer contains all edges (e.g.,~the flattened graph is the complete graph $K_n$).
	For this, we define $N_{C_i}[v]= \{w \mid vw \in C_i\}\cup\{v\}$ to be the closed neighbourhood of $v$ in $C_i$.
	
	\begin{lemma}\label{lem:1mec}Let $\layers,k>0$ be integers, $\mlg=(V,\{C_1, \ldots, C_\layers\}, \binom{V}{2})$, and $k<|V|$. If for any  $S_{1}, \dots, S_{\layers} \subset V $  with $\sum_{i=1}^\layers |S_{i}|= k$ and $u\notin  \cup_{i\in[\layers]}S_i$, we have $\left|\{u\}\cup \bigcup_{i=1}^{\layers}\bigcup_{v\in S_{i}}N_{S_i}[v] \right| < n $, then $\mcop{\mlg}> k$.
	\end{lemma}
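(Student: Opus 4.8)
The plan is to derive this lemma directly from Lemma~\ref{lem:mec}: it suffices to show that the stated counting hypothesis forces $\mlg$ to be $(1,k)$-m.e.c. So first I would fix arbitrary subsets $S_1,\dots,S_\layers\subset V$ with $\sum_{i=1}^{\layers}|S_i|=k$. Since $|S_1\cup\cdots\cup S_\layers|\le k<n$, the first requirement in the definition of $(1,k)$-m.e.c. is immediate: $S_1\cup\cdots\cup S_\layers\ne V$, and in particular there is at least one vertex outside this union, so the hypothesis of the lemma is applied non-vacuously.

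Next, fix any $u\notin S_1\cup\cdots\cup S_\layers$; this $u$ plays the role of the robber's current position in the m.e.c. definition. Consider the ``forbidden'' set
\[
F \;:=\; \{u\}\cup \bigcup_{i=1}^{\layers}\bigcup_{w\in S_i} N_{C_i}[w],
\]
(reading the subscript in the lemma statement as the closed neighbourhood $N_{C_i}$ defined just above it), which gathers the robber's own vertex together with every cop and every vertex reachable by some cop in $S_i$ in a single move along layer $C_i$. By hypothesis $|F|<n$, so there exists $x\in V\setminus F$. I would then verify that this $x$ witnesses the existential-closure condition: $x\ne u$; $x\notin S_1\cup\cdots\cup S_\layers$, since each $S_i\subseteq\bigcup_{w\in S_i}N_{C_i}[w]\subseteq F$; and for every $i\in[\layers]$ and every $s_i\in S_i$ we have $s_ix\notin C_i$, since otherwise $x\in N_{C_i}[s_i]\subseteq F$. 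Finally, because the robber layer of $\mlg$ is $R=\binom{V}{2}$, the edge $ux$ is automatically present in $R$, so the remaining clause $ux\in R$ holds trivially.

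Having checked both parts of the definition, $\mlg$ is $(1,k)$-m.e.c., and Lemma~\ref{lem:mec} gives $\mcop{\mlg}>k$, as required. Once the definitions are unwound, this is essentially a one-line pigeonhole argument, so I do not expect a genuine obstacle; the only points demanding a little care are the bookkeeping that $S_i$ together with the one-step neighbourhoods in layer $C_i$ is exactly $\bigcup_{w\in S_i}N_{C_i}[w]$, and the observation that the complete robber layer renders the adjacency requirement $ux\in R$ automatic (this is precisely why the hypothesis only needs to control the layer-$C_i$ neighbourhoods of the cops, not any property of $R$).
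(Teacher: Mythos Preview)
Your proposal is correct and follows essentially the same approach as the paper: both verify that $\mlg$ is $(1,k)$-m.e.c.\ by observing that the hypothesis leaves at least one vertex $x$ outside the robber's position and all layer-$C_i$ closed neighbourhoods of the cops, then use $R=\binom{V}{2}$ to get $ux\in R$ for free, and conclude via Lemma~\ref{lem:mec}. Your write-up is simply a more carefully unpacked version of the paper's two-sentence argument.
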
 
	\begin{proof}
		Since for any choice of the cop positions the set of vertices either occupied by the robber, occupied by a cop, or adjacent to a cop in some layer is at most $n-1$, there must be some vertex $x\in V$ for which none of these things hold. Thus, as the robber has all edges available (i.e.\ $R=\binom{V}{2}$), we see that $\mlg$ is $(1, k)$-m.e.c., and thus the result holds by Lemma \ref{lem:mec}.
	\end{proof}
	\subsection{Multi-layer Dominating Set}\label{sec:mlds}
	Let $\mlg = (V, \{E_1, \ldots, E_\layers\})$ be a multi-layer graph (without designated cop/robber layers). A \textit{multi-layer dominating set} in $\mlg$ is a set $D\subseteq V \times \{1,\ldots,\layers\}$ of vertex-layer pairs such that for every $v\in V$, either $(v,i)\in D$ for some $i$, or there is a $(w,i)\in D$ such that $w\in V$ and $vw\in E_i$. We define the \textit{domination number} $\gamma(\mlg)$ of $\mlg$ to be the size of a smallest multi-layer dominating set in $\mlg$. Note that if $\mlg$ has a single-layer this definition aligns with the traditional notion of dominating set, which justifies the overloaded notation. It is a folklore result that the cop number is at most the size of any dominating set in the graph, this also holds in the multi-layer setting.  
	
	\begin{lemma}\label{lem:copdom}
		Let $\mlg:=(V, \{E_1, \dots, E_\layers\})$ be any multi-layer graph and $\mlg':=(V,  \{E_1, \dots, E_\layers\},\binom{V}{2})$. Then, $\mcop{\mlg'} \leq \gamma(\mlg) $.  
	\end{lemma}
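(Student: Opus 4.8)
The plan is to mimic the folklore single-layer argument: a dominating set yields a placement of cops from which the robber is caught on the cop player's very first move. Let $D\subseteq V\times[\layers]$ be a minimum multi-layer dominating set of $\mlg$, so $|D|=\gamma(\mlg)$. Define an allocation $\alloc=(k_1,\dots,k_\layers)$ by letting $k_i$ be the number of pairs in $D$ with second coordinate $i$; then $\sum_{i\in[\layers]}k_i=|D|=\gamma(\mlg)$. I claim $(\mlg',\alloc)$ is a yes-instance of \mlcralloc, which immediately gives that $(\mlg',\gamma(\mlg))$ is a yes-instance of \mlcr and hence $\mcop{\mlg'}\le\gamma(\mlg)$.

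The cop strategy is as follows. The cop player places, for each pair $(v,i)\in D$, one cop on vertex $v$ in layer $E_i$ (this is exactly the allocation $\alloc$). The robber player then picks some starting vertex $u\in V$. By the definition of a multi-layer dominating set, either $(u,i)\in D$ for some $i$ — in which case a cop already occupies $u$ and the robber is caught before the game even properly begins — or there is a pair $(w,i)\in D$ with $uw\in E_i$. In the latter case, recall that play proceeds with the cop player moving first; on this first turn the cop sitting on $w$ in layer $E_i$ moves along the edge $uw$ (which lies in its own layer $E_i$) onto $u$, capturing the robber, who has not yet had a chance to move.

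The one point worth underlining is the turn order: it is precisely because the cop player moves immediately after the robber is placed that the single-step capture is legitimate. Consequently the robber layer is irrelevant to the argument — the game terminates before the robber makes any move — so the choice $R=\binom{V}{2}$ in the statement of $\mlg'$ is only there to make the bound as strong as possible (by Proposition~\ref{prop:subsetrobber} any smaller robber layer can only help the cops). There is no genuine obstacle here; the whole content is packaging the multi-layer domination condition into a legal cop allocation and a one-move capture, so I expect the proof to be only a few lines long.
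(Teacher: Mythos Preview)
Your proof is correct and follows exactly the same approach as the paper: place one cop at $v$ in layer $E_i$ for each $(v,i)$ in a minimum multi-layer dominating set, and observe that wherever the robber starts it is either already occupied or adjacent in some layer $E_i$ to a cop in that layer, so capture happens on the cops' first move. The paper's version is just a terser rendering of the same argument.
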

	
	\begin{proof}Let $D$ be any multi-layer dominating set of size $|D|=\gamma(\mlg)$, and for each $(v,i)\in D$ place one cop in layer $i$ at the vertex $v$. The result now follows as if the robber is at an any vertex then they are adjacent to a cop in some layer and so the robber will be caught after the cops first move. \end{proof}

	We now introduce the parameter $\delta(\mlg)$ which is an analogue of minimum degree for a multi-layer graph $\mlg =(V,\{E_1,\dots, E_\layers\})$. This is given by  \begin{equation}\label{eq:deltamlg}\delta(\mlg) := \min_{v\in V} \sum_{i\in[\layers]} d_{(V,E_i)}(v).\end{equation}
	
	Using this notion we prove a bound on the domination number of a multi-layer graph based on a classic application of the probabilistic method \cite[Theorem 1.2.2]{AlonSpencer}. 	\begin{proposition}\label{lem:domnum}
		Let $\mlg=(V, \{E_1, \dots, E_\layers\})$ be any multi-layer graph. 
		Then, \[\gamma(\mlg)\leq   \frac{n\layers}{\layers + \delta(\mlg)}\cdot  \left(  \ln\left(\frac{\layers + \delta(\mlg)}{\layers}\right)+ 1 \right) .\]
	\end{proposition}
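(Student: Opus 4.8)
The plan is to adapt the standard probabilistic bound on the domination number (\cite[Theorem 1.2.2]{AlonSpencer}) from vertices to vertex--layer pairs. The first step is to count, for a fixed $v\in V$, how many pairs $(w,i)\in V\times[\layers]$ dominate $v$: these are the $\layers$ pairs $(v,i)$ for $i\in[\layers]$, together with $(w,i)$ for each $i\in[\layers]$ and each neighbour $w$ of $v$ in $(V,E_i)$, and since each $(V,E_i)$ is loopless these are pairwise distinct. Hence the number of dominating pairs of $v$ equals $\layers+\sum_{i\in[\layers]}d_{(V,E_i)}(v)$, which by \eqref{eq:deltamlg} is at least $\layers+\delta(\mlg)$.

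Next I would fix $p\in[0,1]$ (to be optimised) and let $X\subseteq V\times[\layers]$ be obtained by including each of the $n\layers$ pairs independently with probability $p$, so that $\Ex{|X|}=n\layers p$ where $n=|V|$. Let $Y\subseteq V$ be the set of vertices dominated by no pair of $X$. Since each of the (at least) $\layers+\delta(\mlg)$ pairs dominating a given $v$ is absent from $X$ independently with probability $1-p$, we have $\Pr{v\in Y}\le(1-p)^{\layers+\delta(\mlg)}\le e^{-p(\layers+\delta(\mlg))}$, and therefore $\Ex{|Y|}\le n e^{-p(\layers+\delta(\mlg))}$. As $X\cup\{(v,1):v\in Y\}$ is a multi-layer dominating set, linearity of expectation produces an outcome in which
\[\gamma(\mlg)\le|X|+|Y|\le n\layers p+n e^{-p(\layers+\delta(\mlg))}.\]

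Finally I would minimise the right-hand side over $p$. Setting $m=\layers+\delta(\mlg)\ge\layers\ge1$, the function $p\mapsto n\layers p+ne^{-pm}$ is convex with minimiser $p^\star=\tfrac1m\ln(m/\layers)$, which lies in $[0,1]$ because $0\le\ln(m/\layers)\le m/\layers-1\le m$ (using $\layers\ge1$ and $\delta(\mlg)\ge0$); substituting $p^\star$ gives value $\tfrac{n\layers}{m}\bigl(\ln(m/\layers)+1\bigr)$, which is exactly the claimed bound $\tfrac{n\layers}{\layers+\delta(\mlg)}\bigl(\ln\tfrac{\layers+\delta(\mlg)}{\layers}+1\bigr)$. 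The degenerate case $\delta(\mlg)=0$ simply recovers the trivial bound $\gamma(\mlg)\le n$ at $p^\star=0$.

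There is no genuinely hard step in this argument; the only points that require care are getting the exponent right — namely that every vertex has at least $\layers+\delta(\mlg)$ dominating pairs, not merely $\delta(\mlg)$, which is what allows the $+\layers$ inside both the denominator and the logarithm — and checking that the optimal $p^\star$ is a valid probability so that the random construction is well defined.
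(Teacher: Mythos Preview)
Your argument is correct and is essentially identical to the paper's proof: both sample each vertex--layer pair independently with probability $p$, bound the probability a vertex is undominated by $(1-p)^{\layers+\delta(\mlg)}\le e^{-p(\layers+\delta(\mlg))}$, add the undominated vertices back in, and optimise $p$ to the same value. If anything, your write-up is slightly more careful in explicitly verifying that the optimiser $p^\star$ lies in $[0,1]$.
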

	\begin{proof}We select a random subset $D\subseteq  V\times[\layers]$ by including each  element $(v,i)\in V\times[\layers]$ with probability $p$. Let $Y= V\backslash \bigcup_{(v,i)\in D} N_{(V,E_i)}[v]$ be the set of vertices not dominated by $D$. Thus if $Y'\subseteq  V\times[\layers]$ is a set formed by taking each element of $y\in Y$ and assigning it to an arbitrary layer then $D\cup Y'$ is a multi-layer dominating set. Recall that $1 -p \leq  e^{-p}$ for $p\geq 0$, and observe that each vertex $v\in V$ is in $Y$ with probability at most 
		\[(1-p)^{\layers}\cdot (1-p)^{d_{(V,E_1)}(v)}\cdots(1-p)^{d_{(V,E_\layers)}(v)} = (1-p)^{\layers + \sum_{i\in[\layers]} d_{(V,E_i)}(v)}\leq (1-p)^{\layers + \delta(\mathcal{G})} \leq e^{-p\cdot(\layers + \delta(\mathcal{G}) )}.  \]Thus by linearity of expectation and disjointness of $D$ and $Y'$ we have \begin{equation}\label{eq:Ebdd}\mathbb{E}[|D\cup Y'|] = \mathbb{E}[|D|]+ \mathbb{E}[|Y'|]\leq  n\layers p + ne^{-p\cdot(\layers + \delta(\mathcal{G}) )}.\end{equation} 
		If we set $ p = \ln\left(\frac{\layers + \delta(\mathcal{G})}{\layers}\right)/ (\layers + \delta(\mathcal{G}))$ then \eqref{eq:Ebdd} gives \[\mathbb{E}[|D\cup Y'|] \leq  n \layers \cdot \frac{\ln\left(\frac{\layers +\delta(\mathcal{G})}{\layers}\right)}{\layers +\delta(\mathcal{G})} + n\cdot e^{ - \ln\left(\frac{\layers +\delta(\mathcal{G})}{\layers}\right) } =\frac{n\layers}{1 + \delta(\mathcal{G})}\cdot  \left(  \ln\left(\frac{\layers + \delta(\mathcal{G})}{\layers}\right)+ 1 \right)   . \]  So there must exist a multi-layer dominating set in $\mlg$ with at most this many vertices, giving the result. 
	\end{proof}
	
	Note that there are at least two other sensible definitions of minimum degree for multi-layer graphs, namely the minimum degree of each layer, i.e.\ $\min_{i\in [\layers]}\min_{v\in V}d_{(V,E_i)}(v)$, and minimum number of neighbours within any layer, given by $\delta(\flatten{\mlg})$. Our definition of $\delta(\mlg)$ above in \eqref{eq:deltamlg} can be thought of as the ``minimum number of edges incident in any layer'', this is arguably a less natural notion than $\delta(\flatten{\mlg})$ however it gives a better bound in our application (Proposition \ref{lem:domnum}), the following simple result gives an indication of why. 
	
	\begin{proposition}\label{prop:multmin>flatmin} For any multi-layer graph $\mlg$ we have $\delta(\flatten{\mlg})\leq \delta(\mlg) $. 
	\end{proposition}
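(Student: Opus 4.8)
The plan is to establish the inequality pointwise over vertices and then pass to the minimum. First I would fix an arbitrary vertex $v\in V$ and note that, since $\flatten{\mlg}=(V,E_1\cup\cdots\cup E_\layers)$, the neighbourhood of $v$ in the flattened graph is precisely the union of its neighbourhoods in the individual layers, $N_{\flatten{\mlg}}(v)=\bigcup_{i\in[\layers]}N_{(V,E_i)}(v)$. Subadditivity of cardinality (the union bound) then gives
\[
d_{\flatten{\mlg}}(v)=\Bigl|\bigcup_{i\in[\layers]}N_{(V,E_i)}(v)\Bigr|\le\sum_{i\in[\layers]}\bigl|N_{(V,E_i)}(v)\bigr|=\sum_{i\in[\layers]}d_{(V,E_i)}(v).
\]

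The second step is simply to take minima correctly. Let $w\in V$ be a vertex attaining the minimum in the definition of $\delta(\mlg)$, so that $\delta(\mlg)=\sum_{i\in[\layers]}d_{(V,E_i)}(w)$. Applying the displayed bound at $v=w$ yields
\[
\delta(\flatten{\mlg})=\min_{v\in V}d_{\flatten{\mlg}}(v)\le d_{\flatten{\mlg}}(w)\le\sum_{i\in[\layers]}d_{(V,E_i)}(w)=\delta(\mlg),
\]
which is the claimed inequality.

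There is no genuine obstacle in this argument; the only point worth care is that the pointwise inequality need only be evaluated at the vertex minimising the right-hand side, rather than simultaneously at the (possibly different) minimiser of the left-hand side. It is also worth remarking that the inequality can be strict exactly when a neighbour of $w$ is shared between two or more layers, so that the union in the first step is a proper subset of the disjoint sum — this is precisely the gap that makes $\delta(\mlg)$ the sharper quantity for Proposition~\ref{lem:domnum}.
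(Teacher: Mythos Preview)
Your proof is correct and follows essentially the same approach as the paper: establish the pointwise inequality $d_{\flatten{\mlg}}(v)\le\sum_{i\in[\layers]}d_{(V,E_i)}(v)$ for every vertex (the paper phrases this as ``any edge in $\flatten{\mlg}$ may appear in more than one distinct layer''), then pass to the minimum. Your version is slightly more careful in spelling out the union-of-neighbourhoods reasoning and in handling the minimisation step, but the argument is the same.
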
\begin{proof}
		Observe that any edge in $\flatten{\mlg}$ may appear in more than one distinct layer of $\mlg$. Thus, for any $v\in V$, 
		\begin{equation*} d_{\mathsf{fl}(\mlg)}(v) \leq \sum_{i\in [\layers]}d_{(V,E_i)}(v).\end{equation*}Since this holds for all $v\in V$, the result follows by taking the minimum over all $v\in V$ on both sides.  \end{proof}

	\subsection{Complete Graph}\label{sec:clique}
	The aim of this section is to prove the following result. \begin{theorem}\label{thm:complete}
		Let $n\geq 1$ and $1\leq \layers< \lfloor \frac{n}{2}\rfloor $ be integers. Then, $\lceil \frac{\layers}{10}\rceil \leq \maxcop{\layers}{K_{n}}\leq \layers $.     
	\end{theorem}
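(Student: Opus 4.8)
The plan is to prove the two inequalities separately; the upper bound is a one‑line strategy, while the lower bound combines the multi‑layer existential‑closure machinery of Section~\ref{sec:mlexc} with a Hamilton decomposition of $K_n$.

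\textbf{Upper bound.} For $\maxcop{\layers}{K_n}\le\layers$ I would show that on any $\mlg=(V,\{C_1,\dots,C_\layers\},*)\in\mathcal{L}(K_n,\layers)$ the cop player wins with one cop per layer. Fix an arbitrary vertex $v\in V$ and place cop $i$ on $v$ in layer $C_i$ for each $i\in[\layers]$. The robber must choose a start vertex $u\neq v$; since $uv$ is an edge of $K_n=C_1\cup\cdots\cup C_\layers$, it lies in some layer $C_i$, and cop $i$ captures the robber by moving along $uv$. Equivalently, $\{(v,i):i\in[\layers]\}$ is a multi‑layer dominating set of $(V,\{C_1,\dots,C_\layers\})$, so its domination number is at most $\layers$ and Lemma~\ref{lem:copdom} applies. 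Taking the maximum over $\mathcal{L}(K_n,\layers)$ gives the claim.

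\textbf{Lower bound.} Set $k:=\lceil\layers/10\rceil-1$. If $k=0$ there is nothing to prove, since every $\mlg\in\mathcal{L}(K_n,\layers)$ has connected (hence non‑empty) cop layers and therefore $\mcop{\mlg}\ge 1=\lceil\layers/10\rceil$; so assume $\layers\ge 11$ and $k\ge1$ (note $\layers<\lfloor n/2\rfloor$ then forces $n>2\layers$). I would exhibit a single $\mlg\in\mathcal{L}(K_n,\layers)$ that is $(1,k)$-m.e.c., whence Lemma~\ref{lem:mec} gives $\mcop{\mlg}>k$, i.e.\ $\mcop{\mlg}\ge\lceil\layers/10\rceil$, and $\maxcop{\layers}{K_n}\ge\mcop{\mlg}$ finishes it. For the construction, use the classical fact that $K_n$ decomposes into $\lfloor(n-1)/2\rfloor$ edge‑disjoint Hamilton cycles when $n$ is odd, and into $n/2-1$ edge‑disjoint Hamilton cycles together with one perfect matching $M$ when $n$ is even. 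Since $\layers<\lfloor n/2\rfloor$ there are at least $\layers$ Hamilton cycles available, so I can partition them into $\layers$ non‑empty groups as evenly as possible and let $C_i$ be the union of the cycles in group $i$, putting all edges of $M$ into $C_1$ in the even case. Each $C_i$ contains a spanning Hamilton cycle, so it is connected and spanning, and $\bigcup_i C_i=E(K_n)$; hence $\mlg:=(V,\{C_1,\dots,C_\layers\},*)$ lies in $\mathcal{L}(K_n,\layers)$ and its robber layer is $\binom{V}{2}$.

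It then remains to verify the hypothesis of Lemma~\ref{lem:1mec}. Each group contains at most $\lceil\lfloor(n-1)/2\rfloor/\layers\rceil\le(n-1)/(2\layers)+1$ Hamilton cycles, each $2$-regular, so $\Delta:=\max_i\Delta(C_i)\le 2\big((n-1)/(2\layers)+1\big)+1<n/\layers+3$, the extra $+1$ accounting for $M$ in $C_1$. Consequently, for any $S_1,\dots,S_\layers\subset V$ with $\sum_i|S_i|=k$ and any $u\notin\bigcup_iS_i$,
\[
\Big|\{u\}\cup\bigcup_{i=1}^{\layers}\bigcup_{v\in S_i}N_{C_i}[v]\Big|\ \le\ k(\Delta+1)+1\ <\ \frac{\layers}{10}\cdot\Big(\frac{n}{\layers}+4\Big)+1\ =\ \frac{n}{10}+\frac{2\layers}{5}+1,
\]
and the right‑hand side is less than $n$ since $\layers<n/2$ gives $4\layers+10<2n+10\le 9n$ for $n\ge 2$. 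Lemma~\ref{lem:1mec} then shows $\mlg$ is $(1,k)$-m.e.c., completing the lower bound.

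\textbf{Main obstacle.} The argument contains no deep step; the only care needed is (i) quoting the Hamilton decomposition of $K_n$ correctly — in particular handling the leftover perfect matching when $n$ is even while keeping every layer connected — and (ii) controlling the rounding in the degree estimate so that $k(\Delta+1)+1<n$ holds uniformly over $1\le\layers<\lfloor n/2\rfloor$. The constant $10$ is chosen generously precisely so that this inequality goes through with room to spare rather than via a delicate case analysis; a smaller constant is surely attainable at the cost of more careful bookkeeping.
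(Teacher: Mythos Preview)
Your proposal is correct and follows essentially the same approach as the paper. The upper bound is identical, and for the lower bound both arguments build connected, almost-regular spanning layers of $K_n$ with maximum degree $O(n/\layers)$ and then invoke Lemma~\ref{lem:1mec}; the only difference is that you quote the Hamilton decomposition of $K_n$ as a black box (grouping $2$-regular Hamilton cycles, with the leftover matching absorbed into $C_1$ for even $n$), whereas the paper builds the layers explicitly from the Soifer $1$-factorisation of $K_{2\ell}$ by merging consecutive perfect matchings and then proving that any two consecutive matchings union to a Hamilton cycle. These are two presentations of the same classical Walecki construction, so the difference is cosmetic; your route is marginally slicker since connectivity of each layer is immediate from containing a spanning Hamilton cycle, while the paper pays a paragraph to verify it.
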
 

	It is the lower bound of Theorem \ref{thm:complete} which is the non-trivial direction. The idea is to partition the edges of the complete graph into almost-regular graphs with no overlapping edges. If we take each of these graphs as a cop layer then we can show that the resulting multi-layer graphs is $(1,k)$-multi-layer existentially closed, the result then follows from Lemma \ref{lem:1mec}. An illustration of the edge partition is given in Figure \ref{fig:cliqueconstruction}.

	\begin{proof}[Proof of Theorem \ref{thm:complete}]For the upper bound observe that we can choose any vertex $v$ of the clique and place one cop within each of the $\layers$ layers at the vertex $v$. Since any other vertex $u$ is adjacent to $v$ in at least one layer we can guard every vertex of $K_n$. 
		
		The proof of the lower bound will be based on building cop layers using an elegant $(2n-1)$-colouring of $K_{2n}$ from Sofier's book \cite[Problem 16.4]{Sofier}, merging colour classes, and adding a single extra vertex in the case of odd $n$. We first state the colouring from \cite[Problem 16.4]{Sofier}, which Sofier in turn attributes to \cite{BCL}:

		\begin{idea}\label{eq:SofierColouring}
			\textit{($2\ell-1$)-Colouring of $K_{2\ell}$:} Let $V(K_{2\ell })= \{0,1,\dots, 2\ell-1\}$. Arrange the vertices $0 ,\dots , 2\ell-2$ as the vertices of a regular
			$(2\ell - 1)$-gon, and place $2\ell- 1$ in its center.
			For each $i\in \{0, \ldots, 2\ell-2\}$, assign colour $c_i$ to the edge $(2\ell-1)(i)$ and also assign colour $c_i$ to all edges that are perpendicular to $(2\ell-1)(i)$ (i.e., for $j\in[\ell-1]$ assign colour $c_i$ to all edges of the form $(i-j \bmod 2\ell-1)(i+j \bmod 2\ell-1)$).
		\end{idea}
		Using this we now describe how to partition $K_n$ in to $1\leq \layers<\left\lfloor \frac{n}{2} \right\rfloor$ layers, for odd or even $n$.  
		\begin{idea}\label{eq:Partition}
			\textit{Partition of $K_n$ into $\layers$ layers:} For $n=2\ell$ take the ($2\ell-1$)-colouring of $K_{2\ell}$ given by \eqref{eq:SofierColouring}. Then, partition the integer interval $\mathcal{I}=\{1,\dots,2\ell -1 \}$ into $\layers$ disjoint sub-intervals $\mathcal{I}_1, \dots, \mathcal{I}_\layers $ of consecutive integers, each containing either $\lfloor\frac{2\ell -1}{\layers} \rfloor $ or $\lceil\frac{2\ell -1}{\layers} \rceil $ integers. Now, for each $i\in [\layers]$, identify each of the colour classes $c_j$, where $j\in \mathcal{I}_i$, as a single cop layer $C_i$.
			For the case when $n = 2\ell+1$ is odd we first partition $K_{2\ell}$ into $\layers$ layers as in the even case, and then add vertex $2\ell$ as well as edges $(2\ell)(i)$ for $i\in[2\ell-1]$ to the $\layers$ layers in any way such that vertex $2\ell$ is incident to either $\lfloor\frac{2\ell}{\layers} \rfloor $ or $\lceil\frac{2\ell}{\layers} \rceil $ edges in each layer.
		\end{idea}

		See Figure \ref{fig:cliqueconstruction} for an illustration of the construction. We now prove the following claim. 
		\begin{claim}\label{clm:partition}
			For any $n\geq 1$ and $1\leq \layers< \lfloor n/2\rfloor $ the partition \eqref{eq:Partition} gives a set of connected $n$-vertex graphs $\{G_i=(V, C_i)\}_{i\in [\layers]}$ satisfying $K_n=G_1\cup  \dots \cup  G_\layers$ and $\max_{i\in [\layers]}\max_{v\in V}d_{G_i}(v)\leq \lceil  {n/\layers} \rceil$.   
		\end{claim}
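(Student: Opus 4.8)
The plan is to verify the three properties asserted of the layers produced by \eqref{eq:Partition} one at a time: (a) the layers cover all edges, i.e.\ $K_n = G_1\cup\dots\cup G_\layers$; (b) $\max_{v\in V}d_{G_i}(v)\le \lceil n/\layers\rceil$ for every $i$; and (c) each $G_i$ is connected. Properties (a) and (b) will come essentially for free from the structure of Sofier's colouring \eqref{eq:SofierColouring}, and the real content is (c).

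First take $n=2\ell$ even. The colouring \eqref{eq:SofierColouring} is a proper edge-colouring of $K_{2\ell}$ with $2\ell-1$ colours --- a $1$-factorization --- and each colour class $c_i$ is a perfect matching on $V$: identifying the $2\ell-1$ polygon vertices (the ``rim'') with $\mathbb{Z}_{2\ell-1}$ and calling vertex $2\ell-1$ the centre, the class $c_i$ matches the rim vertex $i$ to the centre and pairs the remaining rim vertices as $(i-j,i+j)$ for $j\in[\ell-1]$, and a one-line check shows these $\ell$ edges form a perfect matching on the $2\ell$ vertices. Hence every edge of $K_{2\ell}$ gets exactly one colour, so $C_1\cup\dots\cup C_\layers=\binom{V}{2}$, which is (a). Since $G_i$ is the edge-disjoint union of the $|\mathcal{I}_i|$ perfect matchings indexed by $\mathcal{I}_i$, it is $|\mathcal{I}_i|$-regular, and $|\mathcal{I}_i|\le\lceil(2\ell-1)/\layers\rceil\le\lceil 2\ell/\layers\rceil=\lceil n/\layers\rceil$ gives (b). For $n=2\ell+1$ odd one first runs the even construction on $K_{2\ell}$ and then distributes the $2\ell$ edges at the new vertex $2\ell$ among the layers: (a) is then immediate, the new vertex has degree at most $\lceil 2\ell/\layers\rceil\le\lceil n/\layers\rceil$ in each layer by the chosen distribution, and every rim vertex gains at most one extra edge per layer, so (b) follows after checking how many of the new edges each layer can absorb without exceeding $\lceil n/\layers\rceil$.

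For (c) the useful fact is an algebraic description of a colour class: on the rim $\mathbb{Z}_{2\ell-1}$, the class $c_p$ acts as the involution $a\mapsto 2p-a$, except that its unique fixed point $a=p$ is joined to the centre instead. Consequently, for two colour classes whose indices differ by $1$, the composition $c_{p+1}\circ c_p$ is the translation $a\mapsto a+2$ on $\mathbb{Z}_{2\ell-1}$; since $2\ell-1$ is odd, $\gcd(2,2\ell-1)=1$, so iterating this translation visits every rim vertex. From this one sees that the rim part of $c_p\cup c_{p+1}$ is a single spanning path whose two endpoints are exactly $p$ and $p+1$ (the vertices joined to the centre) --- any extra cycle component would, by the translation structure, have length a multiple of $2(2\ell-1)$, which is impossible --- and since the centre is adjacent to both $p$ and $p+1$, the graph $c_p\cup c_{p+1}$ is a connected (indeed Hamiltonian) spanning subgraph of $K_{2\ell}$. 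Finally, the hypothesis $\layers<\lfloor n/2\rfloor$ forces $|\mathcal{I}_i|\ge\lfloor(2\ell-1)/\layers\rfloor\ge 2$ for every $i$, so --- indexing the colour classes cyclically by $\mathbb{Z}_{2\ell-1}$ --- every layer contains two colour classes with consecutive indices and hence contains such a connected spanning subgraph; thus each $G_i$ is connected. In the odd case the new vertex receives at least one edge in each layer (again because $\layers<\ell$), so connectivity is inherited.

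The step I expect to be the main obstacle is (c). A union of two perfect matchings is in general only a disjoint union of even cycles, so connectivity does not follow from the $1$-factorization alone; what makes it work here is the specific cyclic structure of \eqref{eq:SofierColouring} (the translation-by-$2$ identity for consecutive colour classes), together with the role of the centre vertex, which joins the two endpoints of the spanning rim path. Properties (a) and (b) are routine from the $1$-factorization, the only mild care being the placement of the new vertex's edges in the odd case.
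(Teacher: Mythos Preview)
Your proposal is correct and follows essentially the same approach as the paper: both verify covering and the degree bound directly from the $1$-factorization structure, and both establish connectivity by showing that any two \emph{consecutive} colour classes $c_p\cup c_{p+1}$ form a Hamiltonian spanning subgraph of $K_{2\ell}$, then invoke $\layers<\lfloor n/2\rfloor$ to guarantee each interval $\mathcal{I}_i$ contains at least two consecutive classes. The only cosmetic difference is that the paper writes out the Hamiltonian cycle explicitly, whereas you derive it from the algebraic identity $c_{p+1}\circ c_p=(a\mapsto a+2)$ on $\mathbb{Z}_{2\ell-1}$ together with $\gcd(2,2\ell-1)=1$; this is the same fact packaged slightly differently.
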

		
		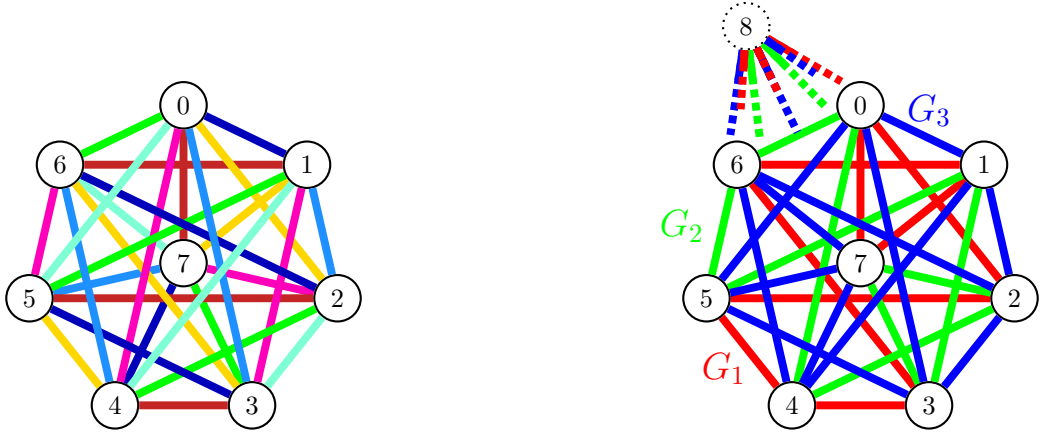
\begin{figure}
			\centering
			\begin{tikzpicture}[node/.style={thick,circle,draw=black,minimum size=.4cm,fill=white},dotnode/.style={thick,dotted,circle,draw=black,minimum size=.4cm,fill=white},edge0/.style={line width=3pt,col0}, edge1/.style={line width=3pt,col1},edge2/.style={line width=3pt,col2},edge3/.style={line width=3pt,col3},edge4/.style={line width=3pt,col4},edge5/.style={line width=3pt,col5},edge6/.style={line width=3pt,col6},edgea/.style={line width=3pt,cola},edgeb/.style={line width=3pt,colb},edgec/.style={line width=3pt,colc}]
				
				\begin{scope}[shift={(-4.5,0)},scale=.7]	\node[node] (7) at (0,0) {$7$};
					\def \radius {3cm}
					\foreach \s in {0,...,6}
					{
						\node[node] (\s) at ({360/7 * (- \s +  3 ) - 900/14 }:\radius)  {$\s$};;
					}
					
					\foreach \s in {0,...,6}
					{
						\def\a{\s +1}
						\draw[edge\s] ( \s    ) to (7);
					}
					
					\draw[edge0] (1) to (6);
					\draw[edge0] (2) to (5);
					\draw[edge0] (3) to (4);
					
					\draw[edge1] (0) to (2);
					\draw[edge1] (6) to (3);
					\draw[edge1] (5) to (4);
					
					\draw[edge2] (1) to (3);
					\draw[edge2] (0) to (4);
					\draw[edge2] (6) to (5);
					
					\draw[edge3] (4) to (2);
					\draw[edge3] (5) to (1);
					\draw[edge3] (6) to (0);	
					
					\draw[edge4] (3) to (5);
					\draw[edge4] (2) to (6);
					\draw[edge4] (1) to (0);	
					
					\draw[edge5] (4) to (6);
					\draw[edge5] (3) to (0);
					\draw[edge5] (2) to (1);
					
					\draw[edge6] (5) to (0);
					\draw[edge6] (4) to (1);
					\draw[edge6] (3) to (2);	
				\end{scope}
				
				\begin{scope}[shift={(4.5,0)},scale=.7]	\node[node] (7) at (0,0) {$7$};
					\def \radius {3cm}
					\foreach \s in {0,...,6}
					{
						\node[node] (\s) at ({360/7 * (- \s +  3 ) - 900/14 }:\radius)  {$\s$};;
					}
					
					\draw[edgea] (7) to (0);
					\draw[edgea] (1) to (6);
					\draw[edgea] (2) to (5);
					\draw[edgea] (3) to (4);
					
					\draw[edgea] (7) to (1);
					\draw[edgea] (0) to (2);
					\draw[edgea] (6) to (3);
					\draw[edgea] (5) to (4);
					
					\draw[edgeb] (7) to (2);
					\draw[edgeb] (1) to (3);
					\draw[edgeb] (0) to (4);
					\draw[edgeb] (6) to (5);
					
					\draw[edgeb] (7) to (3);
					\draw[edgeb] (4) to (2);
					\draw[edgeb] (5) to (1);
					\draw[edgeb] (6) to (0);	
					
					\draw[edgec] (7) to (4);
					\draw[edgec] (3) to (5);
					\draw[edgec] (2) to (6);
					\draw[edgec] (1) to (0);	
					
					\draw[edgec] (7) to (5);
					\draw[edgec] (4) to (6);
					\draw[edgec] (3) to (0);
					\draw[edgec] (2) to (1);

					\draw[edgec] (7) to (6);
					\draw[edgec] (5) to (0);
					\draw[edgec] (4) to (1);
					\draw[edgec] (3) to (2);

					\draw (-2,-2) node[anchor=east]{{\color{cola}\Large $G_1$}};
					\draw (-2.8,.7) node[anchor=east]{{\color{colb}\Large $G_2$}};
					\draw (1.9,2.9) node[anchor=east]{{\color{colc}\Large $G_3$}};
					\node[dotnode] (8) at ({360/7 * (3.5 ) - 900/14 }:5cm)  {$8$};
					
					\draw[] (8) edge [line width=3pt,colc] ($(8)!0.2!(5)$) edge [dotted,line width=3pt,colc] ($(8)!0.40!(5)$);
					\draw[] (8) edge [line width=3pt,cola] ($(8)!0.2!(1)$) edge [dotted,line width=3pt,cola] ($(8)!0.40!(1)$);
					\draw[] (8) edge [line width=3pt,colb] ($(8)!0.15!(4)$) edge [dotted,line width=3pt,colb] ($(8)!0.30!(4)$);	
					\draw[] (8) edge [line width=3pt,colc] ($(8)!0.14!(3)$) edge [dotted,line width=3pt,colc] ($(8)!0.28!(3)$);
					\draw[] (8) edge [line width=3pt,cola] ($(8)!0.14!(7)$) edge [dotted,line width=3pt,cola] ($(8)!0.28!(7)$);
					\draw[] (8) edge [line width=3pt,colb] ($(8)!0.15!(2)$) edge [dotted,line width=3pt,colb] ($(8)!0.30!(2)$);
					\draw[] (8) edge [line width=3pt,cola] ($(8)!0.3!(6)$) edge [dotted,line width=3pt,cola] ($(8)!0.60!(6)$);
					\draw[] (8) edge [line width=3pt,colc] ($(8)!0.3!(0)$) edge [dotted,line width=3pt,colc] ($(8)!0.60!(0)$);	
				\end{scope}

			\end{tikzpicture}
			\caption{On the left is a copy of $K_{8}$ coloured according to \eqref{eq:SofierColouring}, on the right is the partition of $K_8$ (or $K_9$ if the vertex $v_8$ is included) given by \eqref{eq:Partition}.}\label{fig:cliqueconstruction}
		\end{figure}
		\begin{poc}For the even case $n=2\ell$ it follows from \cite[Problem 16.4]{Sofier} that the colouring given by \eqref{eq:SofierColouring} is a valid ($2\ell-1$)-colouring of $K_{2\ell}$ and so $C_1 \cup \cdots \cup C_\layers = E(K_{2\ell})$.
			We claim that if any two colour classes $c_{i}$ and $c_{i+1}$ from this ($2\ell -1$)-colouring are merged then the resulting set of edges contains a Hamiltonian cycle. To see this holds note that class $c_{i}$ contains the edges $(2\ell -1)(i)$ and  $\{(i+j  \text{ mod }  2\ell-1)(i-j  \text{ mod }  2\ell-1)\}_{j\in [\ell -1]}$ and class $c_{i+1}$ contains $(2\ell -1)(i+1) $ and  $\{(i+1-j \text{ mod } 2\ell-1)(i+1+j  \text{ mod } 2\ell-1)\}_{j\in [\ell -1]}$.
			Thus in the union of classes $c_i$ and $c_{i+1}$ there is a Hamiltonian cycle given by 
			
			\begin{equation*}\begin{aligned}& 2\ell -1 ,\; i,\; i+2 \text{ mod } 2\ell-1,\;  i-2  \text{ mod } 2\ell-1 ,\;i+4  \text{ mod } 2\ell-1 ,\;i-4  \text{ mod } 2\ell-1 ,\dots \\ 
					&\qquad \qquad \dots,\;i+3  \text{ mod } 2\ell-1,\; i-1  \text{ mod } 2\ell-1 ,\; i+1,\;2\ell-1,  \end{aligned}\end{equation*}
			 where edge colours alternate between $c_i$ and $c_{i+1}$.
			Recall that, for $n=2\ell$, each layer is formed from at least $\lfloor \frac{2\ell-1 }{\layers} \rfloor $ consecutive colour classes, where $\layers < \lfloor \frac{n}{2}\rfloor = \ell$. Thus, each layer contains at least $\lfloor \frac{2\ell - 1}{\ell -1 } \rfloor \geq 2 $ consecutive colour classes, and so each layer is connected in $K_{n}$. For the odd case $n=2\ell+1$ we are connecting the extra vertex (labelled $2\ell$) to at least one other vertex of the even vertex construction in each layer. So in the odd case the layers are also connected.
			
			For the bound on the degrees, when $n=2\ell$ each layer of the construction contains at most $\lceil \frac{2\ell -1}{\layers} \rceil=\lceil \frac{n -1}{\layers} \rceil$ colour classes merged to form a layer, and the number of edges from any colour class adjacent to any vertex is exactly one. When $n=2\ell +1$, the ``extra'' vertex (labelled $2\ell$) has at most $\lceil \frac{n-1}{\layers} \rceil$ edges from any layer leaving it and adds at most one edge from each layer to any vertex in $\{0, \dots, 2\ell -1\}$. Thus, in the odd case, the max degree is bounded by $\max\{ \lceil \frac{n-1}{\layers} \rceil, \lceil \frac{n-2}{\layers} \rceil + 1\}\leq \lceil \frac{n }{\layers} \rceil  $ as claimed.         
		\end{poc}
		We now apply Lemma \ref{lem:1mec} to the partition found by \eqref{eq:Partition} to show the result. Recall that the degree of any vertex within any cop layer $C_i$ is at most $\left\lceil  {n/\layers}\right\rceil  \leq n/\layers+1$ by Claim \ref{clm:partition}. Thus, setting $k = \lfloor \layers/10\rfloor \leq \layers/10   $ in  Lemma \ref{lem:1mec}, for any vertex $u\in V$, and any sets $S_{1}, \dots, S_{\layers} \subset V\backslash \{v\}$  satisfying $\sum_{i=1}^\layers |S_{i}|= k$ we have  
		\begin{equation*}  \left|\{u\}\cup \bigcup_{i=1}^{\layers}\bigcup_{v\in S_{i}}  N_{G_{i}}[v] \right|\leq 1 + k + k\cdot\left(\left\lceil \frac{n}{\layers}\right\rceil +1\right)  \leq  1 +  \frac{3\layers }{10} +  \frac{n }{10}\leq \frac{n}{4} +1 <n,  \end{equation*}
		where the last inequality holds as we can assume $n\geq 2$. The result then follows from Lemma \ref{lem:1mec}. 
	\end{proof}

	\subsection{Binomial Random Graphs}\label{sec:RG}
	We now consider the extremal multi-layer cop number of the binomial random graph $G_{n, p}$. For any integer $n\geq 1$, this is the  $n$-vertex simple graph generated by sampling each possible edge independently with probability $0< p=p(n)<1$, see \cite{RandomGraphBook} for more details. The following result shows that, for a suitably dense binomial random graph $G_{n,p}$, with probability tending to $1$ as $n\rightarrow \infty$, we have  $\maxcop{\layers}{G_{n,p}} =\Theta(\layers\log(n)/p)$. The single-layer cop number of $G_{n,p}$ in the same range is known to be $ \Theta(\log(n)/p) $ \cite{BonatoPW07}, so in some sense our result provides a generalisation.

	\begin{theorem}\label{thm:randgraph}
		For any fixed $\eps > 0$, let $ n^{1/2+\eps }\leq np=o(n)$ and $1\leq \layers\leq n^{\eps}$. Then,
		\[\mathbb{P}\left(\frac{\eps }{10} \cdot \frac{ \layers \cdot \ln  n}{ p}\leq \maxcop{\layers}{G_{n,p}} \leq  10\cdot  \frac{ \layers \cdot \ln  n}{ p}\right)\geq  1-e^{-\Omega(\sqrt{n})}  .\]   
	\end{theorem}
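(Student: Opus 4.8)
The plan is to treat the two bounds separately. The upper bound is a short deduction from the domination-number machinery of Section~\ref{sec:mlds}, applied to \emph{every} admissible layering. The lower bound is the main content: I would construct a worst-case layering by giving a uniformly random $\layers$-colouring to the edges of $G_{n,p}$ and show the resulting multi-layer graph is $(1,k)$-m.e.c.\ for $k=\lceil\frac{\eps}{10}\cdot\frac{\layers\ln n}{p}\rceil$ — this is the natural multi-layer analogue of the Bonato--Pra{\l}at--Wormald argument for the single-layer cop number of $G_{n,p}$. Throughout, ``whp'' abbreviates ``with probability $1-e^{-\Omega(\sqrt n)}$''.

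\textbf{Upper bound.} A Chernoff bound and a union bound over vertices give, since $np\ge n^{1/2+\eps}$, that whp $\delta(G_{n,p})\ge np/2$; condition on this. For any $\mlg=(V,\{C_1,\dots,C_\layers\},*)\in\mathcal L(G_{n,p},\layers)$ the layers cover every edge of $G_{n,p}$, so Proposition~\ref{prop:multmin>flatmin} gives $\delta(\mlg)\ge\delta(\flatten{\mlg})=\delta(G_{n,p})\ge np/2$. Enlarging the robber layer to $\binom V2$ by Proposition~\ref{prop:subsetrobber}, then invoking Lemma~\ref{lem:copdom} and Proposition~\ref{lem:domnum}, and using that $x\mapsto\frac{n\layers}{\layers+x}\bigl(\ln\frac{\layers+x}{\layers}+1\bigr)$ is non-increasing on $x\ge0$ (its $x$-derivative is $n\layers\ln(\layers/(\layers+x))/(\layers+x)^2\le 0$), one gets
\[
\mcop{\mlg}\;\le\;\gamma\bigl((V,\{C_1,\dots,C_\layers\})\bigr)\;\le\;\frac{n\layers}{\layers+np/2}\!\left(\ln\frac{\layers+np/2}{\layers}+1\right)\;\le\;\frac{2\layers}{p}\bigl(\ln(np)+1\bigr)\;\le\;\frac{10\layers\ln n}{p}
\]
for $n$ large (using $np\le n$). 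Maximising over $\mlg$ yields $\maxcop{\layers}{G_{n,p}}\le 10\layers\ln n/p$.

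\textbf{Lower bound.} I would work in the joint space where each pair $\{u,w\}\in\binom V2$ independently gets a label in $\{0,1,\dots,\layers\}$, label $0$ with probability $1-p$ and each $i\in[\layers]$ with probability $p/\layers$; let $C_i$ be the pairs labelled $i$, let $G_{n,p}$ be the pairs with nonzero label, and $R=E(G_{n,p})=\bigcup_iC_i$. Each $C_i$ marginally has law $G_{n,p/\layers}$ with $np/\layers\ge n^{1/2}$, hence is connected whp, so whp $\mlg:=(V,\{C_1,\dots,C_\layers\},*)\in\mathcal L(G_{n,p},\layers)$. Set $k=\lceil\frac{\eps}{10}\cdot\frac{\layers\ln n}{p}\rceil=\lo{n}$. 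Fix $v$ and sets $S_1,\dots,S_\layers$ with $\sum_i|S_i|=k$, $v\notin\bigcup_iS_i$ (so $\bigcup_iS_i\ne V$ since $k<n$), and call $x\notin\{v\}\cup\bigcup_iS_i$ a \emph{witness} if $vx\in E(G_{n,p})$ and $xs\notin C_i$ for all $i$ and all $s\in S_i$. The structural point is that the pair-sets $\{\{v,x\}\}\cup\{\{x,s\}:s\in\bigcup_iS_i\}$ are disjoint over distinct $x$, so the events ``$x$ is a witness'' are independent; moreover, writing $m_s=|\{i:s\in S_i\}|\le\layers$, a given $x$ is a witness with probability $p\prod_{s\in\bigcup_iS_i}(1-pm_s/\layers)\ge p(1-p)^{k/\layers}$ (using $1-pt\ge(1-p)^t$ for $t\in[0,1]$ by convexity). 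Hence $\mathbb P(\text{no witness})\le\exp\bigl(-p(1-p)^{k/\layers}(n-1-k)\bigr)$. Since $p=\lo{1}$, $(1-p)^{k/\layers}\ge n^{-\eps/2}$ for $n$ large and $n-1-k\ge n/2$, making this at most $\exp(-\tfrac12 p\,n^{1-\eps/2})\le\exp(-\tfrac12 n^{1/2+\eps/2})$. By Vandermonde's identity the number of choices of $(v;S_1,\dots,S_\layers)$ is at most $n\binom{n\layers}{k}\le\exp(\bo{n^{1/2}(\ln n)^2})$, which is negligible against $\exp(\tfrac12 n^{1/2+\eps/2})$ for $n$ large; a union bound then shows $\mlg$ fails to be $(1,k)$-m.e.c.\ with probability $e^{-\Omega(\sqrt n)}$, and Lemma~\ref{lem:mec} gives $\mcop{\mlg}>k$.

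\textbf{Conclusion and the hard part.} Combining the above, whp over the joint space the construction simultaneously lies in $\mathcal L(G_{n,p},\layers)$ and is $(1,k)$-m.e.c. Since the marginal law of $G_{n,p}$ here is the binomial random graph and the conditional law of the colouring given $G_{n,p}$ is ``each edge independently uniform in $[\layers]$'', a Markov-inequality argument (if the joint failure probability is $q$, then $G_{n,p}$ admits no good colouring with probability at most $2q$) transfers this to $G_{n,p}$ alone: whp some $\mlg\in\mathcal L(G_{n,p},\layers)$ has $\mcop{\mlg}>\frac{\eps}{10}\cdot\frac{\layers\ln n}{p}$, so $\maxcop{\layers}{G_{n,p}}\ge\frac{\eps}{10}\cdot\frac{\layers\ln n}{p}$; intersecting with the upper-bound event finishes the proof. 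I expect the main obstacle to be the lower bound, specifically arranging the independence so that ``no witness'' has probability roughly $\exp(-p(1-p)^{k/\layers}n)$ and then checking that, under $np\ge n^{1/2+\eps}$ and $\layers\le n^\eps$, this exponent beats the $\exp(\bo{k\ln n})$ union bound over cop placements — which is exactly where the constant $\eps/10$ in the definition of $k$ is forced.
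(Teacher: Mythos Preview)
Your proof is correct and follows essentially the same two-step strategy as the paper: the upper bound via Lemma~\ref{lem:copdom} and Proposition~\ref{lem:domnum} applied uniformly after a Chernoff bound on $\delta(G_{n,p})$, and the lower bound via a random layering shown to be $(1,k)$-m.e.c.\ by a union bound over cop placements, together with connectivity of each layer. The one notable technical difference is the random model for the layers: the paper samples the $C_i$ as \emph{independent} copies of $G_{n,p^*/\layers}$ (with $p^*$ chosen so that $\flatten{\mlg}\sim G_{n,p}$), which makes the events $\{sx\notin C_i\}$ genuinely independent across $i$ and gives the witness probability $p(1-p^*/\layers)^k$ directly; you instead \emph{partition} the edges of $G_{n,p}$ uniformly into $\layers$ colour classes, which avoids introducing $p^*$ but requires your convexity step $1-pt\ge(1-p)^t$ to handle vertices lying in several $S_i$. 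Both routes land on the same exponent, and your transfer from the joint space to the marginal $G_{n,p}$ is actually simpler than you suggest: since ``some good layering exists'' is implied by ``the random layering is good'', no Markov argument is needed.
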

	
	Our proof follows the broad idea of \cite{BonatoPW07} for the single layer case, namely bounding the cop number from above by the domination number and from below using the existentially closed property. However, in our lower bound there is the added complication that we need to construct a random multilayer graph where each layer is connected, and when flattened follows the distribution of $G_{n,p}$.  
	
	We first recall a standard Chernoff bound. 
	\begin{lemma}[{\cite[Corollary 4.6]{PandC}}]\label{lem:chb} Let $X_1,\dots, X_n$ be independent Bernoulli trials such that $\Pr{X_i=1} =p_i$. Let $X =\sum_{i=1}^n X_i$ and $\mu =\Ex{X}$. Then, for any $0<\delta<1$, we have 
		$\Pr{|X-\mu|\geq \delta \mu}\leq 2 e^{-\delta^2\mu /3}.$
	\end{lemma}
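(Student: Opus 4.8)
The plan is to derive the two-sided estimate from two separate one-sided tail bounds combined by a union bound, which is exactly where the factor of $2$ comes from; each one-sided bound I would obtain by the classical exponential-moment (Chernoff) method. Concretely, it suffices to prove $\Pr{X\geq(1+\delta)\mu}\leq e^{-\delta^2\mu/3}$ and $\Pr{X\leq(1-\delta)\mu}\leq e^{-\delta^2\mu/3}$ and add them. For the upper tail, I would fix any $t>0$ and apply Markov's inequality to the nonnegative variable $e^{tX}$:
\[\Pr{X\geq(1+\delta)\mu}=\Pr{e^{tX}\geq e^{t(1+\delta)\mu}}\leq e^{-t(1+\delta)\mu}\,\Ex{e^{tX}}.\]
Independence of the $X_i$ lets me factor $\Ex{e^{tX}}=\prod_{i=1}^{n}\Ex{e^{tX_i}}$, and since each $\Ex{e^{tX_i}}=1+p_i(e^t-1)\leq \exp\!\left(p_i(e^t-1)\right)$ by the inequality $1+x\leq e^x$, this yields $\Ex{e^{tX}}\leq \exp\!\left((e^t-1)\mu\right)$ using $\mu=\sum_i p_i$.

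The next step is to optimise the resulting bound $\exp\!\left((e^t-1)\mu-t(1+\delta)\mu\right)$ over $t>0$; the minimiser is $t=\ln(1+\delta)$, giving the standard expression $\left(e^{\delta}(1+\delta)^{-(1+\delta)}\right)^{\mu}$. The lower tail I would treat symmetrically: applying Markov to $e^{-tX}$ with $t>0$ and the same factorisation gives $\Pr{X\leq(1-\delta)\mu}\leq \exp\!\left((e^{-t}-1)\mu+t(1-\delta)\mu\right)$, whose minimiser $t=-\ln(1-\delta)>0$ is legitimate precisely because $\delta<1$, and produces $\left(e^{-\delta}(1-\delta)^{-(1-\delta)}\right)^{\mu}$.

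It then remains to show both optimised quantities are at most $e^{-\delta^2\mu/3}$. Taking logarithms and dividing by $\mu$, this reduces to the two scalar inequalities $(1+\delta)\ln(1+\delta)-\delta\geq \delta^2/3$ and $(1-\delta)\ln(1-\delta)+\delta\geq \delta^2/3$ for $0<\delta<1$. The lower-tail inequality is clean: its Taylor expansion gives $(1-\delta)\ln(1-\delta)+\delta=\sum_{k\geq 2}\frac{\delta^k}{k(k-1)}\geq \frac{\delta^2}{2}\geq \frac{\delta^2}{3}$, which is termwise nonnegative on $(0,1)$. I expect the upper-tail inequality to be the only delicate point, since the corresponding series is alternating; I would handle it by setting $h(\delta)=(1+\delta)\ln(1+\delta)-\delta-\delta^2/3$, noting $h(0)=h'(0)=0$ and verifying $h'(\delta)=\ln(1+\delta)-\tfrac{2}{3}\delta\geq 0$ throughout $(0,1)$ (for instance by observing that $h''$ changes sign only once, so $h'$ cannot dip below its endpoint values $h'(0)=0$ and $h'(1)=\ln 2-\tfrac{2}{3}>0$), whence $h\geq 0$. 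Combining the two one-sided bounds with the union bound then gives the claimed factor-$2$ estimate.
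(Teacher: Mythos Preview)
The paper does not prove this lemma at all; it is stated as a standard Chernoff bound and cited directly from Mitzenmacher and Upfal~\cite{PandC}. Your proposal is a correct and entirely standard derivation of that bound via the exponential-moment method, including the slightly fiddly verification that $(1+\delta)\ln(1+\delta)-\delta\geq \delta^2/3$ on $(0,1)$, which you handle properly by the second-derivative sign-change argument. So there is nothing to compare: your proof is fine, and the paper simply quotes the result without proof.
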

	
	We must secondly prove a standard lemma on the connectivity of $G_{n,p}$. 
	\begin{lemma}\label{lem:randomgraphconprob} Let $n\geq 1000$ and $ \frac{10 \log n}{n}\leq p \leq 1$. Then,  $\mathbb{P}(G_{n,p} \text{ is connected}) \geq 1 - e^{- np/3}$.
	\end{lemma}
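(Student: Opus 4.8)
The plan is the classical first-moment (union bound) argument for the connectivity threshold, carried out with the constants tracked so that the tail lands exactly at $e^{-np/3}$. A graph on $n$ vertices is disconnected if and only if there is a vertex set $S$ with $1\le |S|\le \lfloor n/2\rfloor$ having no edges to $V\setminus S$ (take the smallest component, or its complement, whichever has size at most $n/2$). Since a fixed $s$-element set has exactly $s(n-s)$ potential crossing edges, each present independently with probability $p$, a union bound over the choice of $S$ gives
\[
\mathbb{P}\big(G_{n,p}\text{ is disconnected}\big)\;\le\;\sum_{s=1}^{\lfloor n/2\rfloor}\binom{n}{s}(1-p)^{s(n-s)}.
\]

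Next I would estimate the summand crudely, using $\binom{n}{s}\le n^{s}$, the inequality $1-p\le e^{-p}$, and $n-s\ge n/2$ for $s\le n/2$, so that the $s$-th term is at most $\exp\!\big(s\log n-psn/2\big)=e^{-sa}$ where $a:=pn/2-\log n$. The hypothesis $pn\ge 10\log n$ forces $a\ge 4\log n>0$, so the series is geometric and bounded by $\tfrac{e^{-a}}{1-e^{-a}}\le 2e^{-a}=2n\,e^{-pn/2}$, where $e^{-a}\le n^{-4}\le \tfrac12$ is used to bound the geometric factor by $2$. It then suffices to check $2n\,e^{-pn/2}\le e^{-pn/3}$, i.e.\ $\log(2n)\le pn/6$; using $pn\ge 10\log n$ this follows from $\log(2n)\le \tfrac{5}{3}\log n$, which holds for all $n\ge 1000$. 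Chaining these inequalities yields $\mathbb{P}(G_{n,p}\text{ is disconnected})\le e^{-pn/3}$, which is the stated bound.

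There is no genuine obstacle here: the estimate is elementary and self-contained. The only point needing attention is the constant-chasing, so that the final exponent is precisely $np/3$ rather than an unspecified $c\,np$ — this is why I would sum the geometric series globally rather than isolating the dominant $s=1$ term, and why the slack in both hypotheses ($p\ge 10\log n/n$ and $n\ge 1000$) is exploited at the very end. One could instead use the sharper bound $\binom{n}{s}\le (en/s)^{s}$, but the cruder bound already suffices.
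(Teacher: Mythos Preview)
Your proof is correct and complete. Both you and the paper run a first-moment union bound over would-be components of size $s\le n/2$, but the decompositions differ in a way worth noting. The paper bounds the probability that a given $k$-set is a connected component by counting spanning trees via Cayley's formula, yielding the term $\binom{n}{k}k^{k-2}p^{k-1}(1-p)^{k(n-k)}$; this forces them to treat the isolated-vertex case $k=1$ separately (where the bound becomes $e^{-np/2}$ after using $1/(1-p)\le 6$, which in turn requires the side assumption $p\le 5/6$), and then to invoke monotonicity of connectivity to cover $p>5/6$. You instead bound the probability that a given $s$-set is merely \emph{isolated from its complement}, which drops the spanning-tree factor entirely and leaves the simpler term $\binom{n}{s}(1-p)^{s(n-s)}$. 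This is a weaker per-term event but still sufficient, and it has the advantage that the resulting geometric-series estimate works uniformly over the whole range $10\log n/n\le p\le 1$ with no case split and no appeal to Cayley's formula or monotonicity. The paper's route would pay off if one needed sharper constants or the precise threshold, but for the coarse tail $e^{-np/3}$ your cut-based argument is the cleaner one.
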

	\begin{proof}
		We assume for the time being that $p\leq 5/6$. Observe that if $G_{n,p}$ is not connected then it must contain a connected component of size $1\leq k \leq \lfloor n/2\rfloor $ that has no edges to any other vertices of the graph. Thus, if $X_k$ is the number of connected components of size exactly $k$ in $G_{n,p}$ then, by the union bound
		\[\mathbb{P}(G_{n,p} \text{ is not connected}) \leq \mathbb{P}(X_1>0) + \sum_{k=2}^{\lfloor n/2\rfloor} \mathbb{P}(X_k>0).  \]
		Observe that a connected component of size one is a vertex with no edges, thus by the union bound \[\mathbb{P}(X_1>0) \leq n \cdot (1-p)^{n-1}  \leq n\cdot\frac{1}{1-p} \cdot e^{-np} \leq 6n\cdot e^{-np} \leq  e^{-np/2}. \]For $2\leq k\leq \lfloor n/2\rfloor $ a connected component must contain a spanning tree. By Cayley's formula there are $k^{k-2}$ possible spanning trees, each such tree occurs with probability $p^{k-1}$, and there must be no edges going outside the component, which happens with probability $(1-p)^{k(n-k)}$. As there are $\binom{n}{k}$ ways to choose the vertices of a component on $k$ vertices, by the union bound, for any $2\leq k\leq \lfloor n/2\rfloor $, we have  \[\mathbb{P}(X_k> 0) \leq \binom{n}{k}\cdot k^{k-2}p^{k-1}\cdot (1-p)^{k(n-k)} \leq n^k\cdot \frac{(pk)^k}{p}\cdot e^{-knp/2} \leq e^{-k(np/2 - \ln(npk) ) }/p \leq e^{-knp/3},  \] as $np\geq 10 \log n$. Combining these two bounds and applying the geometric series formula gives 
		\[\mathbb{P}(G_{n,p} \text{ is not connected}) \leq e^{-np/2} + \sum_{k=2}^{\lfloor n/2\rfloor}e^{-knp/3} \leq   e^{-np/2} + \frac{e^{-2np/3}}{1-e^{-np/3}}  \leq e^{-2np/5},  \] as $n\geq 100$. Finally, for $5/6<p\leq 1$, since connectivity is a monotone property  we have \[\mathbb{P}(G_{n,p} \text{ is not connected}) \leq \mathbb{P}(G_{n,5/6} \text{ is not connected})\leq e^{-2n\cdot (5/6)/5} = e^{-n/3}\leq e^{-np/3} ,\] so the result follows.     
	\end{proof}
	We can now prove  Theorem \ref{thm:randgraph}. 
	\begin{proof}[Proof of Theorem \ref{thm:randgraph}] 
		We begin with the upper bound. This is established by bounding the multi-layer domination number $\gamma(\mathcal{G})$  from above, where $\mathcal{G}= (V, \{E_1, \ldots, E_\layers\})$ is constructed by 
		sampling a random graph $G_{n,p}$ with vertex set $V$ and choosing an arbitrary partition $\{E_i\}_{i\in [\layers]}$ of its edge set $E$ into layers satisfying $\cup_{i\in \layers} E_i =E$. Recall the definition $\delta(\mlg) := \min_{v\in V} \sum_{i\in[\layers]} d_{(V,E_i)}(v)$ from \eqref{eq:deltamlg} and observe that $\delta(\mlg) \geq \delta(\flatten{\mlg})= \delta(G_{n,p})$ holds for any partition $\{E_i\}_{i\in [\layers]}$ by Proposition \ref{prop:multmin>flatmin}. Thus, Proposition \ref{lem:domnum} gives \begin{equation}\label{eq:dombydeg}\gamma(\mlg)\leq   \frac{n\layers}{\layers + \delta(\mlg)}\cdot  \left(  \ln\left(\frac{\layers + \delta(\mlg)}{\layers}\right)+ 1 \right)\leq    \frac{n\layers}{\layers + \delta(G_{n,p})}\cdot  \left(  \ln\left(\frac{\layers + \delta(G_{n,p})}{\layers}\right)+ 1 \right) , \end{equation} where the second inequality holds since $\frac{\layers}{\layers + x}\cdot   \ln\left(\frac{\layers + x}{\layers}\right) $ is decreasing in $x$ whenever $x \geq \layers (e-1) $.
		
		Observe that (assuming $\delta(G_{n,p}) \geq \layers (e-1) $) equation \eqref{eq:dombydeg} gives a bound on the domination number that holds uniformly over all partitions of $G_n$ into a multilayer graph $\mathcal{G}$ and only depends on the minimum degree $\delta(G_{n,p})$ of the underlying random graph. Thus, to bound the extremal cop number using \eqref{eq:dombydeg} we just need to bound the minimum degree of $G_{n,p}$. The degree of each vertex in $G_{n,p}$ is distributed as a sum (denoted by $X$) of $n-1$ independent Bernoulli trials, each with success probability $p$. Thus, as $np \geq  n^{1/2 + \eps}$ and $ n/2 \leq 3(n-1)/4$ for large $n$, by Lemma \ref{lem:chb} and the union bound   
		\begin{equation}\label{eq:probofdeg}\mathbb{P}\left(\delta(G_{n,p}) \leq \frac{1}{2} np  \right) \leq n\cdot \mathbb{P}\left(|X - (n-1)p| \geq  \frac{1}{4}(n-1)p  \right) \leq n\cdot e^{-(\tfrac{1}{4})^2 \cdot \tfrac{(n-1)p}{3}} = o(e^{-\sqrt{n}}).  \end{equation}
		Recall that $\layers \leq n^{\eps}$ by hypothesis, and $n\geq \delta(G_{n,p}) \geq  np/2 = \omega(\layers) $ with probability $1-o(e^{-\sqrt{n}}) $ by \eqref{eq:probofdeg}. The upper bound in the statement follows by applying \eqref{eq:dombydeg} as with probability at least $1- o(e^{-\sqrt{n}})$, this gives \[\gamma(\mlg)\leq   \frac{n\layers}{\layers + \delta(G_{n,p})}\cdot  \left(  \ln\left(\frac{\layers + \delta(G_{n,p})}{\layers}\right)+ 1 \right)\leq   \frac{n\layers}{ np/2}\cdot  \left(  \ln\left(\frac{n^{\eps}+n}{1}\right)+ 1 \right) \leq 10 \cdot \frac{ \layers     \ln  n}{p} ,\] which establishes the upper bound.

		We now prove the lower bound. The rough idea will be to construct a random multilayer graph, lower bound the multilayer cop number of $\mlg$ by showing it is $(1,k)$-m.e.c.\ for a specified $k$, and finally couple this with $G_{n,p}$ to give the desired lower bound. 
		
		In order to begin with the construction of $\mathcal{G}$ we must first (implicitly) define a function $p^*$ of $p$ and $\layers$. For any given $p\in[0,1]$ and $\layers \geq 1$, by the intermediate value theorem there exists some $p^*=p^*(\layers, p)$ satisfying \begin{equation}\label{eq:defofp*} 1-(1-p^*/\layers)^\layers = p.\end{equation} We now claim that for any $0\leq p\leq 1/2$ and $\layers \geq 1$, the corresponding $p^*$ given by \eqref{eq:defofp*} satisfies   \begin{equation}\label{eq:pvsp*} \frac{p^*}{2} \leq p \leq p^*,\quad \text{and consequently,} \quad 0\leq p^*\leq 1 . \end{equation}To begin note that by rearranging \eqref{eq:defofp*} we have $ p^* = \layers\cdot (1- (1-p)^{1/\layers} ) < \layers$. Thus, by Bernoulli's inequality \[p=1 -(1-p^*/\layers)^\layers\leq \layers\cdot p^*/\layers = p^*.\] On the other hand, observe that $p = 1- \left(1-p^*/\layers\right)^\layers\geq1- e^{-p^*} $ since $1-x\leq e^{-x} $ for all $x\geq 0$. By rearranging and taking the logarithm we have $p^*\leq -\ln(1-p) < \frac{p}{1-p} \leq 2p $ as $p\leq 1/2$, giving \eqref{eq:pvsp*}.

		We will construct a multilayer graph $\mlg= (V,\{C_1, \dots, C_\layers\}, *)$ by sampling graphs $G_i $ independently from $G_{n, p^*/\layers}$ and setting  $C_i=E(G_i)$ for each $i$.  Our next step is to show that  $\mathcal{G}$  is $(1, k)$-m.e.c.\ for some $k$ we now set. Recall that $np\geq n^{1/2+\eps}$ for some fixed $\eps>0$, where $\eps \leq 1/2$ as $p\leq 1$, and set 
		\begin{equation}\label{eq:constc}k=  \left\lfloor - \frac{\eps}{2} \cdot \frac{\ln n }{ \ln (1- p^*/\layers) }\right\rfloor.\end{equation}  
		Using the fact that $\frac{x}{1+x}\leq \ln(1+x)\leq x$ for any $x>-1$ and the bound $p \leq p^*$ from \eqref{eq:pvsp*} yields 
		\begin{equation}\label{eq:upbddonk}k \leq - \frac{\eps}{2} \cdot \frac{\ln n }{ \ln (1- p^*/\layers) }  \leq \frac{\eps}{2} \cdot \frac{\layers \ln n }{ p^*}\leq  \frac{\layers \ln n }{ p^*} \leq    \frac{\layers \ln n }{ p}.\end{equation} For sufficiently large $n$ we can assume $p^*\leq 1/3$, so the bound $p^* \leq 2p$ in \eqref{eq:pvsp*} gives 
		\begin{equation}\label{eq:lowbddonk}k \geq - \frac{\eps}{2} \cdot \frac{\ln n }{ \ln (1- p^*/\layers) } -1 \geq   \frac{\eps}{2} \cdot \frac{(1-p^*/\layers) \ln n  }{  p^*/\layers  }-1\geq \frac{\eps}{3} \cdot \frac{\layers \ln n }{ p^*}- 1 \geq \frac{\eps}{10} \cdot \frac{\layers \ln n }{ p}.\end{equation}

		Let $X$ be the number of collections of sets $S_1, \dots, S_\layers \subseteq V$, with $\sum_{i\in [\layers]}|S_i|=k$, and vertices $v \not\in S_1\cup \cdots \cup S_\layers$ such that there does not exist any vertex $x\notin \{v\}\cup S_1\cup \cdots \cup S_\layers$ where $xv\in R$ and $s_ix\notin C_i$ for any $i\in [\layers]$ and $s_i\in S_i$. Observe that if $X=0$ then $\mathcal{G}$ is $(1, k)$-m.e.c.. 
		
		By independence and the definition of $k$ from \eqref{eq:constc}, for a vertex $x 
		\in V  \backslash (\{v\}\cup S_1\cup \cdots \cup S_\layers )$ the probability that $x$ is adjacent to $v$ but not to any vertex in any of the sets $S_1,\dots, S_\layers$ is given by \begin{equation}\label{eq:prob}p\cdot \left(1 - \frac{ p^*}{\layers}\right)^{|S_1|}\cdots \left(1 - \frac{ p^*}{\layers}\right)^{|S_\layers|} = p\cdot \left(1- \frac{p^*}{\layers}\right)^k \geq p\cdot n^{-\eps/2}.\end{equation} 
		
		Since edges are sampled independently, the probability that no such vertex $x\in V  \backslash (\{v\}\cup S_1\cup \cdots \cup S_\layers )$ can be found for a particular $ S_1,\dots, S_\layers $   and $v$ is \begin{equation}\label{eq:bddonmek} \left(1 - p\left(1-  \frac{p^*}{ \layers} \right)^k\right)^{n-|S_1\cup \cdots \cup S_\layers|-1}\leq \left(1-pn^{-\eps/2}\right)^{n-k-1}\leq e^{-pn^{-\eps/2}\left(n - \frac{\layers\ln n}{p} -1\right)}\leq e^{- \frac{1}{2}\cdot n^{-\eps/2}\cdot np }  , \end{equation} for  large $n$, where we have used \eqref{eq:prob}, $1+x\leq e^x$ for any $x>-1$, \eqref{eq:upbddonk}, and $\layers=o(\frac{np}{\log n})$. Observe that \begin{align*}\mathbb{E}\left[ X\right] &\leq  \sum_{k_1+\cdots +k_\layers=k}\binom{n}{k_1}\cdots\binom{n}{k_\layers} \cdot n \cdot \left(1 - p\cdot \left(1- \frac{p^*}{\layers}\right)^k\right)^{n-|S_1\cup \cdots \cup S_\layers|-1}.\end{align*}Note that there are at most $\layers^k$ items in the sum above, as each of the $k$ cops can choose from at most $\layers$ many layers. Thus, by \eqref{eq:bddonmek} for large $n$,  
		\begin{equation}\label{eq:expX}
			\mathbb{E}\left[ X\right]   \leq   \layers^k \cdot {n}^{k }\cdot  n \cdot e^{- \frac{1}{2}\cdot n^{-\eps/2}\cdot np} \leq  n^{2k+1}e^{- \frac{1}{2}\cdot n^{-\eps/2}\cdot np} \leq  \exp\left(\frac{2\layers(\ln n)^2}{p} + \ln n  -\frac{1}{2} \cdot n^{-\eps/2}\cdot np \right),\end{equation}where we have used the bound \eqref{eq:upbddonk} on $k$. By the conditions $\layers\leq n^\eps$ and $np\geq n^{1/2+\eps}$, we have  \[ \frac{ n^{\eps/2}}{ np }\cdot  \frac{\layers(\ln n)^2}{p}=  \frac{n^{\eps/2} n }{(np)^2}\cdot  \layers(\ln n)^2 \leq \frac{n^{\eps/2} n }{n^{1+2\eps}}\cdot  n^\eps (\ln n)^2 = o(1),  \] and thus, for large $n$,\[\frac{2\layers(\ln n)^2}{p} + \ln n -\frac{1}{2} \cdot n^{-\eps/2}\cdot np  \leq   -\frac{1}{3} \cdot n^{-\eps/2}\cdot np \leq - \frac{1}{3}\cdot n^{(1  + \eps)/2} .\] Hence, by Markov's inequality and \eqref{eq:expX} we have
		\[\mathbb{P}\left(X\geq 1 \right) \leq \mathbb{E}\left[ X\right] \leq \exp\left(- \frac{1}{3}\cdot n^{(1  + \eps)/2}  \right) =  e^{-\Omega(\sqrt{n})}.  \] It follows that with probability  $1-e^{-\Omega(\sqrt{n})}$ the multi-layer graph $\mlg$ is $(1,k)$-m.e.c.\ for $k\geq \frac{\eps}{10}\cdot \frac{\layers \cdot \ln n}{p} $ by \eqref{eq:lowbddonk}. Thus if we define the event $ \mathcal{M}=\{\mcop{\mathcal{G}}\geq \frac{\eps}{10}\cdot \frac{\layers \cdot \ln n}{p}\} $ then applying Lemma \ref{lem:mec} gives  \begin{equation*} \Pr{\neg \mathcal{M}}=  e^{-\Omega(\sqrt{n})}.\end{equation*} 
		Let $\mathcal{C}$ be the event that $G_i$ for each layer $i\in [\layers]$ of $\mathcal{G}$ is connected. As $p^*/\layers \geq p/\layers \geq  1/\sqrt{n}$ by \eqref{eq:pvsp*}, each  $G_i$ is not connected with probability at most  $e^{-\sqrt{n}/3}$ by Lemma \ref{lem:randomgraphconprob}. Thus, by the union bound, we have \begin{equation*}
			\Pr{\neg \, \mathcal{C}} \leq  \layers \cdot e^{-\sqrt{n}/3} \leq  e^{-\sqrt{n}/4}.  
		\end{equation*}

		Recall that for a fixed graph $G$ the $\mathcal{L}(G,\layers)$ contains all multilayer graphs $\{(V, \{C_1, \dots, C_\layers\}, *)$ where $E = C_1 \cup \cdots \cup C_\layers$  and for each $i\in[\layers]$, the layer $(V,C_i)$ is connected. Hence, for each $G\in 2^{\binom{n}{2}}$ there is a corresponding fixed collection $\mathcal{L}(G,\layers) \subseteq (2^{\binom{n}{2}})^{\layers} $. Thus, for any given $\layers\geq 1$, taking $G=G_{n,p} $ induces a probability distribution $\mathfrak{L}(n,p,\layers)$ over subsets of $(2^{\binom{n}{2}})^{\layers}$.

		By our choice of $p^*$, given implicitly by $p =1-(1-p^*/\layers)^{\layers} $ in \eqref{eq:defofp*}, we can couple $\mathcal{G}$ to $G(n,p)$; this holds since $\flatten{\mlg}=\cup_{i\in[\layers]}G_i$ follows the distribution of $G_{n,p}$. This coupling also works in the opposite direction: given $G_{n,p}$ we sample each graph $G_i$ by including each edge $e\in E(G_{n,p})$ with probability $p^*/(p\layers)$, thus each edge $e$ is present in $G_i$ independently with probability $p\cdot p^*/(p\layers) = p^*/ \layers$. Observe also that conditional on $\mathcal{C}$ we can couple $\mathcal{G}$ to a random collection $\mathcal{L}(G_{n,p},\layers)$ with distribution $\mathfrak{L}(n,p,\layers)$ so that $\mathcal{G}\in  \mathcal{L}(G_{n,p},\layers)$. It follows from the union bound that 
		\[\mathbb{P}\left( \maxcop{\layers}{G_{n,p}} <  \frac{\eps }{10} \cdot \frac{ \layers \cdot \ln  n}{ p}\right)  \leq  \Pr{\neg \mathcal{M}} + \Pr{\neg \mathcal{C}} \leq e^{-\Omega(\sqrt{n})} +e^{-\sqrt{n}/4} \leq e^{-\Omega(\sqrt{n})} , \]  
		which gives the lower bound.
	\end{proof}

	\subsection{Treewidth} \label{sec:trw}
	
	A \textit{tree decomposition} \cite[Ch.\ 12]{Diestel} of a graph $G=(V,E)$ is a pair $\left(\{X_i: i\in
	I\},\right.$ $\left.T=(I,F)\right)$ where $\{X_i: i \in I\}$ is a family of
	subsets (or ``bags'') $X_i\subseteq V$ and $T = (I,F)$ is a tree such that
	\begin{itemize}
		\item $\bigcup_{i \in I} X_i = V$,
		\item for every edge $vw \in E$ there exists $i \in I$ with $\{v,w\}
		\subseteq X_i$,
		\item for every $i,j,k \in I$, if $j$ lies on the path
		from $i$ to $k$ in $T$, then $X_i \cap X_k \subseteq X_j$.  
	\end{itemize}
	The \textit{width} of $\left(\{X_i:i \in I\},T=(I,F)\right)$ is defined as
	$\max_{i \in I} |X_i| -1$. The \textit{tree\-width} $\trw(G)$ of $G$ is	the minimum width of any tree decomposition of $G$.
	
	The following result of Joret, Kami\'nski \& Theis bounds the single-layer cop number of a graph by its treewidth.

	\begin{proposition}[{\cite[Proposition 1.5]{JoretKT10}}]The cop number of a graph $G$ is at most $\trw(G)/2 + 1$.  
	\end{proposition}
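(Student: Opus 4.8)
The plan is to read a cop strategy off an optimal tree decomposition. Set $w:=\trw(G)$ and fix a tree decomposition $(T,\{X_t\}_{t\in V(T)})$ of $G$ of width $w$; by the usual reductions we may assume it is \emph{smooth}, meaning $|X_t|=w+1$ for every node $t$ and $|X_t\cap X_{t'}|=w$ for every edge $tt'$ of $T$, and we root $T$ at an arbitrary node. For a node $t$ let $G_t$ be the subgraph of $G$ induced by the union of the bags in the subtree rooted at $t$; the third tree-decomposition axiom guarantees that, for $t$ not the root, the set $S_t:=X_t\cap X_{\mathrm{parent}(t)}$ has size $w$ and separates $V(G_t)\setminus S_t$ from the rest of $G$.

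The cops will execute a ``sweep'' of $T$: they maintain a node $t$ together with the invariant that the robber occupies a vertex of $V(G_t)\setminus X_t$, and on each phase they either capture the robber or move $t$ to whichever child $t'$ has the robber inside $G_{t'}$. Since $T$ is finite and every advance strictly decreases the size of the robber's territory, the game ends after finitely many phases, and in the base case $t$ is a leaf, $G_t=X_t$ induces a graph on at most $w+1$ vertices, and the cops simply move onto it. Done naively — keep one cop on every vertex of the current bag, and to pass from $X_t$ to a child $X_{t'}$ leave the $w$ cops on $X_t\cap X_{t'}$ fixed while the last cop walks, entirely outside $G_{t'}$ and hence never meeting the robber, from the unique vertex of $X_t\setminus X_{t'}$ to that of $X_{t'}\setminus X_t$ — this already yields $\cop{G}\le w+1$.

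The real content of the proposition is bringing this down to $\lfloor w/2\rfloor+1$ cops, and I expect that to be the main obstacle. The point is that a separator of size $w$ need not be occupied by $w$ cops at all times: since the robber is visible and advances along one edge per turn, it must first ``approach'' a particular separator vertex before it can cross it, so a pool of roughly half the separator's vertices worth of cops that shadow the robber — kept, as a function of where the robber currently is, on exactly the separator vertices through which it is threatening to escape — together with one extra cop reserved to make the actual capture, can keep one side of the separator sealed. Making this precise is the delicate part: one must define, in the style of the standard ``shadow''/image strategies for cops and robbers, exactly which separator vertices the shadowing cops occupy given the robber's position and the current node $t$, verify that no single robber move breaks the seal, and check that this frugal control can be maintained while $t$ advances one bag at a time down the tree. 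I would carry this step out following the argument of Joret, Kami\'nski and Theis; once the sealing invariant is in place, termination and the leaf base case go through exactly as in the naive version, giving $\cop{G}\le\lfloor w/2\rfloor+1\le\trw(G)/2+1$.
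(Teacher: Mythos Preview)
The paper does not prove this proposition: it is quoted from \cite{JoretKT10} as a known result and stated only for context before the paper's own multi-layer treewidth bound. So there is no proof in the paper to compare against.

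On your proposal itself: you correctly set up the sweep of a smooth tree decomposition and obtain the easy bound $\cop{G}\le w+1$. But the entire content of the proposition is the improvement to $\lfloor w/2\rfloor+1$, and for that you give only a heuristic --- cops ``shadow'' the robber on about half the separator vertices --- before explicitly deferring the details to \cite{JoretKT10}. That is not a proof; it is a restatement of the citation. The heuristic is also genuinely underspecified: with $\lfloor w/2\rfloor$ cops guarding a size-$w$ separator, the robber can at any moment be adjacent to an unguarded separator vertex, and since cops and robber move at the same speed you have not explained how a cop reaches that vertex in time. A self-contained argument would need to spell out precisely which separator vertices the cops occupy as a function of the game state, and verify that this invariant survives every robber move and every advance of $t$ down the tree --- exactly the part you have labelled ``delicate'' and then skipped.
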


	Treewidth has a another known relationship with cop number as Seymour and Thomas~\cite{SeymourThomas} use the helicopter variant of cops and robbers and show that if a graph $G$ has cop number $k$ then the treewidth of $G$ is at most $k-1$. We do not use this helicopter variant here.
	
	\begin{proposition}
		For any connected graph $G := (V,E)$, and any integer $\layers \geq 1$, we have $\maxcop{\layers}{G} \leq \trw(G)$.
		Furthermore, these cops can be placed in any layers and still capture the robber.  
	\end{proposition}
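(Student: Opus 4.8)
The plan is to let the cop player fix a tree decomposition of $G$ of width $w := \trw(G)$ and have $w$ cops ``sweep'' it, maintaining throughout the invariant that the cops occupy a vertex set $Z$ with $|Z|\le w$ that contains the whole neighbourhood of the robber's current ``territory'' $K$ (a connected set of vertices the robber cannot leave), with each phase strictly shrinking $K$ until the robber is trapped. The point of aiming at separators rather than bags is that a careful choice lets us use only $w$ cops, matching $\trw(G)$ rather than $\trw(G)+1$.

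Concretely, I would first pass to a \emph{reduced} tree decomposition $(\{X_t\}_{t\in I}, T)$, one in which no bag is contained in an adjacent bag; then every adhesion $A_{ts}:=X_t\cap X_s$ (for $ts\in E(T)$) has at most $w$ vertices, while bags may have $w+1$. Root $T$; for $t\in I$ let $W_t$ be the union of the bags in the subtree at $t$, and recall the standard facts that $A_{t,\mathrm{parent}(t)}$ separates $W_t$ from the rest of $G$, and that if the robber's territory $K$ is a component of $G-A$ for an adhesion $A$ and $K$ lies strictly inside the corresponding subtree, then $N_G(K)\subseteq A$ and moreover $N_G(K)$ is contained in the adhesion one edge deeper toward $K$. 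The phase is: cops on an adhesion $A$, robber confined to a component $K$ of $G-A$ inside some $W_s$; if $K$ misses the bag $X_s$ then $K$ lies in $W_{s'}$ for a single child $s'$ of $s$ and $N_G(K)\subseteq A\cap A_{s,s'}$, so the cops on $A\cap A_{s,s'}$ stay put (keeping the robber boxed in) while the remaining cops walk, over as many turns as needed, from $A\setminus A_{s,s'}$ to cover $A_{s,s'}\setminus A$ --- there are enough cops since $|A_{s,s'}|\le w$ --- and we recurse one level deeper. Since $W_{s'}\subsetneq W_s$, after finitely many such descents $K$ must meet the bag $X_s$; then we cover all but one vertex of $X_s$ using the $\le w$ available cops ($|X_s|-1\le w$), which forces the robber into a strictly smaller component, and the process continues. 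Iterating, $K$ strictly decreases, so eventually $K$ is empty or a single vertex all of whose $G$-neighbours are occupied, at which point a cop captures the robber.

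Two points make the argument go through in the multi-layer setting for \emph{any} allocation. First, a cop must be relocated from one vertex to another many times; this is always possible because each layer is connected and spanning, so a cop on any layer can walk to any vertex, and the relocations are harmless because during them the static cops, which cover $N_G(K)$, keep the robber trapped. Second, for the final capture of a robber stuck on a single vertex $r$ with all of $N_G(r)$ occupied, when setting up that configuration I would route onto some $u\in N_G(r)$ a cop whose layer contains the edge $ur$ --- such a layer exists since the layers cover $E(G)$ --- so that the capture move is legal for that cop; this is exactly where the clause ``these cops can be placed in any layers'' is used. The degenerate case in which the only width-$w$ tree decomposition of $G$ is a single bag (that is, $G=K_n$ and $w=n-1$) is handled directly: $n-1$ cops placed on all but one vertex $v_\ast$, with a cop of a layer containing some edge $v_\ast u$ placed on $u$, win after one move regardless of the allocation.

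I expect the main obstacle to be the step of advancing past a bag of size exactly $w+1$ with only $w$ cops while guaranteeing that the robber's territory strictly shrinks: the naive ``occupy the whole bag'' move is unavailable, so one must argue carefully --- using reducedness and the connectivity of $G$ --- that covering all but one vertex of the relevant bag squeezes the robber into a strictly smaller subtree, and that the single uncovered vertex cannot serve as a permanent refuge for the robber.
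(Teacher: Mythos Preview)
Your overall plan --- sweep a tree decomposition, keeping the robber trapped behind a separator you repeatedly advance --- is the same as the paper's. The paper, however, is less careful: it places $\trw(G)$ cops so that ``each vertex of $X_i$ receives at least one cop'' and then migrates from bag to adjacent bag, which tacitly assumes $|X_i|\le\trw(G)$ and so, as written, really only yields the bound $\trw(G)+1$. Your move to a \emph{reduced} decomposition and to adhesions (which genuinely have size at most $\trw(G)$) is exactly the standard refinement needed to recover the missing~$+1$, and your remark that the final capture requires routing a cop whose \emph{layer} actually contains the edge to the robber is a real point the paper's proof does not address at all.

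That said, the case you yourself flag as the main obstacle is genuinely unresolved in your sketch. When $K\cap X_s\neq\emptyset$, covering $X_s\setminus\{v\}$ for some $v\in X_s\setminus A$ need not shrink $K$: if $X_s\setminus A=\{v\}\subseteq K$ then $X_s\setminus\{v\}=A$ and you have literally not moved; more generally, whenever $X_s\setminus A\subseteq K$ the uncovered vertex is forced to lie in $K$ and your progress measure $|K|$ can stall. You need a different potential --- e.g.\ a lexicographic one combining depth in $T$ with $|K|$ --- or a direct argument that after occupying $X_s\setminus\{v\}$ the robber's component is contained (up to $\{v\}$) in $W_{s'}$ for a single child $s'$, so that you can next advance to the adhesion $A_{s,s'}$ and descend. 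The final-capture routing also needs care in the tight case $|N_G(r)|=w$: if each neighbour of $r$ carries exactly one cop and none of them sits on a neighbour of $r$ in its own layer, moving the ``right'' cop into place vacates a neighbour and lets the robber slip out. Your instinct to arrange the correct cop-to-vertex assignment \emph{as} the cops arrive on $N_G(r)$, rather than afterwards, is the right one, but it has to be woven into the sweeping invariant rather than asserted.
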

	\begin{proof}Let $\mathcal{G}=(V, \{C_1, \dots, C_\layers\}, *)\in \mathcal{L}(G,\layers)$ be any multi-layer graph such that for each $i\in [\layers]$ the graph $(V,C_i)$ is connected, and $C_1 \cup \cdots \cup C_\layers = E$. Let $T=(I,F)$ be an optimal tree decomposition of $G$ with $k=\trw(G)$. To begin choose any bag $X_i$, and assign $k$ cops to the vertices of $X_i$ (in any layers) in any way so that each vertex of $X_i$ receives at least one cop. This ensures that the robber cannot start at any vertex in $X_i$. Hence the robber starts at some vertex not in $X_i$, and they are then confined to the subgraph corresponding to a single tree $T'$ of $T\backslash \{i\}$.

		Let $X_j$ be the bag corresponding to the unique vertex $j \in I$ in $T'$ such that $ij\in F$. We now move the cops one by one from the vertices of $X_i$ to cover the vertices of $X_j$. By \cite[Lemma 12.3.1]{Diestel}, the vertices $X_i\cap X_j$ form a cut-set of $G$ and so as long as we leave at least one cop at each vertex of $X_i\cap X_j$ while we move the remaining cops then the robber cannot leave the tree $T'$. Thus first move all cops (one by one) from vertices in $X_i \backslash (X_i\cap X_j)$ to vertices in $X_j \backslash (X_i\cap X_j)$, assigning at most one cop to a vertex (unless all vertices are covered). If there are still vertices of $X_j$ uncovered then we can find extra cops to cover these vertices from vertices in $X_i\cap X_j$ which have more than one cop. There are always enough cops to do this since, for any $i,j\in I$,  $ |X_j\backslash (X_i\cap X_j)|+ |X_i\cap X_j| = |X_j| \leq k $. Furthermore, since each layer is connected we can always move the cops in this way. 
		
		Thus, we can keep moving the cops to a bag that decreases the size of the tree $T'$ containing the robber until $T'$ just consists of vertices in the bag covered by the cops, thus they have caught the robber.     
	\end{proof}

	\section{Multi-Layer Analogue of Meyniel's Conjecture}\label{sec:men}
	
	For the classical cop number, Meyniel’s Conjecture~\cite{Frankl87} states that  $\mathcal{O}(\sqrt{n})$ cops are sufficient to win cops and robbers on any connected graph $G$. Following earlier results \cite{Chiniforooshan08,Frankl87} the current best bound stands at \[n\cdot 2^{-(1-o(1))\sqrt{\log_2 n}},\] proved by \cite{FriezeKL12,LuP12,ScottS11} independently, see \cite[Chapter 3]{Bonato2011Book} for a more detailed overview. 
	
	It is natural to explore analogues of Meyniel’s Conjecture for the multi-layer cop number. The question we will consider is the minimum number of cops needed to patrol any multi-layer graph with $\layers$ connected layers. That is, we seek a bound on the extremal multi-layer cop number $\maxcop{\layers}{G}$  which holds for all connected graphs $G$. The cycle example from \cref{rem:cycleexample} and the clique result from \cref{thm:complete} show that if layers can be disconnected, or $\layers$ can grow arbitrarily with $n$, then no bound better than $\mathcal{O}(n)$ can hold.  We conjecture that this is not the case when layers are connected and the number $\layers$ of them is bounded. 
	\begin{conjecture}\label{multimeyniel}Let $\layers\geq 1$ be any fixed integer and  $G$ be any $n$-vertex connected graph. Then, we have \[\maxcop{\layers}{G} =o(n).\]   
	\end{conjecture}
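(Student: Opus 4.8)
The plan is to split connected graphs $G$ into an easy regime handled by existing tools and a hard regime where one must leverage long-range domination. \textbf{Easy regime.} If $\trw(G) = \lo{n}$ then $\maxcop{\layers}{G} \leq \trw(G) = \lo{n}$ by the treewidth proposition above; and if $\delta(G) \to \infty$ then for every $\mlg = (V,\{C_1,\dots,C_\layers\},*) \in \mathcal{L}(G,\layers)$ each layer is connected and spanning, so $\delta(\mlg) = \min_v \sum_i d_{(V,C_i)}(v) \geq \max\{\layers,\delta(G)\} \to \infty$, whence Lemma \ref{lem:copdom} (after replacing the robber layer by $\binom{V}{2}$ via Proposition \ref{prop:subsetrobber}) together with Proposition \ref{lem:domnum} gives $\mcop{\mlg} \leq \gamma(\mlg) = \BO{\tfrac{n\layers}{\delta(G)}\bigl(\log\tfrac{\delta(G)}{\layers}+1\bigr)} = \lo{n}$ for fixed $\layers$; maximising over $\mlg$ closes this case. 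So the conjecture reduces to graphs with $\trw(G) = \BOhm{n}$ and $\delta(G) = \BO{1}$ --- bounded-degree, expander-like graphs, where all the difficulty lies.

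\textbf{Hard regime.} Fix such a $G$ and any $\mlg = (V,\{C_1,\dots,C_\layers\},*) \in \mathcal{L}(G,\layers)$, and introduce, for a slowly growing $r = r(n) \to \infty$, the notion of a \emph{distance-$r$ multi-layer dominating configuration}: a multiset $\mathcal{D}_r \subseteq V \times [\layers]$ such that every $w \in V$ satisfies $d_{(V,C_i)}(v,w) \leq r$ for some $(v,i) \in \mathcal{D}_r$. Running the probabilistic argument of Proposition \ref{lem:domnum} with radius-$r$ balls in each $(V,C_i)$ in place of closed neighbourhoods, and using that $|B_{(V,C_i)}(v,r)| \geq \min\{r+1,n\}$ since every layer is connected, yields $|\mathcal{D}_r| = \BO{\tfrac{n\layers(\log r + 1)}{r}} = \lo{n}$ for fixed $\layers$. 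Thus $\lo{n}$ cops suffice to guarantee that, at the \emph{start} of the game, several cops are each within distance $r$ of the robber \emph{in their own layer}. The remaining --- and decisive --- task is to turn this starting configuration into a capture while keeping the cop count at $\lo{n}$; note that for $r\geq 2$ the bound $|\mathcal{D}_r|$ is emphatically \emph{not} an upper bound on $\mcop{\mlg}$, since after the robber's first move the convergence must be restarted.

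The template is the geodesic-chasing / shadow argument underlying single-layer Meyniel-type bounds: a cop within $C_i$-distance $r$ of the robber commits to walking a shortest $C_i$-path toward the robber, while the remaining cops maintain a refreshed version of the distance-$r$ domination invariant; if the committed cop's $C_i$-distance to the robber fell by one each round, capture would follow in $\BO{r}$ rounds and the robber's safe region would strictly shrink with each completed phase, terminating after $\lo{n}$ phases. \textbf{The main obstacle} --- which I do not expect to be a routine adaptation --- is that the robber moves along edges of the union layer $R = C_1 \cup \cdots \cup C_\layers$, not along $C_i$, so a single robber step can increase its $C_i$-distance to the chasing cop by much more than one, and the committed cop can fall behind. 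Two partial remedies suggest themselves: a potential/accounting argument bounding how often the robber can ``leave the $C_i$-metric'' before being cornered, and spreading cops across all $\layers$ layers so that whichever layer the robber's escape route uses is already covered; reconciling either with the $\lo{n}$ budget --- rather than the trivial $\BO{n}$ --- almost certainly needs a structural dichotomy controlling how the $\layers$ connected layers may interleave, in the spirit of the locally-sparse-versus-high-degree split in the classical proofs. Theorem \ref{thm:no-upper-bound-by-cop-number-of-layers} shows this interleaving alone can already push the multi-layer cop number to a polynomial in $n$, so any such dichotomy must be delicate, and it is entirely possible that eliminating the residual $\Theta(\log n)$-type gap left by the construction of Section \ref{sec:men} requires a genuinely new idea --- which is why we state only a conjecture.
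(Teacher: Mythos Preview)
This statement is a \emph{conjecture} in the paper, not a theorem: the paper offers no proof. The only partial result the paper establishes is the special case $\delta(G)/\layers \to \infty$ (the Proposition immediately following the conjecture), proved exactly via the domination-number route you describe in your easy regime. Your additional treewidth observation --- that $\trw(G) = o(n)$ already suffices via the paper's own bound $\maxcop{\layers}{G} \leq \trw(G)$ --- is correct and is a case the paper does not explicitly isolate, so your reduction to ``linear treewidth and bounded minimum degree'' is a mild sharpening of what the paper states.

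Your hard-regime discussion is not a proof, and you say so. The distance-$r$ multi-layer domination idea is natural, and the obstruction you flag --- that a single robber step in $R = C_1 \cup \cdots \cup C_\layers$ can increase the robber's $C_i$-distance to a committed cop by far more than one --- is exactly why the paper remarks that shortest-path/geodesic-guarding machinery does not transfer and leaves the question open. So there is no gap to name beyond the one you already name: the general case is unresolved in the paper and in your proposal alike. One minor imprecision worth noting: $\delta(G) = \BO{1}$ constrains only the \emph{minimum} degree, so the residual class is not literally ``bounded-degree, expander-like''; this does not affect the substance, since the cops-bane family witnessing near-tightness of the conjecture is in fact bounded-degree.
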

	This conjecture might seem very modest in comparison to Meyniel's conjecture, however the following result shows that it would be almost tight, even for the case $\layers =2$ of just two cop layers. 
	\begin{theorem}\label{thm:meynielcounter} For any positive integer $n$ there is an $n$-vertex multi-layer graph $\mlg=(V, \{C_1,C_2\}, *)$ such that $|V|=\Theta(n)$, each cop layer is connected and has bounded cop-number, yet \[\mcop{\mathcal{G}} = \Omega\left(\frac{n}{\log n}\right).\] 
	\end{theorem}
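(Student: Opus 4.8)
The plan is to realise $\mlg$ from a bounded‑degree \emph{expander}. Take two Hamiltonian cycles $C_1,C_2$ on the vertex set $V=[n]$ whose union is a $4$‑regular expander; such pairs exist (a uniformly random $4$‑regular graph is whp an expander admitting a decomposition into two Hamilton cycles, or one may use an explicit Ramanujan graph with a Hamiltonian decomposition). Set $\mlg:=(V,\{C_1,C_2\},*)$, so $R=C_1\cup C_2=\flatten{\mlg}$. Then $|V|=n$, each layer $(V,C_i)$ is a cycle — hence connected with $\cop{(V,C_i)}=2$ — and $\flatten{\mlg}$ has diameter $\BO{\log n}$. Everything but the lower bound $\mcop{\mlg}=\BOhm{n/\log n}$ is then immediate, so the task reduces to giving a robber strategy that evades any $k=\eps_0 n/\log n$ cops for a small constant $\eps_0>0$ chosen at the end.

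The strategy exploits a speed asymmetry: the robber moves in $\flatten{\mlg}$, where (even after deleting a small constant fraction of vertices) most pairs of vertices are within distance $r:=\alpha\log n$ for a suitable constant $\alpha$, whereas a cop confined to the cycle $C_i$ advances by at most one position along $C_i$ per turn. Fix such an $r$. The robber maintains the invariant that, while ``resting'' at its current vertex $b$, every $C_i$‑cop is at $C_i$‑cyclic distance $>3r$ from $b$ for $i=1,2$; it starts the game at such a $b$, which exists since the $k$ cops forbid at most $k(6r+1)<n$ vertices. It stays put until some cop reaches $C_i$‑distance $2r$ from $b$ (which takes $>r$ turns, as distances drop by at most one per turn), then relocates: it chooses a target $b'$ at $C_i$‑distance $>4r$ from every $C_i$‑cop (possible since these balls cover at most $k(8r+1)<n$ vertices) lying in the giant component of $\flatten{\mlg}\setminus D$, where $D$ is the union of the radius‑$r$ $C_i$‑balls around all $C_i$‑cops, and walks from $b$ to $b'$ along a path of length $\le r$ inside $\flatten{\mlg}\setminus D$. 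Such a path exists because $|D|\le k(2r+1)$ is a small constant fraction of $V$, so the fault‑tolerant diameter of the expander applies. No cop can intercept this walk: a $C_i$‑cop is at $C_i$‑distance $>r$ from every vertex of the path at the start of the relocation, hence needs $>r$ turns to reach any of them, but the walk lasts $\le r$ turns. After the walk every cop has moved $\le r$ positions, so $b'$ is still at $C_i$‑distance $>3r$ from every $C_i$‑cop, the invariant is restored, and the robber repeats forever.

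The main obstacle is the ``fault‑tolerant small diameter'' input: one needs that after deleting an adversarially chosen set of at most $\eps_0' n$ vertices, a $(1-o(1))$‑fraction of $\flatten{\mlg}$ still lies in one component of diameter $\BO{\log n}$, and that the robber's resting vertex $b$ always lies in that surviving core — which shifts with the cop positions. This robustness is exactly what good expanders provide (vertex expansion forces the giant component to lose only an $O(|D|/n)$‑fraction of vertices and keeps its diameter logarithmic); I would prove it by a standard expander‑mixing/vertex‑expansion argument and handle the ``$b$ stays in the core'' point by having the robber always relocate into a fixed robust $(1-o(1))$‑core of $\flatten{\mlg}$. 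The constants must be fixed in the right order: choose the expander first, extract its diameter constant $\alpha$ and a fault‑tolerance threshold $\eps_0'$, set $r=\alpha'\log n$ with $\alpha'$ slightly larger than $\alpha$, and finally take $k=\eps_0 n/\log n$ small enough that every counting inequality of the form $k\cdot(\text{const}\cdot r)<\eps_0' n<n$ holds; the remaining bookkeeping (initial placement, the $>r$‑turn buffer between relocations, restoring the invariant) is routine.
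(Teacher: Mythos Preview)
Your construction is genuinely different from the paper's and, if the analysis is completed, cleaner. The paper takes a $3$-regular expander $X$ on $N$ vertices, uses a clustered $2$-edge-colouring to split $E(X)=E_1\cup E_2$ into monochromatic components of bounded size, and then hangs a subdivided star $S$ (arms of length $\Theta(\log N)$, centre $\infty$) off the vertices of $X$; cop layer $C_i$ is $E_i\cup E_S$. The star forces any cop wishing to switch which bounded component it threatens to detour through $\infty$, so each cop blocks only $O(1)$ vertices of $X$ at any time. You obtain the analogous ``slow cop'' effect for free from the linearity of cycle distance, avoiding both the star (and the resulting $\Theta(N\log N)$ blow-up in $|V|$) and the clustered-colouring input.

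There is, however, a real gap at precisely the point you flag. Your invariant---each $C_i$-cop at $C_i$-distance $>3r$ from the single vertex $b$---does \emph{not} force $b$ to lie in the giant low-diameter component of $\flatten{\mlg}\setminus D$. In your random construction the two $C_1$-neighbours of $b$ are, with high probability, at $C_2$-cyclic distance $\gg r$ from $b$, so two $C_2$-cops can sit within $C_2$-distance $r$ of them (placing both into $D$) while remaining at $C_2$-distance $>3r$ from $b$; two $C_1$-cops symmetrically cover the $C_2$-neighbours of $b$. Now all four $\flatten{\mlg}$-neighbours of $b$ lie in $D$, $b$ is isolated in $\flatten{\mlg}\setminus D$, and no relocating path exists. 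Your proposed patch of a ``fixed robust $(1-o(1))$-core'' cannot work as stated: any single vertex can be cut off by deleting its four neighbours, so the surviving core necessarily depends on which vertices are removed. The correct repair is exactly the device the paper uses: strengthen the invariant to ``the robber lies in a subgraph $H$ with $|H|>n/2$ and diameter $\le r$, \emph{every} vertex of which is at $C_i$-distance $>r$ from every $C_i$-cop''. Such an $H$ exists by applying the fault-tolerant expander theorem to the radius-$2r$ balls; since any two such subgraphs each contain more than half of $V$ they intersect, so at each epoch the robber walks inside the old $H$ (in $\le r$ steps, during which no cop can enter $H$) to a vertex of the new one, restoring the invariant. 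With this modification your argument goes through.
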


	Many of the current approaches to Meyniel’s Conjecture use some variation of the fact that a single cop can guard any shortest path between any two vertices. For example, the first step of the approach in \cite{ScottS11} is to iteratively remove long geodesics until the graph has small diameter (following this a more sophisticated argument matching randomly placed cops to possible robber trajectories is applied). What makes Conjecture \ref{multimeyniel} difficult to approach is that, even for two layers, a shortest path in the flattened graph $\flatten{\mathcal{G}}$ may not live within a single cop layer. We note that \cite{FriezeKL12} uses a different approach based on expansion, making it more versatile,  however the authors were unable to apply it in the multi-layer setting. We believe a new approach is needed, hopefully progress on this problem might also give an insight into the classical Mayniel conjecture. 
	
	Nevertheless, using a simple dominating set approach we can at least prove Conjecture \ref{multimeyniel} for multi-layer graphs with diverging minimum degree. 
	
	\begin{proposition}For any connected $n$-vertex graph $G$ and $\layers:=\layers(n)\geq 1$ satisfying $\delta(G) / \layers  \rightarrow \infty $,  we have \[\maxcop{\layers}{G} =o(n).\]
	\end{proposition}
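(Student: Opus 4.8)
The plan is to reduce everything to the domination-number machinery of Section~\ref{sec:mlds}. Fix an arbitrary $\mlg = (V, \{C_1,\ldots,C_\layers\}, *) \in \mathcal{L}(G,\layers)$. Since the robber layer $*$ is $C_1 \cup \cdots \cup C_\layers = E(G) \subseteq \binom{V}{2}$, Proposition~\ref{prop:subsetrobber} followed by Lemma~\ref{lem:copdom} gives $\mcop{\mlg} \leq \gamma\big((V,\{C_1,\ldots,C_\layers\})\big)$, and then Proposition~\ref{lem:domnum} bounds this by
\[ \gamma\big((V,\{C_1,\ldots,C_\layers\})\big) \leq \frac{n\layers}{\layers + \delta(\mlg)}\left(\ln\left(\frac{\layers+\delta(\mlg)}{\layers}\right)+1\right). \]
So it remains to control this right-hand side uniformly over all layerings of $G$.

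The next step is to replace $\delta(\mlg)$ by the layering-independent quantity $\delta(G)$. Writing $t := (\layers+\delta(\mlg))/\layers \geq 1$, the bound above equals $n\cdot\frac{\ln t + 1}{t}$, and $\frac{d}{dt}\frac{\ln t+1}{t} = \frac{-\ln t}{t^2} \leq 0$ for $t\geq 1$, so the bound is nonincreasing in $\delta(\mlg)$. Because each edge of $G$ occurs in at least one layer, Proposition~\ref{prop:multmin>flatmin} gives $\delta(\mlg) \geq \delta(\flatten{\mlg}) = \delta(G)$; substituting this lower bound and then maximising over $\mathcal{L}(G,\layers)$ yields
\[ \maxcop{\layers}{G} \leq \frac{n\layers}{\layers + \delta(G)}\left(\ln\left(\frac{\layers+\delta(G)}{\layers}\right)+1\right). \]
Setting $s := \delta(G)/\layers$, the right-hand side is precisely $n\cdot\frac{\ln(1+s)+1}{1+s}$; the hypothesis $\delta(G)/\layers \to \infty$ forces $s\to\infty$, and $\frac{\ln(1+s)+1}{1+s}\to 0$, so $\maxcop{\layers}{G} = o(n)$.

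There is no genuine obstacle: the proof is an assembly of results already proved. The one point that needs a little care is the monotonicity argument — checking that the bound of Proposition~\ref{lem:domnum} is decreasing in the minimum-degree parameter — so that the uniform estimate $\delta(\mlg)\geq\delta(G)$ may legitimately be plugged in before taking the maximum over the (potentially enormous) family $\mathcal{L}(G,\layers)$. Everything else is a one-line asymptotic.
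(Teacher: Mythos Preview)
Your proof is correct and follows essentially the same route as the paper's: bound $\mcop{\mlg}$ by the multi-layer domination number via Lemma~\ref{lem:copdom}, then invoke Proposition~\ref{lem:domnum} and Proposition~\ref{prop:multmin>flatmin} to replace $\delta(\mlg)$ by $\delta(G)$, and finish with the $\ln(x)/x \to 0$ asymptotic. You have in fact spelled out two steps the paper leaves implicit --- the appeal to Proposition~\ref{prop:subsetrobber} (since Lemma~\ref{lem:copdom} is stated for robber layer $\binom{V}{2}$ rather than $E(G)$) and the monotonicity check justifying the substitution $\delta(\mlg) \mapsto \delta(G)$ --- so your version is, if anything, more complete.
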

	\begin{proof} If $\mathcal{G}$ is any multi-layer graph such that $\flatten{\mlg}=G$ then $\delta(\mlg)/\layers \geq \delta(G)/\layers \rightarrow \infty$ by \cref{prop:multmin>flatmin} and hypothesis. The result then follows by  Lemma~\ref{lem:copdom} since $\ln(x)/x\rightarrow 0 $ as $x\rightarrow \infty$.  
	\end{proof}
	
	Note that in the proof of this result we did not actually need the graph/layers to be connected. 
	
	\subsection*{Proof of Theorem \ref{thm:meynielcounter}} To begin, we introduce the \textit{cops-bane} family of multi-layer graphs, denoted by $\mathcal{CB}(\delta,N)$ for $\delta\in(0,1)$ and $N\geq 1$.
	We show that, for suitable $\delta,N$, the family $\mathcal{CB}(\delta,N)$ is non-empty and any $\mathcal{G} \in \mathcal{CB}(\delta,N)$ requires $\Omega(N)$ cops to guard, this establishes Theorem \ref{thm:meynielcounter} since each graph in $\mathcal{CB}(\delta,N)$ has $\Theta(N\log N)$ vertices.

	Before we define cop-bane multi-layer graphs we will introduce some graph theoretic notions used in the construction and proof, in particular we will make use of clustered colourings \cite{WoodSurvey}. Given an assignment of colours to the vertices (respectively edges) of a graph $G$, a monochromatic component is a connected component of the subgraph induced by all the vertices (resp.\ edges) assigned to a single colour. A graph $G$ is $k$-vertex-colourable (resp. $k$-edge-colourable) with clustering $c$ if each vertex (resp.\ edge) can be assigned one of $k$ colours such that each monochromatic component has at most $c$ vertices.  
	
	\begin{lemma}[c.f.\ \cite{Haxell}]\label{lem:hax}
		Any graph of maximum degree at most $3$ has a  $2$-edge-colouring with clustering $2000$. 
	\end{lemma}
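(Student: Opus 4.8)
The plan is to translate the edge-colouring problem on $G$ into a vertex-partition problem on the line graph $L:=L(G)$ and then invoke a bounded-component colouring result of the type in \cite{Haxell}. First note that a $2$-edge-colouring of $G$ is exactly a $2$-partition $V(L)=A\cup B$, since a vertex of $L$ is an edge of $G$. If $K$ is a monochromatic component of the edge-colouring of $G$, then the set of edges of $K$ is the vertex set of a connected component of $L[A]$ or of $L[B]$; if that component of $L$ has $m$ vertices, then $K$ has $m$ edges, and being connected it spans at most $m+1$ vertices of $G$. Hence it suffices to find a $2$-partition of $V(L)$ in which every monochromatic component has at most $1999$ vertices: this forces every monochromatic component of the resulting edge-colouring of $G$ to span at most $2000$ vertices.

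Second, observe that $L$ has small maximum degree: an edge $uv$ of $G$ is adjacent in $L$ only to the $(d_G(u)-1)+(d_G(v)-1)\le 4$ other edges meeting $u$ or $v$, so $L$ has maximum degree at most $4$. I would then apply the partition theorem of Haxell, Szab\'o and Tardos (the result behind \cite{Haxell}): a graph of maximum degree at most $d$ can be partitioned into $\lceil (d+1)/3\rceil$ vertex classes, each inducing a subgraph all of whose connected components have order at most an absolute constant depending only on $d$. For $d=4$ this yields $\lceil 5/3\rceil = 2$ classes with a component bound that their proof keeps comfortably below $1999$, which together with the reduction above proves the lemma with clustering $2000$.

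If one instead wants a self-contained argument for the case at hand (maximum degree at most $4$, two classes), I would run a local-switching / potential argument: fix a large constant threshold, take a $2$-partition $(A,B)$ of $V(L)$ minimising, say, the number of pairs $(v,K)$ with $v$ lying in a monochromatic component $K$ whose size exceeds the threshold, and show that an oversized monochromatic component always lets one move a small boundary set of vertices to the other class and strictly decrease this count. The delicate point — and the reason the threshold must be a largeish constant rather than something tiny like $2$ — is verifying that repairing one oversized monochromatic component cannot create a comparably large one in the opposite colour class; this is what forces the argument to reason about whole components instead of merely vertex degrees, and it is the only genuinely non-routine step. Consequently, I expect the real obstacle in a fully explicit proof to be extracting a clean numerical bound strictly below $1999$, which is precisely why deferring to the known bounded-component partition estimate is the economical route.
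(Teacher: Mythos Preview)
Your approach is essentially the same as the paper's: pass to the line graph (which has maximum degree at most $4$), invoke the Haxell--Szab\'o--Tardos bounded-clustering vertex-colouring result, and pull the resulting $2$-partition back to a $2$-edge-colouring of $G$. If anything you are more careful than the paper, which simply asserts that the clustering transfers unchanged without tracking the $m$-edge versus $(m{+}1)$-vertex discrepancy you point out; the alternative local-switching sketch is unnecessary.
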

	\begin{proof}We begin by constructing the line graph of $G$, which is the graph $H$ with $V(H)=E(G)$ and $E(H)=\{xy: x \text{ and }y \text{ share an endpoint in }G \}.$ Observe that $H$ has maximum degree at most $4$ since $G$ has maximum degree at most $3$. Haxell, Szab\'o and Tardos \cite{Haxell} showed that every graph of maximum degree at most $5$ has a $2$-vertex-colouring with clustering at most $2000$. We take such a vertex colouring of $H$, which translates to a $2$-edge-colouring of $G$ with the same clustering.  
	\end{proof}
	For $0\leq \alpha\leq 1 $, a graph $G=(V,E)$ is an \textit{$\alpha$-vertex-expander} if for every $S \subseteq V$ of size at most $|V|/2$ there are at least $\alpha|S|$ vertices of $ V \backslash S$ connected by an edge to $S$, that is, \[\min_{S\subseteq V : |S|\leq n/2}\frac{|\{v\in V\backslash S : u\in S, uv\in E\}|}{|S|}\geq  \alpha .\]
	
	We make use of the following well known fact. 
	
	\begin{lemma}[{\cite[Page 455]{ExpanderSurvey}}]\label{lem:expdiam} For any constant $0<\alpha\leq 1$ there exists a constant $C_{\mathsf{diam}}(\alpha)$  such that any $n$-vertex $\alpha$-vertex-expander is connected and has diameter at most $C_{\mathsf{diam}}(\alpha)\cdot \log n$.
	\end{lemma}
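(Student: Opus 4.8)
The plan is to prove both assertions by a single ball-growing argument that converts vertex expansion into geometric growth of neighbourhoods. Throughout, for a set $S\subseteq V$ write $\partial S := \{v\in V\setminus S : uv\in E \text{ for some } u\in S\}$ for its outer vertex boundary, so that the $\alpha$-expander hypothesis reads $|\partial S|\ge \alpha|S|$ whenever $|S|\le n/2$. For a vertex $v$ let $B_r(v)$ denote the set of vertices at distance at most $r$ from $v$; the identity we exploit is $B_{r+1}(v)=B_r(v)\cup \partial B_r(v)$, so that $|B_{r+1}(v)| = |B_r(v)| + |\partial B_r(v)|$. Connectivity is then immediate: if $G$ were disconnected it would have a connected component $C$ with $1\le |C|\le n/2$, and any such component satisfies $\partial C=\emptyset$, contradicting $|\partial C|\ge \alpha |C|\ge \alpha>0$.

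For the diameter bound I would first establish that balls grow by a constant factor until they capture half the graph. Fix any $v$ and let $r_0=r_0(v)$ be the least radius with $|B_{r_0}(v)|>n/2$. For every $r<r_0$ we have $|B_r(v)|\le n/2$, so the expander hypothesis applies to $S=B_r(v)$ and yields
\[ |B_{r+1}(v)| = |B_r(v)| + |\partial B_r(v)| \ge (1+\alpha)\,|B_r(v)|. \]
Iterating from $|B_0(v)|=1$ gives $|B_r(v)|\ge (1+\alpha)^r$ for all $r\le r_0$. In particular $(1+\alpha)^{r_0-1}\le |B_{r_0-1}(v)|\le n/2$, so $r_0\le 1+\log_{1+\alpha}(n/2)=O(\log n)$, with the implied constant depending only on $\alpha$ through the factor $1/\log(1+\alpha)$.

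I would then close the argument by a pigeonhole/intersection step. For any two vertices $u,w$, both $B_{r_0(u)}(u)$ and $B_{r_0(w)}(w)$ have size strictly greater than $n/2$, so their sizes sum to more than $n$ and the two sets cannot be disjoint; choosing a common vertex $x$ gives $d(u,w)\le d(u,x)+d(x,w)\le r_0(u)+r_0(w)$. Setting $R:=\max_v r_0(v)=O(\log n)$ bounds the diameter by $2R$, and taking $C_{\mathsf{diam}}(\alpha):=3/\log(1+\alpha)$ (say) absorbs the additive constants and rounding for all sufficiently large $n$, which suffices for the stated form.

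The argument is essentially routine, so there is no deep obstacle; the only point requiring genuine care is the bookkeeping at the threshold $n/2$. One must apply the expansion inequality solely to balls of size at most $n/2$ (the hypothesis is silent about larger sets), halt the geometric growth precisely at the first radius $r_0$ where a ball exceeds $n/2$, and then invoke the complementary fact that two sets each of size greater than $n/2$ must intersect. Reconciling the small-$n$ and floor/rounding issues so that one constant $C_{\mathsf{diam}}(\alpha)$ works uniformly in $n$ is the only mildly fiddly part.
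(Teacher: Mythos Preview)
Your proof is correct and is exactly the standard ball-growing argument; the paper does not give its own proof of this lemma but merely cites \cite{ExpanderSurvey}, where the same argument appears. There is nothing to compare.
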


	For a constant $\alpha>0$ and even integer $N\geq 2$ we define the cops-bane family $\mathcal{CB}(\alpha,N)$ to be all $N$-vertex multi-layer graphs $\mathcal{G}$ with robber layer $R$ and cop layers $C_1$ and $C_2$ generated as follows.
	Let $X=(V_X,E_1\cup E_2)$ be a $3$-regular $\alpha$-vertex-expander on $N$ vertices where $E_1$ and $E_2$ form the colour classes of a $2$-edge-colouring with clustering at most $2000$. Let $ D=\lceil C_{\mathsf{diam}}(\alpha/2)\cdot \log N\rceil  $ where $C_{\mathsf{diam}}(\alpha/2)$ is the constant defined in Lemma \ref{lem:expdiam}. Additionally let $S=(V_S,E_S)$ be a star on $N+1$ vertices where we label the centre vertex of the star as $\infty$, and each edge in the star has been replaced by a path of $2D+1$ edges, we call such a path an \textit{arm}. We identify each leaf of $S$ with a unique element of $V_X$. We then set $C_1 = E_1 \cup E_S$ and $C_2 = E_2 \cup E_S$ as the two cop layers. Finally, let $R=E_1 \cup E_2 $, and then $\mlg=(V_X \cup V_S, \{C_1, C_2\}, R)$, where $|V_X \cup V_S| =\Theta(N \log N)$. See Figure \ref{fig:copsbane} for an example of a cops-bane multilayer graph. 
	
	\begin{figure}
		\centering
		
		\begin{tikzpicture}[xscale=.5,yscale=.5, node/.style={thick,circle,draw=black,minimum size=.1cm,fill=white},dotnode/.style={thick,dotted,circle,draw=black,minimum size=.1cm,fill=white},edge0/.style={line width=3pt,col0}, edge1/.style={line width=3pt,col1},edge2/.style={line width=3pt,col2},edge3/.style={line width=3pt,col3},edge4/.style={line width=3pt,col4},edge5/.style={line width=3pt,col5},edge6/.style={line width=3pt,col6},edgea/.style={line width=3pt,cola},edgeb/.style={line width=3pt,colb},edgec/.style={line width=3pt,colc},edgeS/.style={line width=3pt,col1,dotted}]
			
			\def \radius {3cm}
			\foreach \s in {0,...,7}
			{
				\node[node] (\s) at ({360/8 * (- \s +  2 ) -360/16 }:\radius)  { };;
			}
			
			\node[node] (s) at ({180}:3*\radius)  {};;
			
			\draw[edgec] (7) to (0);
			\draw[edgea] (1) to (2);
			\draw[edgec] (3) to (4);
			\draw[edgea] (5) to (6);
			
			\draw[edgea] (0) to (1);
			\draw[edgea] (2) to (3);
			\draw[edgea] (4) to (5);
			\draw[edgea] (6) to (7);   	
			
			\draw[edgec] (0) to (2);
			\draw[edgec] (1) to (4);
			\draw[edgec] (5) to (7);
			\draw[edgec] (6) to (3);	 	
			
			\draw[edgeS] (s) to (4);
			\draw[edgeS] (s) to (7);
			\draw[edgeS] (s) to (6);
			\draw[edgeS] (s) to (5);  
			\draw[edgeS] (s) to[out=30,in=165] (0);
			\draw[edgeS] (s) to[out=50,in=110] (1); 	 		\draw[edgeS] (s) to[out=-50,in=-110] (2); 	
			\draw[edgeS] (s) to[out=-30,in=-165] (3); 	 		    	\end{tikzpicture}\vspace{-15pt}
		\caption{A cops-bane multi-layer graph from the family $\mathcal{CB}(3/4,8)$. The dotted yellow edges represent paths of length $2D+1$, where $D=\lceil C_{\mathsf{diam}}(3/8)\cdot \log 8\rceil  $ and $C_{\mathsf{diam}}(3/8)$ is from Lemma~\ref{lem:expdiam}. The robber layer contains all edges, then each of the two cop layers consists of the orange dotted edges plus all of either the red or blue edges. The graph with just orange dotted edges is $S$, the graphs with just red and blue edges is the $3/4$-vertex-expander $X$. The red and blue edges are a $2$-edge-coloring of $X$ with clustering $3$.}\label{fig:copsbane}
		
	\end{figure}
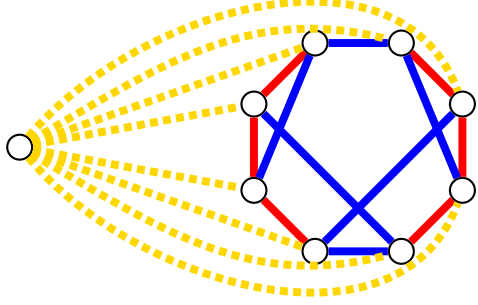
	\begin{lemma}\label{prob:Existscopsbane} There exists a constant $0<\alpha\leq 1$ such that for any sufficiently large even integer $N \geq 4$ the family $\mathcal{CB}(\alpha,N)$  is non-empty.
	\end{lemma}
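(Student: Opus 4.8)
The plan is to reduce the claim to a single known existence statement about expanders, since every other ingredient of the cops-bane construction is deterministic. First I would note that, given any $3$-regular graph $X$ on $N$ vertices, Lemma~\ref{lem:hax} already supplies a $2$-edge-colouring of $X$ with clustering at most $2000$, so the two colour classes $E_1,E_2$ demanded by the definition of $\mathcal{CB}(\alpha,N)$ come for free; and once $X$ together with its colouring is fixed, the rest of the construction --- choosing $D=\lceil C_{\mathsf{diam}}(\alpha/2)\cdot\log N\rceil$ via Lemma~\ref{lem:expdiam}, taking the star $S$ on $N+1$ vertices, subdividing each of its edges into an arm of $2D+1$ edges, identifying the $N$ leaves bijectively with $V_X$, and setting $C_1=E_1\cup E_S$, $C_2=E_2\cup E_S$, $R=E_1\cup E_2$, and $\mlg=(V_X\cup V_S,\{C_1,C_2\},R)$ --- can always be carried out without further constraints. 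Hence it suffices to prove: there is a constant $\alpha\in(0,1]$ such that for every sufficiently large even $N$ there exists a $3$-regular $\alpha$-vertex-expander on $N$ vertices.

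For that step I would appeal to the standard fact that random regular graphs of degree at least $3$ are expanders: for a suitable absolute constant $\alpha$, a uniformly random $3$-regular graph on $N$ vertices is an $\alpha$-vertex-expander with probability tending to $1$ as $N\to\infty$, and in particular such a graph exists for all large even $N$ (note that $N$ must be even for a $3$-regular graph to exist, which matches the hypothesis of the lemma). If a self-contained argument is preferred, the usual route is a configuration-model union bound: fix a small $\alpha$, and for each $s\le N/2$ bound the probability that some $s$-set $S$ has its outside-neighbourhood contained in some fixed $\lceil\alpha s\rceil$-set $T$ by revealing the matches of the $3s$ half-edges of $S$ one at a time, which gives a factor of at most $\big(c(1+\alpha)s/N\big)^{3s/2}$ for an absolute constant $c$; multiplying by the $\binom{N}{s}\binom{N}{\lceil\alpha s\rceil}$ choices of $S$ and $T$ and summing over $s$ yields $o(1)$ provided $\alpha$ is small enough (the dense regime $s=\Theta(N)$ needs the slightly sharper standard bounds, e.g.\ via the spectral gap of random $3$-regular graphs, after which edge expansion divided by the maximum degree $3$ gives vertex expansion), and one then conditions on the constant-probability event that the configuration is simple.

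Assembling the pieces: take such an $X$, equip it with a colouring from Lemma~\ref{lem:hax}, build $\mlg$ exactly as in the definition, and conclude that $\mathcal{G}\in\mathcal{CB}(\alpha,N)$, so $\mathcal{CB}(\alpha,N)$ is non-empty for all sufficiently large even $N$. The only genuine obstacle is the expander-existence step, and within it the bookkeeping for the dense regime in the union bound; everything after that is a transcription of the construction. If one is content to cite rather than reprove, even this obstacle disappears, since the existence of $3$-regular expanders of every even order is classical.
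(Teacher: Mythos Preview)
Your proposal is correct and follows essentially the same route as the paper: the paper's proof cites Kolesnik and Wormald for the existence of a fixed $\alpha>0$ such that a random $3$-regular graph is an $\alpha$-vertex-expander with high probability (hence such graphs exist for all large even $N$), and then invokes Lemma~\ref{lem:hax} for the clustered $2$-edge-colouring, just as you do. Your additional sketch of a configuration-model union bound is more than the paper provides, but the overall strategy---reduce to the existence of $3$-regular expanders plus Lemma~\ref{lem:hax}, with the remainder of the cops-bane construction being automatic---is identical.
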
 
	\begin{proof} Kolesnik and Wormald \cite{KW} show that there exists a fixed $\alpha>0$ such that a random $3$-regular graph is an $\alpha$-vertex-expander with high probability. Thus, for any suitably large $N$ there exists a $3$-regular $N$-vertex $\alpha$-vertex-expander. By \cref{lem:hax} there is a $2$ colouring with clustering $2000$.  
	\end{proof}
	
	We now show that, owing to their simple structure, the cop layers are easy to police. 
	
	\begin{lemma}For any $(V_X\cup V_S, \{C_1,C_2\}, R)\in \mathcal{CB}(\alpha,N)$  we have $\cop{(V,C_1)}, \cop{(V,C_2)} \leq 2001$. 
	\end{lemma}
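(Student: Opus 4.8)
The plan is to establish $\cop{(V,C_1)}\le 2001$; the bound for $(V,C_2)$ then follows by the identical argument, since $E_2$ is also a colour class of a $2$-edge-colouring of $X$ with clustering at most $2000$. The observation I would exploit is that $(V,C_1)=(V_X\cup V_S,\,E_1\cup E_S)$ has a very restricted shape: it is the spider $S$ — the vertex $\infty$ joined to the $N$ vertices of $V_X$ by internally vertex-disjoint arms, each a path of $2D+1$ edges — with the edges of $E_1$ added on top, and these edges only join leaves of $S$ (vertices of $V_X$) and induce monochromatic components each spanning at most $2000$ vertices. Note first that $(V,C_1)$ is connected, as $S$ is a connected spanning subgraph.

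First I would station one cop permanently on $\infty$. Every edge of $C_1$ not in $E_S$ lies in $E_1$, hence joins two leaves of $S$, so once $\infty$ is occupied the arms are separated from one another except through the $E_1$-edges sitting at the leaves. I would record this as an invariant: if the robber is ever on the arm ending at a leaf $v\in V_X$, and $K$ is the monochromatic $E_1$-component containing $v$ — with the convention $K=\{v\}$ when $v$ meets no edge of $E_1$, so that $|K|\le 2000$ always — then for the rest of the game the robber is confined to
\[
B_K \;:=\; E_1[K]\;\cup\;\bigcup_{v_j\in K}\bigl(\text{arm }j \setminus\{\infty\}\bigr),
\]
which is simply the small graph $E_1[K]$ (on at most $2000$ vertices) with a pendant path of $2D$ edges attached at each of its vertices, and distinct $E_1$-components give vertex-disjoint such regions.

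Second, with the remaining $2000$ cops I would catch the robber inside its region $B_K$. Starting all of them at $\infty$, then once the robber has committed to some $B_K$, I march one cop down each of the (at most $2000$) arms whose tips lie in $K$; because the cop on $\infty$ never moves, the robber can never leave $B_K$, so after $2D$ rounds every vertex of $K$ carries a cop. The robber is then forced onto some pendant path, say the one on arm $j$; the cop on $v_j$ now walks up that path toward the robber while all other cops hold their vertices of $K$. Since the robber cannot pass this cop, cannot re-enter $E_1[K]$ (fully occupied), and cannot reach $\infty$ (occupied), it is trapped on a finite path and captured. The total number of cops used is $1+|K|\le 2001$.

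I do not expect a genuine obstacle here; the two points needing a little care are (i) verifying that the single occupied vertex $\infty$ really makes $B_K$ an invariant region for the robber — immediate once one notes every non-$\infty$ vertex lies on exactly one arm and all inter-arm connections are $E_1$-edges among the leaves — and (ii) the bookkeeping showing one cop suffices to sweep each pendant path while the others keep blocking $E_1[K]$. (Alternatively one could show $\trw((V,C_1))\le 2000$ and invoke $\cop{G}\le\trw(G)/2+1$, but the direct argument yields the stated constant cleanly.)
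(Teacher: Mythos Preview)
Your proof is correct and follows essentially the same approach as the paper: exploit that $(V,C_i)$ is the subdivided star $S$ with the small $E_i$-components glued on at the leaves, park all cops at $\infty$, identify the monochromatic component $K$ the robber is trapped near, and send one cop down each of the at most $2000$ arms ending in $K$. Two cosmetic remarks: the arm length is $2D+1$ edges, so the march takes $2D+1$ rounds rather than $2D$; and your step~3 (chasing the robber up a pendant path once every vertex of $K$ is occupied) is in fact vacuous---if the cops march synchronously the robber is always strictly below the advancing cop on its current arm and is therefore caught exactly when the cops reach the leaves, which is what the paper's terser argument relies on.
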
 
	\begin{proof}
		Recall that, for each $i\in \{0,1\}$, we have  $C_i=E_i\cup E_S$ where $E_i\cap E_S= \emptyset$. Thus, for $i\in \{0,1\}$, $E_i$ induces (in $X$) a set of disjoint connected components (subgraphs) $x_1, \dots, x_{k_i}$ each on at most $2001$ vertices, and $E_S$ induces a star whose edges are subdivided $2D+1$ times, and where each leaf is identified with a unique vertex in $V_X$. We now describe the strategy for $2001$ cops to catch the robber on $(V,C_i)$. 
		
		To begin we place all $2001$ cops at the centre $\infty$ of the star $S$. To avoid capture in the first round, the robber places themselves at a vertex $u$ which cannot be $\infty$ or a neighbour of $\infty$. Now, either $u$ is on an arm of $S$ that does not lead to any component induced by the clustered colouring, or one that does (in this case call it $x_j$). In the first case we simply send a single cop along the arm. In the second case we choose $|V(x_j)|\leq 2001$ many cops and move each along a unique arm with endpoint in $V(x_j)$ one step at a time toward this endpoint. Thus, in either case the robber is caught in at most $2D+1$ steps.\end{proof}
	
	We will use the following theorem which shows that expanders are resistant to vertex deletions.  
	
	\begin{theorem}[{Rephrasing of \cite[Theorem 2.1]{BagchiBCES06}}]\label{thm:prune} Let $G$ be an $n$-vertex $\alpha$-vertex-expander of maximum degree $\Delta$, and let $C > 1$ be a  fixed constant. After the removal of any set of up to $f =\alpha n/(4\Delta C^2)$ vertices from $V(G)$, the resulting graph contains a subgraph $H$ on at least $n -   f \cdot (C /\alpha)$ vertices with
		vertex-expansion at least $(1 - 1/C) \cdot \alpha$.
	\end{theorem}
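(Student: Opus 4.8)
The plan is to obtain Theorem~\ref{thm:prune} as a direct translation of \cite[Theorem~2.1]{BagchiBCES06}, which already asserts a fault-tolerance property of exactly this shape: deleting a small set of vertices from a bounded-degree vertex-expander leaves a large induced subgraph whose vertex-expansion is only mildly degraded. The work is essentially bookkeeping — I would restate their theorem in its original parameterisation and then write down the dictionary sending their fault budget, degree bound $\Delta$, base expansion $\alpha$, and degradation parameter to our $f = \alpha n/(4\Delta C^2)$, the surviving size $n - f\cdot(C/\alpha)$, and the surviving expansion $(1-1/C)\alpha$. The one genuinely fiddly point is confirming the dictionary is faithful: their notion of ``expansion'' might count boundary \emph{vertices} rather than the ratio $|\partial S|/|S|$, the size restriction $|S|\le n/2$ might be imposed in the pruned graph rather than in $G$, and the loss might be phrased multiplicatively; I would check each so that the inequalities displayed here are genuinely implied by theirs.

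Should the rephrasing require more than relabelling, the fallback is the standard iterative-pruning proof. Write $S$, with $|S|\le f$, for the deleted set, put $G_0 = G\setminus S$, and while the current graph $G_i$ contains a nonempty $A_i$ with $|A_i|\le |V(G_i)|/2$ whose vertex-boundary inside $G_i$ has size less than $(1-1/C)\alpha|A_i|$, pick such an $A_i$ and set $G_{i+1}=G_i\setminus A_i$. This halts because $|V(G_i)|$ strictly decreases, and the terminal graph $H$ has vertex-expansion at least $(1-1/C)\alpha$ by construction. To bound the total loss $\sum_i|A_i|=:|T|$, note that any vertex of $H$ adjacent in $G$ to some $A_j$ lies, at the moment $A_j$ is removed, in the external boundary of $A_j$ within $G_j$, which has size less than $(1-1/C)\alpha|A_j|$; summing over $j$, plus at most $|S|\le f$ further boundary vertices lying in $S$, gives $|\partial_G(T)|\le (1-1/C)\alpha|T| + f$. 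Since $G$ is itself an $\alpha$-vertex-expander, as long as $|T|\le n/2$ we also have $|\partial_G(T)|\ge \alpha|T|$, and combining the two yields $|T|\le Cf/\alpha$, so $|V(H)|\ge n - f - Cf/\alpha$, which is of the stated form.

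The step I expect to be delicate is not the pruning loop but justifying that the cumulative deletion never exceeds $n/2$ — which is what licenses applying the ambient $\alpha$-expansion to $T$ — and, relatedly, recovering the precise constants $4\Delta C^2$ and $C/\alpha$ appearing in the statement. Both are handled by observing that each $G_i$ differs from $G$ by only $O(f/\alpha)$ vertices, so $G_i$ still expands like $\alpha$ on sets of size up to $n/2$; consequently every violating set $A_i$ is forced to be small (of size $O(Cf/\alpha)\ll n/2$), whence the total pruned set stays well below $n/2$ and the bound $|T|\le Cf/\alpha$ goes through. If the published theorem already supplies exactly these constants then this is unnecessary and a one-line reference suffices, so I would attempt the translation first and reconstruct the argument only if the constants fail to line up.
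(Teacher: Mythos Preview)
Your proposal is correct and matches the paper's treatment: the paper does not prove Theorem~\ref{thm:prune} at all but simply states it as a rephrasing of \cite[Theorem~2.1]{BagchiBCES06} and uses it as a black box. Your plan to translate the parameterisation is therefore exactly what is required, and the fallback pruning argument you sketch --- while correct in outline and indeed the standard proof of such fault-tolerance statements --- goes beyond what the paper itself supplies.
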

	
	We now use the above results to show that $\alpha N/10^6$ cops cannot catch the robber in any small part of the expander $X$ within a cops-bane graph. 
	
	\begin{lemma}\label{lem:copsbane} For any constant $0<\alpha\leq 1$ and sufficiently large  natural number $N$, each multi-layer graph in the family $\mathcal{CB}(\alpha, N)$ has multi-layer cop number at least $\alpha N/10^6$. 
	\end{lemma}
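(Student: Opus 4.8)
The plan is to show that the robber evades any $c$ cops with $c\le \alpha N/10^6$, which yields $\mcop{\mlg}\ge \alpha N/10^6$. Since the robber layer is $R=E_1\cup E_2$, which has no edge incident to $V_S\setminus V_X$, the robber can (and will) keep to the expander $X=(V_X,E_1\cup E_2)$ for the whole game. The first and most important step is a ``cop slowness'' claim: in the cop layer $C_i=E_i\cup E_S$, any walk of length at most $4D+1$ that touches $V_X$ can only touch vertices of a single connected component of $(V_X,E_i)$, because passing from one such component to another forces the walk to go from a leaf up to the centre $\infty$ and back down another arm, a detour of length $2(2D+1)>4D+1$. As the $2$-edge-colouring has clustering at most $2000$ (Lemma~\ref{lem:hax}), it follows that within $4D+1$ moves a single cop can reach at most $2000$ vertices of $V_X$; hence the set $B$ of vertices of $V_X$ reachable by \emph{some} cop within $4D+1$ moves satisfies $|B|\le 2000c\le \alpha N/500$.

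Next I would apply Theorem~\ref{thm:prune} to $X$ (which has maximum degree $\Delta=3$) with $C=2$: since $|B|<\alpha N/48=\alpha N/(4\Delta C^2)$, deleting $B$ leaves a subgraph that contains an $(\alpha/2)$-vertex-expander $H$ on at least $N-N/24$ vertices, and by Lemma~\ref{lem:expdiam} this $H$ is connected with diameter at most $C_{\mathsf{diam}}(\alpha/2)\log N\le D$. This is exactly why the arms of $S$ were given length $2D+1$: whenever the cops have ``poisoned'' a part $B$ of $X$, the robber can still retreat into a fresh safe expander that it can cross in at most $D$ moves, whereas a cop needs more than $4D+1$ moves to chase it from one $(V_X,E_i)$-component to another through $S$.

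The robber's strategy runs in phases, maintaining the invariant that at the start of each phase (a cop turn) there is an $(\alpha/2)$-vertex-expander $H^\star\subseteq X$ on at least $23N/24$ vertices, the robber sits on some $r\in V(H^\star)$, and no cop is within $2D$ moves of $V(H^\star)$. The robber establishes this initially, being placed after the cops, by pruning $X$ around the radius-$(3D+1)$ balls of the cops. To maintain it, after the cops move the robber recomputes $B$ for radius $3D+1$ and the corresponding fresh safe expander $H^{\star\star}$; since $|V(H^\star)|,|V(H^{\star\star})|>N/2$ their vertex sets meet, and since $H^\star$ is connected of diameter $\le D$ the robber walks a shortest path inside $H^\star$ (at most $D$ moves) to a vertex of $V(H^\star)\cap V(H^{\star\star})$. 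The $2D$-move safety buffer around $V(H^\star)$ keeps the robber non-adjacent to every cop throughout this walk, and since $H^{\star\star}$ avoids the radius-$(3D+1)$ balls of the cops while only at most $D$ cop moves elapse during the walk, at the end of the phase no cop is within $2D$ moves of $V(H^{\star\star})$, so the invariant holds again with $H^\star:=H^{\star\star}$. Each phase lasts at least one round, so the robber survives indefinitely and the cops lose.

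The main obstacle is calibrating the distances in the invariant so that it reproduces itself: the robber must retreat to a region that remains safe long enough to let it retreat again, and this is feasible only because the arm length $2D+1$ — hence the cop-slowness bound $4D+1$ — comfortably exceeds the diameter $D$ shared by all pruned $(\alpha/2)$-expanders. The clustering bound of Lemma~\ref{lem:hax} is indispensable (otherwise a single cop could sweep a whole cop layer restricted to $X$), and Theorem~\ref{thm:prune} is what guarantees that a fresh safe expander still exists once a small part of $X$ has been poisoned by the cops.
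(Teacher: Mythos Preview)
Your argument is correct and follows essentially the same route as the paper: restrict the robber to the expander $X$, observe that each cop can threaten at most one $\le 2000$-vertex monochromatic component of $X$ in any window short compared to the arm length $2D+1$, apply Theorem~\ref{thm:prune} with $C=2$ to extract a large $(\alpha/2)$-expanding safe region of diameter $\le D$, and hop between consecutive safe regions via their guaranteed intersection. The only cosmetic difference is bookkeeping --- you track cops by radius-$(3D{+}1)$ balls and a $2D$ safety buffer, whereas the paper uses the cleaner ``blocked'' notion (reachable without passing through $\infty$) and a $D$ buffer --- but the invariant maintenance and the use of Lemma~\ref{lem:expdiam} and Theorem~\ref{thm:prune} are identical.
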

	
	\begin{proof} The robber will only move on the edges of the $\alpha$-vertex-expander $X$. 
		We say that a vertex $v\in V_X$ is \emph{blocked} if there exists some cop $c$ that can reach $v$ without visiting $\infty$, the vertex at the centre of the star.
		For clarity, if a cop is on $\infty$ then it does not block any vertex.
		Note that any cop can block at most $2001$ vertices, since the largest monochromatic component in the $2$-edge-colouring of $X$ has at most $2000$ edges.
		The rough idea for our robber will be to move between sets of unblocked vertices.
		For a set of (blocked) vertices $B$, if there exists a subset of $V_X\setminus B$ with size at least $N/2+1$ which induces a subgraph $H\subseteq G$ with diameter at most $D$ then we call $H$ \textit{safe} from $B$. By the lower bound on the number of vertices in such a safe subgraph, any two safe subgraphs must intersect in at least one vertex.  
		
		Now define integers $f$ and $k$ as follows 
		\begin{equation*}
			\label{eq:noofcops}k= \left\lfloor \frac{f}{2001}\right\rfloor   ,\qquad \text{where}\qquad f= \left\lfloor\frac{\alpha N}{48}\right\rfloor.
		\end{equation*}
		We will show that $k$ cops cannot catch a robber; we use $f$ to denote the maximum number of vertices of $X$ that $k$ cops can block.
		
		\begin{claim}\label{clm:safegraph}
			For any set $B\subseteq V$ satisfying $|B|\leq f$ there exists a subgraph $H \subseteq G$ which is safe from $B$. 
		\end{claim}
		
		\begin{poc}
			
			Observe that, since $X$ is $3$-regular, our choice of $f$ corresponds to setting $C=2$ in Theorem \ref{thm:prune} and choosing at most $f$ vertices adversarially. Hence, by Theorem \ref{thm:prune}, removing the set $B$ from $V(X)$ leaves a connected subgraph $H$ on at least 
			\begin{equation*} N - f\cdot\frac{ 2 }{\alpha} \geq  N - \frac{\alpha N}{48}\cdot\frac{ 2 }{\alpha} > \frac{N}{2} \end{equation*} vertices, and vertex-expansion at least $\alpha/2>0$. Hence by Lemma \ref{lem:expdiam} the diameter of $H$ is at most $C_{\mathsf{diam}}(\alpha/2)\cdot \log N =D$, thus $H$ is safe from $B$. 
		\end{poc}
		
		The robber's strategy will be to occupy a vertex in $V_X$ that is safe from the current set of blocked vertices at each step. It does this by trying to anticipate where the cops are heading next (the long paths in $S$ give them such foresight), computing a safe subgraph based on this prediction that will remain safe for at least as long as it takes to reach this subgraph, compute a new safe subgraph (if the current one became threatened), and then move to the new safe subgraph. Claim \ref{clm:safegraph} guarantees that the required safe subgraphs exist in each step. 
		
		We will say that the \emph{cop location restrictions} hold, if, on a given turn the robber is on some vertex within a subgraph $H'$ of $X$ with at least $N/2+1$ vertices such that $H'$ has diameter at most $D$, and no cop is within $D$ turns of reaching any vertex in $H'$.
		
		At the start of the game of cops and robbers, the cops are placed on some set of vertices, and let $B$ be the set of vertices that are blocked by these cops.
		Then by Claim~\ref{clm:safegraph} there is subgraph $H$ of $X$ that is safe from $B$, so the robber picks an arbitrary vertex in $H$ to start on.
		We easily see that the cop location restrictions are satisfied by this, and will be satisfied after the cops have one turn to move.
		
		Now, assume that it is the robbers turn, and that the cop location restrictions hold. Let $H'$ denote the subgraph of $X$ from the cop location restrictions.
		Let $B$ be the set of vertices that are currently blocked.
		By Claim~\ref{clm:safegraph} there is a subgraph $H$ of $X$ that is safe from $B$.
		As $H$ and $H'$ both have at least $N/2 + 1$ vertices, there must be at least one vertex $v\in V_X$ such that $v$ is in both $H$ and $H'$.
		Additionally, the diameter of $H'$ is at most $D$, so the robber can move along a shortest path in $H'$ to $v$ in at most $D$ turns without leaving $H'$.
		As the cop restrictions held before these $D$ turns, the robber cannot be caught during these moves.
		As $v$ is also in $H$, and as there was no cop that was blocking any vertex in $H$ before the robber started these moves,
		any cop that is now blocking some vertex in $H$ must have had to move to (or start on) $\infty$, and then move from $\infty$ towards some vertex in the expander.
		However, the distance from $\infty$ to any vertex in the expander is $2D+1$, so 
		after $D$ moves no cop can be within a distance of $D$ to any vertex in $H$.
		The cop location restrictions are therefore satisfied after these $D$ turns,
		and the robber can repeat this process indefinitely to win.
	\end{proof} 
	\section{Conclusion and Open Problems} \label{sec:conclusion} 
	
	We studied the game of cops and robbers on multi-layer graphs via several different approaches, including concrete strategies for certain graph, the construction of counter-intuitive examples, algorithmic and hardness results, and the use of probabilistic methods and expanders for extremal constructions.  We find that the multi-layer cop number cannot be bound from above or below by (non-constant) functions of the cop numbers of the individual layers. We also introduce an extremal variant of the problem and obtain asymptotically tight bounds for cliques and dense binomial random graphs (extending some tools from the single layer case along the way). We also find that a naive transfer of Meyniel's conjecture to the multi-layer setting is not true: there are $n$-vertex multi-layer graphs with two cop layers which require $\Omega( n / \log n)$ cops to police. Algorithmically, we find that even if each layer is one tree with some isolated vertices, the free layer choice variant of the problem remains NP-hard. Positively, we find that the problem can be resolved by an algorithm that is FPT in the number of cops and layers if the layer the robber resides in is a tree.

	We are hopeful that our contribution will spark future work in multi-layer variants of cops and robbers, and suggest a number of possible open questions:

	There are some frequently used ideas from single-layer cops and robbers which we did not utilise. For example, we have no useful notion of a corner, or a retract, nor dismantleability - it is possible that such tools can be extended to the multi-layer setting.
	
	Some progress was made on the parameterised complexity of our problems, but we have only considered a limited set of parameters and have not explored any parameter that constrains the nature of interaction between the layers. For example, if we require that the layers are very similar alongside other restrictions does that impact the computational complexity of our problems?
	
	Single-layer cops and robbers has been very successful as a tool for defining useful graph parameters of simple graphs. We ask whether multi-layer cops and robbers could be used to define algorithmically useful graph parameters.

	While we showed that a naive adaptation of Meyniel's conjecture to our multi-layer setting does not hold, it is still possible that $o(|V|)$ cops are sufficient for a bounded number of connected layers.  We have shown this for a special case of growing minimum degree: is it true in general?

	Finally, while we introduced a particular notion of multi-layer dominating set for proving upper bounds (inspired by similar ideas in single-layer cops and robbers), this multi-layer graph characteristic may also be interesting in its own right; in particular for algorithms for other problems on multi-layer graphs.  
	
	\section*{Acknowledgements}
	We thank David R.~Wood for a brief but informative discussion on clustered colourings. All authors of this work were supported by EPSRC project EP/T004878/1: Multilayer Algorithmics to Leverage Graph Structure at the University of Glasgow.

\end{document}